\newtheorem{thm}{Theorem}[section]
\newtheorem{lemma}[thm]{Lemma}
\newtheorem{cor}[thm]{Corollary}
\theoremstyle{definition}
\newtheorem{defn}[thm]{Definition}
\newtheorem{remark}[thm]{Remark}
\newtheorem{prop}[thm]{Proposition}
\newcommand{\N}{\mathbb{N}}
\newcommand{\Z}{\mathbb{Z}}
\newcommand{\Q}{\mathbb{Q}}
\newcommand{\R}{\mathbb{R}}
\newcommand{\eps}{\varepsilon}
\newcommand{\pip}{\varphi}
\newcommand{\rad}{\text{rad}}
\newcommand{\Ead}{\mathcal{E}_a^Z}
\newcommand{\Eadp}{\mathcal{E}_a^{Z_+}}
\newcommand{\Pa}{\mathcal{P}_a}
\newcommand{\bbR}{\mathbb{R}}
\newcommand{\bbN}{\mathbb{N}}
\def\vint_#1{\mathchoice
          {\mathop{\vrule width 6pt height 3 pt depth -2.5pt
                  \kern -8pt \intop}\nolimits_{#1}}%
          {\mathop{\vrule width 5pt height 3 pt depth -2.6pt
                  \kern -6pt \intop}\nolimits_{#1}}%
          {\mathop{\vrule width 5pt height 3 pt depth -2.6pt
                  \kern -6pt \intop}\nolimits_{#1}}%
          {\mathop{\vrule width 5pt height 3 pt depth -2.6pt
                  \kern -6pt \intop}\nolimits_{#1}}}
\DeclareMathOperator{\Mod}{\text{Mod}}
\DeclareMathOperator{\Lip}{\rm Lip}
\newcommand{\distz}{\text{dist}_Z}
\def\XXint#1#2#3{{\setbox0=\hbox{$#1{#2#3}{\int}$ }
\vcenter{\hbox{$#2#3$ }}\kern-.6\wd0}}
\setlist[enumerate]{label=(\arabic*)} 
\numberwithin{equation}{section}
\newcommand{\co}{\colon}
\title{Regularity of Solutions to the Fractional Cheeger-Laplacian on Domains in Metric Spaces of Bounded Geometry}
\author{Sylvester Eriksson-Bique, Gianmarco Giovannardi, Riikka Korte, \\
Nageswari Shanmugalingam, and Gareth Speight}
\begin{document}
\maketitle

\begin{abstract}
We study existence, uniqueness, and regularity properties of the Dirichlet problem related to fractional Dirichlet energy
minimizers in a complete doubling metric measure space $(X,d_X,\mu_X)$ 
satisfying a $2$-Poincar\'e inequality. 
Given a bounded domain $\Omega\subset X$ with $\mu_X(X\setminus\Omega)>0$, and a function $f$ in the Besov class
$B^\theta_{2,2}(X)\cap L^2(X)$, we study the problem of finding
a function $u\in B^\theta_{2,2}(X)$ such that $u=f$ in $X\setminus\Omega$ and 
$\mathcal{E}_\theta(u,u)\le \mathcal{E}_\theta(h,h)$ whenever $h\in B^\theta_{2,2}(X)$ with $h=f$ in $X\setminus\Omega$.
We show that such a solution always exists and that this solution is unique.
We also show that the solution is locally H\"older continuous on $\Omega$, and satisfies a non-local
maximum and strong maximum principle. Part of the results in this paper extend the 
work of Caffarelli and Silvestre in the Euclidean setting and Franchi and Ferrari in Carnot groups.
\end{abstract}

\bigskip
\noindent
{\small \emph{Key words and phrases}: Fractional Laplacian, Dirichlet problem, existence and uniqueness, 
strong maximum principle, Besov space, upper gradient, metric measure
space, Poincar\'e inequality, doubling measure, traces and extensions, Newton-Sobolev spaces.
}

\medskip
\noindent
{\small Mathematics Subject Classification (2020): Primary: 31E05, Secondary: 35A15, 50C25, 35J70.}

\tableofcontents

\section{Introduction}\label{Sect:Intro}

The development of analysis on metric measure spaces in recent decades has provided a fruitful study of upper gradient
$p$-energy minimizers in complete metric measure spaces equipped with a doubling measure supporting a $p$-Poincar\'e inequality.
Here the notion of upper gradient is the metric space generalization of the norm of the derivative from the seminal work of Heinonen and Koskela~\cite{HeiK}.
An application of the subsequent work of Cheeger~\cite{Ch} gave a differential structure on such  a metric measure space
with respect to which every Lipschitz function enjoys a first order Taylor approximation property, and the differential structure
can be equipped with a measurable inner product so that the induced norm on the differential of a Lipschitz function is comparable
to the minimal $p$-weak upper gradient. Therefore, one can, instead of minimizing the upper gradient energy, minimize the
energy given by integrating the $p$-th power of the norm of the differential. Such energy minimizers are 
upper gradient $p$-energy
quasiminimizers in the sense of~\cite{KiSh}, and hence have regularity properties such as local H\"older continuity and the Harnack
inequality that the upper gradient $p$-energy minimizers also satisfy. 

In considering the Cheeger differential structure $D_X$ on the metric measure space $(X,d_X,\mu_X)$, 
thanks to the inner product on this structure we have an induced Dirichlet form (corresponding to $p=2$)
in the sense of~\cite{FOT}. Thus the theory of Dirichlet forms yields a Cheeger Laplacian operator $\Delta_X$.
The fractional Laplace operator $(-\Delta_X)^{\theta}$, defined via spectral theory by using $\Delta_X$ and the 
associated Dirichlet forms, is a
non-local operator on $X$. The goal of this paper is to study existence and regularity properties of the solution 
to the non-local equation $(-\Delta_X)^\theta u=0$ on a bounded domain $\Omega\subset X$ with 
Dirichlet data $u=f$ on $X\setminus\Omega$ for $f$ in the suitable function class on $X$. The suitable function class here
is the inhomogeneous Besov class $B^\theta_{2,2}(X)\cap L^2(X)$. The permissible range of $\theta$ is $0<\theta<1$.

To achieve the goals described above, we use the line of investigation implemented by Caffarelli and Silvestre in~\cite{CS}.
Additional tools and structures from~\cite{BGMN, BLS, GKS,G, KiSh} are also key components in our proofs. The following are the
main results of this paper. The first theorem below establishes the existence of a solution to the fractional Laplacian problem with
given Dirichlet data. Here, with $\Delta_X$ a choice of 
the Cheeger Laplacian on $X$, and $0<\theta<1$, we set 
\begin{equation}\label{eq:frac-Dirich-1}
\mathcal{E}_{\theta}(f,f):=\int_X ((-\Delta_X)^{\theta/2} f)^2\, d\mu_X.
\end{equation}

\begin{thm}\label{thm:minimization} 
Let $f \in B^\theta_{2,2}(X)\cap L^2(X)$, and $\Omega$ be a bounded domain in $X$
with $\mu_X(X\setminus\Omega)>0$.
Then there is a unique $u \in B^\theta_{2,2}(X)$ 
with $u=f$ in $X\setminus\Omega$
such that whenever $h\in B^\theta_{2,2}(X)$ with $h=f$ in $X\setminus\Omega$, we have
\begin{equation}\label{eq:fract-Laplace}
\mathcal{E}_{\theta}(u,u)\leq \mathcal{E}_{\theta}(h,h).
\end{equation}
Equivalently, we have 
\[
\mathcal{E}_\theta(u,h)=0
\]
whenever $h\in B^\theta_{2,2}(X)$ such that
$h$ has compact support in $\Omega$.
\end{thm}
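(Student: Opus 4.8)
The plan is to recast \eqref{eq:fract-Laplace} as the minimization of a nonnegative quadratic functional over a closed affine subspace of a Hilbert space, so that existence, uniqueness and the Euler--Lagrange characterization all follow from the Riesz representation theorem; the only genuinely nontrivial ingredient is a fractional Poincar\'e inequality. First I would record the Hilbert space structure. By the spectral calculus, $\mathcal{E}_\theta(u,v):=\int_X\big((-\Delta_X)^{\theta/2}u\big)\big((-\Delta_X)^{\theta/2}v\big)\,d\mu_X$ is a symmetric, bilinear, nonnegative form, and the inhomogeneous Besov norm satisfies $\|u\|_{B^\theta_{2,2}(X)}^2\simeq\|u\|_{L^2(X)}^2+\mathcal{E}_\theta(u,u)$, so that $B^\theta_{2,2}(X)$ is a Hilbert space for the inner product $\langle u,v\rangle:=\int_X uv\,d\mu_X+\mathcal{E}_\theta(u,v)$. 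Put $V:=\{v\in B^\theta_{2,2}(X):v=0\ \mu_X\text{-a.e. in }X\setminus\Omega\}$ and $\mathcal{A}:=f+V$. Since $B^\theta_{2,2}(X)\hookrightarrow L^2(X)$ continuously, convergence in $B^\theta_{2,2}(X)$ forces $\mu_X$-a.e. convergence along a subsequence, hence $V$ is a closed subspace and $\mathcal{A}$ is a nonempty closed affine subspace whose elements are precisely the admissible competitors in \eqref{eq:fract-Laplace}.

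The heart of the argument is coercivity of $\mathcal{E}_\theta$ on $V$. I would prove, or quote from the earlier sections and from \cite{GKS,BLS}, a \emph{fractional Poincar\'e inequality}
\[
\|v\|_{L^2(X)}\le C\,\mathcal{E}_\theta(v,v)^{1/2}\qquad\text{for all }v\in V,
\]
with $C$ depending only on $\diam(\Omega)$ and on the doubling and $2$-Poincar\'e data of $(X,d_X,\mu_X)$. This should follow either from the Besov--Sobolev embedding available in this setting together with H\"older's inequality and $\mu_X(\Omega)<\infty$ (with a scaling or chaining argument to keep the constant uniform), or from the comparison of $\mathcal{E}_\theta$ with the Besov seminorm combined with a Poincar\'e inequality for functions vanishing on a set of positive measure --- this is where $\mu_X(X\setminus\Omega)>0$ is used. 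Granting it, $\mathcal{E}_\theta|_{V\times V}$ is an inner product on $V$ whose norm is equivalent to $\|\cdot\|_{B^\theta_{2,2}(X)}$ on $V$, so $(V,\mathcal{E}_\theta)$ is itself a Hilbert space. I expect this Poincar\'e inequality, rather than the soft functional analysis, to be the main obstacle.

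With coercivity in hand, existence and uniqueness are immediate. The functional $v\mapsto\mathcal{E}_\theta(v,f)$ is linear on $V$ and, by the Cauchy--Schwarz inequality for the nonnegative form $\mathcal{E}_\theta$ together with $\mathcal{E}_\theta(f,f)<\infty$, bounded on $(V,\mathcal{E}_\theta)$; the Riesz representation theorem yields a unique $g\in V$ with $\mathcal{E}_\theta(v,f)=\mathcal{E}_\theta(v,g)$ for all $v\in V$. Completing the square,
\[
\mathcal{E}_\theta(f+v,f+v)=\mathcal{E}_\theta(v+g,v+g)+\mathcal{E}_\theta(f,f)-\mathcal{E}_\theta(g,g)\qquad(v\in V),
\]
so the right-hand side of \eqref{eq:fract-Laplace}, minimized over $\mathcal{A}=f+V$, attains its minimum exactly at $v=-g$; thus $u:=f-g$ is the unique minimizer. (Alternatively one can run the direct method: a minimizing sequence is bounded in $B^\theta_{2,2}(X)$ by the Poincar\'e inequality, its weak limit remains in the weakly closed set $\mathcal{A}$, $\mathcal{E}_\theta$ is convex and $L^2$-lower semicontinuous hence weakly lower semicontinuous, and uniqueness follows from the parallelogram law.)

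Finally, the identity above --- or differentiating $t\mapsto\mathcal{E}_\theta(u+tv,u+tv)$ at $t=0$ --- shows that $u\in\mathcal{A}$ satisfies \eqref{eq:fract-Laplace} if and only if $\mathcal{E}_\theta(u,v)=0$ for every $v\in V$. Since every $h\in B^\theta_{2,2}(X)$ with compact support in $\Omega$ lies in $V$, this yields the stated equivalence in one direction at once. For the converse one uses that such compactly supported functions are dense in $V$ in the $B^\theta_{2,2}(X)$-norm (equivalently, in the $\mathcal{E}_\theta$-norm), so that $\mathcal{E}_\theta(u,h)=0$ for all compactly supported test functions $h$ already forces $\mathcal{E}_\theta(u,v)=0$ for all $v\in V$; combined with the uniqueness just proved, this identifies $u$ as the minimizer. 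This density --- i.e. that $V$ coincides with the closure of the functions with compact support in $\Omega$ --- is the one remaining point that requires care, and I would obtain it from the corresponding density statements for Newton--Sobolev spaces on domains whose complement has positive measure.
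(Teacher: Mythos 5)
Your proposal is correct in outline but takes a genuinely different route from the paper, and its correctness hinges on two auxiliary facts that you flag but do not establish.

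The paper runs the direct method in a way that sidesteps any fractional Poincar\'e inequality entirely. Given a minimizing sequence $h_k\in\mathcal{K}_f$, they observe that the difference quotients $v_k(x,w):=h_k(x)-h_k(w)$ form a bounded sequence in $L^2(X\times X,\nu)$ with $d\nu(x,w)=d(x,w)^{-2\theta}\mu_X(B(x,d(x,w)))^{-1}\,d\mu_X(x)\,d\mu_X(w)$. Weak compactness in that Hilbert space (followed by Mazur's lemma and a.e.~convergence along a subsequence) yields a limit $v_\infty$, from which they \emph{reconstruct} a candidate minimizer $h_\infty(x)=v_\infty(x,w)+f(w)$ by choosing a good reference point $w\in X\setminus\Omega$ --- this is exactly where $\mu_X(X\setminus\Omega)>0$ enters, via Fubini. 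Lower semicontinuity of $\mathcal{E}_\theta$ then closes the argument. No coercivity of $\mathcal{E}_\theta$ on the tangent space $V$ is ever needed, because compactness is extracted upstairs in $L^2(\nu)$ rather than in $L^2(X)$.

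Your approach replaces this with the standard Lax--Milgram / projection picture: make $V:=\{v\in B^\theta_{2,2}(X)\cap L^2(X):v=0\text{ a.e.\ in }X\setminus\Omega\}$ into a Hilbert space under $\mathcal{E}_\theta$, then project. This is cleaner and makes uniqueness and the Euler--Lagrange characterization fall out immediately from the parallelogram law and density, both of which the paper treats rather lightly (uniqueness is not explicitly argued in their proof, and the Euler--Lagrange equivalence is delegated to the remark following Definition~\ref{def:fract-sol}). But the price is the fractional Poincar\'e inequality $\|v\|_{L^2(X)}\lesssim\mathcal{E}_\theta(v,v)^{1/2}$ on $V$, which you correctly identify as the crux and leave unproven. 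It is plausible here --- one can likely prove it by integrating $|v(x)|^2=|v(x)-v(w)|^2$ over $w$ ranging in $B\setminus\Omega$ for a large enough ball $B$ and using the comparability of $\mathcal{E}_\theta$ with the Besov seminorm from Proposition~\ref{prop:besovenergy}, or via the trace/extension machinery of Section~\ref{Sect:Traces} together with the Maz'ya inequality~\eqref{eq:Mazya} on $Z$ --- but as it stands it is a missing lemma that the paper never has to prove because its compactness argument operates at the level of difference quotients. The same goes for your second flagged gap, the density of compactly supported functions in $V$, which is needed for the ``if'' direction of the Euler--Lagrange equivalence; the paper simply asserts this equivalence. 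If you supply the fractional Poincar\'e inequality and the density statement, your route is complete and arguably tidier; if you want to avoid both, you should follow the paper's reconstruction-from-difference-quotients device.
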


In the Euclidean setting the existence and uniqueness results for the fractional Dirichlet problem  were obtained 
in~\cite{FKV, HJ,RO}.
The notion related to~\eqref{eq:fract-Laplace} is given in the next section, see Definition~\ref{def:fract-sol} below. 
The proof of the above theorem also shows that if $f$ is in $B^\theta_{2,2}(X)$ but not necessarily in 
$L^2(X)$, then the solution still exists provided we can make sense of $\mathcal{E}_\theta(f,f)$. Indeed,
if $f$ is a non-zero constant, or a perturbation of a nonzero constant by a 
function in $B^\theta_{2,2}(X)\cap L^2(X)$, then $\mathcal{E}_\theta(f,f)$ should make sense.

The next theorem discusses the regularity properties of the solution. 

\begin{thm}\label{thm:mainthm1} 
Let $0<\theta<1$.
Suppose $(X,d_X,\mu_X)$ is a complete and doubling metric measure space that satisfies a
$2$-Poincar\'e inequality, and that $\Omega\subset X$ is a bounded domain with $\mu_X(X\setminus\Omega)>0$. 
Suppose further that $f\in B^\theta_{2,2}(X)\cap L^2(X)$ is a solution to~\eqref{eq:fract-Laplace}.
Then $f$ is locally H\"older continuous on $\Omega$. Moreover, if $f\ge 0$ on $X$, then $u$ satisfies a 
Harnack inequality on balls $B\subset X$ for which $2B\subset\Omega$.
\end{thm}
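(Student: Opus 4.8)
The plan is to realize the solution (call it $u:=f$, the given solution to~\eqref{eq:fract-Laplace}) through a Caffarelli--Silvestre type extension, and thereby reduce the regularity question to interior estimates for a degenerate elliptic equation on a weighted doubling Poincar\'e space. Put $a:=1-2\theta$, so that $-1<a<1$ precisely because $0<\theta<1$, and consider the product space $X\times[0,\infty)$. Given $u\in B^\theta_{2,2}(X)$, let $U$ on $X\times[0,\infty)$ be its energy extension, that is, the minimizer of
\[
\int_{X\times(0,\infty)} y^{a}\,g_U^2\,d(\mu_X\times\mathcal{L}^1)
\]
over all functions with trace $u$ on $X\times\{0\}$, where $g_U$ denotes the minimal $2$-weak upper gradient of $U$ in the product space $X\times\bbR$. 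By the trace and extension theory for Besov spaces used above (cf.\ \cite{BGMN,BLS,GKS}), this extension exists, its energy is comparable to $\mathcal{E}_\theta(u,u)$, and $U$ is a weak solution of $\divv(y^{a}\nabla U)=0$ on $X\times(0,\infty)$.

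The next step is to translate the solution property of $u$. The Euler--Lagrange identity $\mathcal{E}_\theta(u,h)=0$ for every $h\in B^\theta_{2,2}(X)$ with compact support in $\Omega$ becomes, after passing to extensions, the statement that $U$ is a weak solution of $\divv(y^{a}\nabla U)=0$ on the half-cylinder $\Omega\times[0,\infty)$ with vanishing weighted conormal derivative along $\Omega\times\{0\}$; this is the metric-space incarnation of the Caffarelli--Silvestre correspondence between $(-\Delta_X)^\theta u$ and the weighted normal derivative of $U$ at $y=0$. Reflecting evenly across $y=0$, i.e.\ setting $\widetilde U(x,y):=U(x,|y|)$, one obtains a weak solution of $\divv(|y|^{a}\nabla\widetilde U)=0$ on the open set $\Omega\times\bbR$, with respect to the measure $|y|^{a}\,d(\mu_X\times\mathcal{L}^1)$.

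The purpose of the reflection is that $|y|^{a}$ with $|a|<1$ is a Muckenhoupt $A_2$ weight on $\bbR$, hence, depending only on the $y$-coordinate, also an $A_2$ weight on $X\times\bbR$. Since $(X,d_X,\mu_X)$ is complete, doubling and supports a $2$-Poincar\'e inequality, so does the product $X\times\bbR$ with $\mu_X\times\mathcal{L}^1$; and weighting a complete doubling $2$-Poincar\'e space by an $A_2$ weight again produces a complete doubling $2$-Poincar\'e space. Thus $\widetilde U$ is a weak solution of a degenerate elliptic equation of the kind covered by De Giorgi--Nash--Moser theory on complete doubling $2$-Poincar\'e spaces---as developed for $2$-quasiminimizers in \cite{KiSh}, and classically in the weighted Euclidean setting by Fabes, Kenig and Serapioni---hence $\widetilde U$ is locally bounded and locally H\"older continuous on $\Omega\times\bbR$. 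Restricting to the slice $y=0$ shows that $u=U(\cdot,0)$ is locally H\"older continuous on $\Omega$.

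For the Harnack inequality, suppose in addition $f\ge 0$ on $X$. Then $u=f\ge 0$ on $X\setminus\Omega$, and truncating the competitor in the extension energy shows $U\ge 0$ on $X\times[0,\infty)$, so $\widetilde U\ge 0$ on $\Omega\times\bbR$. Applying the Harnack inequality for nonnegative weak solutions on metric balls of $X\times\bbR$ that are centered at points of $X\times\{0\}$ and contained in $\Omega\times\bbR$, and then restricting to the slice $y=0$, yields the Harnack inequality for $u$ on balls $B\subset X$ with $2B\subset\Omega$. The principal obstacle is the first step---establishing rigorously the identification of $\mathcal{E}_\theta(u,u)$ with the weighted Dirichlet energy of $U$ and of the Euler--Lagrange condition with the Neumann problem for $\divv(y^{a}\nabla U)=0$, together with the fact that weighting a complete doubling $2$-Poincar\'e space by an $A_2$ weight again yields such a space; once these are in place, the stated regularity follows directly from \cite{KiSh}.
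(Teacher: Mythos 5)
Your proposal follows essentially the same Caffarelli--Silvestre extension strategy as the paper: extend $u=f$ to a weighted half-space $Z_+=X\times(0,\infty)$ with weight $|y|^a$, $a=1-2\theta$, convert the fractional Euler--Lagrange condition into a Neumann-type condition on $\Omega\times\{0\}$, reflect evenly to obtain a solution of the degenerate local equation on $\Omega\times\mathbb{R}$, and invoke quasiminimizer regularity from \cite{KiSh} on the slice $y=0$. The reflection, the use of $|y|^a$ as an $A_2$ weight to preserve doubling and the $2$-Poincar\'e inequality on the product (cf.\ Remark~\ref{remk:Cartesian-PI}), and the restriction to the slice all match the paper.

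However, as you yourself flag, the proof as written leaves its load-bearing step unverified: that the Euler--Lagrange identity $\mathcal{E}_\theta(u,h)=0$ for compactly supported $h$ translates into the statement that the reflected extension is Cheeger harmonic on $U_\Omega=Z_+\cup Z_-\cup(\Omega\times\{0\})$. This is not a formality; it is where nearly all of the paper's technical work lies. The paper does \emph{not} take the abstract energy-minimizing extension as you do, but instead uses the explicit Poisson-type extension $\Pi_a f$ from \eqref{eq:explicit} (the heat-kernel formula of \cite{BLS,BGMN}), because it needs the quantitative asymptotic
\[
\lim_{y\to 0^+}-\frac{2^{2\theta-1}\Gamma(\theta)}{\Gamma(1-\theta)}\,y^a\,\partial_y\Pi_a f=(-\Delta_X)^\theta f
\]
(see \cite[Lemma~3.1]{BLS}, used in Proposition~\ref{prop:trace-energy} and Theorem~\ref{thm:cheegerharm2}) to carry out the integration by parts across $y=0$ and show that the boundary term vanishes precisely when $f$ solves \eqref{eq:fract-Laplace}. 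One also needs Proposition~\ref{prop:besovenergy} to identify $B^\theta_{2,2}(X)\cap L^2(X)$ with $\mathcal{F}_\theta$, Proposition~\ref{prop-Trace} for the trace characterization of $D^{1,2}(Z_+)$, and Lemma~\ref{lem:propertiesextension}/Proposition~\ref{prop:trace-energy} to verify that $\Pi_a f$ actually lies in $D^{1,2}(Z_+)$ with energy a constant multiple of $\mathcal{E}_\theta(f,f)$. If you want to proceed with the abstract energy-minimizer $U$ rather than the explicit $\Pi_a f$, you would still need to establish the same conormal-derivative identity, which is no easier; and you would also need a uniqueness argument (as in Theorem~\ref{thm:cheegerminimizer}) to know the two extensions coincide, which is the route the paper takes in the proof of Theorem~\ref{thm:main3}. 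In short: the strategy is right, the architecture matches the paper, but the ``principal obstacle'' you name is the theorem, and resolving it requires the heat-kernel machinery of Sections~2.5--2.7 and~6 rather than soft energy-minimization arguments.
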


The pioneering work related to this problem in the setting of Euclidean domains is due to Cafarelli and Silvestre \cite{CS}.
They proved a Harnack inequality for functions $u:\R^n \to [0,\infty)$ which satisfy
\[
(-\Delta)^\theta u(x)= 0, \qquad x \in \Omega
\]
for a given Euclidean domain 
$\Omega\subset\R^n$. To do this they first consider extensions of Besov functions on $\R^n$ to Sobolev functions
on a suitably weighted $\R^n\times\R$, solve a corresponding Dirichlet problem for the weighted analogue of the
standard Laplacian on $\R^n\times(0,\infty)$ with boundary
data the suitable Besov function on $\R^n$, and then study the boundary behavior on $\partial(\R^n\times(0,\infty))$) 
of such a solution when the boundary datum itself is a Besov energy minimizer.
They show that in this case, the solution on $\Omega_0:=\R^n\times(0,\infty)$
has an extension to all of $\Omega\times\R$ that is (weighted) harmonic on $\Omega\times\R$. Then the 
knowledge that the harmonic functions are locally H\"older continuous and satisfy a Harnack inequality can be 
used to verify the corresponding property for the Besov energy minimizer on $\Omega$. This approach was 
extended in \cite{FF} to Carnot groups and in~\cite{BGMN} to the parabolic setting. 
For a related non-local problem in the manifold setting, see~\cite{CSYG}.
We follow the prescription of~\cite{CS} and consider
the metric space $Z=X\times\R$, equipped with the metric
\[
d_Z((x_1,y_1),(x_2,y_2)):=\sqrt{d_X(x_1,x_2)^2+(y_1-y_2)^2},
\]
and, for $a=1-2\theta$, the measure $\mu_a$ given by $d\mu_a(x,y)=|y|^a\, dy\, d\mu_X(x)$. 
The next section describes the setting of this paper in greater detail.

The solution obtained in the proof of Theorem~\ref{thm:mainthm1} was via an 
extension of the function $f\in B^\theta_{2,2}(X)\cap L^2(X)$ from
$X\times\{0\}$ to $Z_+:=X\times(0,\infty)$ using a modified heat extension given in~\cite{BLS}. 
The final main theorem of this paper is that one can achieve this extension also by solving the Dirichlet problem
on $U_\Omega:=Z_+\cup Z_-\cup (\Omega\times\{0\})$ where $Z_-=X\times(-\infty,0)$.

\begin{thm}\label{thm:main3}
With the hypotheses given in Theorem~\ref{thm:mainthm1}, a function $f\in B^\theta_{2,2}(X)\cap L^2(X)$ is a solution
to~\eqref{eq:fract-Laplace} if and only if $f$ is the trace on $X\times\{0\}=\partial Z_+$ of the solution,
from the homogeneous Newton-Sobolev class $D^{1,2}(Z)$, to the Dirichlet
problem related to the equation $\Delta_a u=0$ on $U_\Omega$ with boundary data $f$. Moreover, such a solution
is unique in that if $h$ is another solution from $D^{1,2}(Z)$ with $h=f$ in $\partial U_\Omega$, then $h=f$ in $X$. 
Furthermore, a maximum principle and a strong maximum principle hold:
\[
\text{esssup}_{x\in\Omega}f(x)\le \text{esssup}_{w\in X\setminus\Omega}f(w),
\]
and if there is $x_0\in\Omega$ such that $\text{esssup}_{x\in X}f(x)=f(x_0)$, then $f$ is constant on $X$. 
\end{thm}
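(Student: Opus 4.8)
The plan is to deduce everything from the Caffarelli--Silvestre type extension characterization of $\mathcal E_\theta$ already used to prove Theorem~\ref{thm:mainthm1}, combined with an even reflection across $X\times\{0\}$ and the uniqueness in Theorem~\ref{thm:minimization}. Throughout I would use that $a=1-2\theta\in(-1,1)$, so $|y|^a$ is a Muckenhoupt $A_2$ weight and $(Z,d_Z,\mu_a)$ is complete, doubling, and supports a $2$-Poincar\'e inequality; hence $\Delta_a$-harmonic functions on open subsets of $Z$ are locally H\"older continuous and satisfy a Harnack inequality, and the Dirichlet problem for $\Delta_a$ on $U_\Omega$ with datum $f$ is precisely the minimization of $\Ead(\cdot,\cdot)$ over the class $\mathcal A$ of $v\in D^{1,2}(Z)$ with $\Tr_{X\times\{0\}}v=f$ on $X\setminus\Omega$ (note $\mu_a(X\times\{0\})=0$). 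I would also use the extension correspondence from the proof of Theorem~\ref{thm:mainthm1}: for $g\in B^\theta_{2,2}(X)$ one has $c_\theta\,\mathcal E_\theta(g,g)=\inf\{\Eadp(V,V):V\in D^{1,2}(Z_+),\ \Tr V=g\}$, the infimum attained uniquely by the modified heat extension of~\cite{BLS} (which I denote $Eg$, and which is $\Delta_a$-harmonic on $Z_+$), together with boundedness of $\Tr\colon D^{1,2}(Z_+)\to B^\theta_{2,2}(X)$.

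For the equivalence, let $u$ be the unique fractional minimizer with datum $f|_{X\setminus\Omega}$ from Theorem~\ref{thm:minimization}. First I would observe that the even reflection $\widetilde{Eu}(x,y):=Eu(x,|y|)$ lies in $D^{1,2}(Z)$ with $\Ead(\widetilde{Eu},\widetilde{Eu})=2\,\Eadp(Eu,Eu)=2c_\theta\,\mathcal E_\theta(u,u)$ and trace $u$, so $\widetilde{Eu}\in\mathcal A$. Then, for an arbitrary $v\in\mathcal A$, I would pass to its even part $v_e(x,y):=\tfrac12\big(v(x,y)+v(x,-y)\big)$, which again lies in $\mathcal A$, has the same trace $g:=\Tr v\in B^\theta_{2,2}(X)$, and satisfies $\Ead(v_e,v_e)\le\Ead(v,v)$ by convexity of $t\mapsto t^2$ and the $y\mapsto -y$ symmetry of $\mu_a$; this gives
\[
\Ead(v,v)\ge\Ead(v_e,v_e)=2\,\Eadp(v_e|_{Z_+},v_e|_{Z_+})\ge 2c_\theta\,\mathcal E_\theta(g,g)\ge 2c_\theta\,\mathcal E_\theta(u,u),
\]
the last step because $g=f$ on $X\setminus\Omega$ and $u$ is the fractional minimizer. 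Hence $\min_{\mathcal A}\Ead=2c_\theta\,\mathcal E_\theta(u,u)$, attained at $\widetilde{Eu}$. Tracking equality for a minimizer $v$: equality in the convexity step forces $v=v_e$ (matching gradients, and then $v-v_e$ is constant on the connected $Z$ with zero trace); equality in the extension step forces $v_e|_{Z_+}=Eg$ by uniqueness of the extension minimizer; and $\mathcal E_\theta(g,g)=\mathcal E_\theta(u,u)$ forces $g=u$ by Theorem~\ref{thm:minimization}. So the $\Delta_a$-Dirichlet solution $w_f$ on $U_\Omega$ with datum $f$ exists, is unique in $D^{1,2}(Z)$ (any solution equals $\widetilde{Eu}$ throughout $Z$, hence has trace $u$ on $X$; when $f$ itself solves~\eqref{eq:fract-Laplace} this common trace is $f$, which is the asserted uniqueness), and $\Tr w_f=u$. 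Since $f$ solves~\eqref{eq:fract-Laplace} exactly when $f=u$, this is equivalent to $\Tr w_f=f$.

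For the maximum principles, assume $f$ solves~\eqref{eq:fract-Laplace}, so $\Tr w_f=f$, and set $M:=\text{esssup}_{X\setminus\Omega}f$ (trivial if $M=\infty$). I would use the truncation $\min(w_f,M)$, which lies in $\mathcal A$ (on $\partial U_\Omega$ one has $w_f=f\le M$ a.e.) and has $\Ead(\min(w_f,M),\min(w_f,M))\le\Ead(w_f,w_f)$; by the uniqueness above $\min(w_f,M)=w_f$, i.e.\ $w_f\le M$ $\mu_a$-a.e.\ on $U_\Omega$, and taking traces yields $f\le M$ a.e.\ on $X$, hence $\text{esssup}_\Omega f\le\text{esssup}_{X\setminus\Omega}f$. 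For the strong principle, suppose $x_0\in\Omega$ with $f(x_0)=M:=\text{esssup}_X f$, $f$ here being the locally H\"older continuous representative on $\Omega$ from Theorem~\ref{thm:mainthm1}; then $\text{esssup}_{X\setminus\Omega}f=M$, so $w_f\le M$ $\mu_a$-a.e.\ on $U_\Omega$. As $w_f$ is $\Delta_a$-harmonic it has a locally H\"older continuous representative on the open set $U_\Omega\supset\Omega\times\{0\}$, and $\Tr w_f=f$ forces $w_f(x_0,0)=f(x_0)=M$; thus $M-w_f\ge 0$ is $\Delta_a$-harmonic on the connected open set $U_\Omega$ and vanishes at the interior point $(x_0,0)$, so the Harnack inequality forces $w_f\equiv M$ on $U_\Omega$, hence $\mu_a$-a.e.\ on $Z$, whence $f\equiv M$ a.e.\ on $X$ by taking traces --- i.e.\ $f$ is constant.

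The hard part will be the reflection/identification step: making the even-part replacement rigorous inside the homogeneous Newton--Sobolev space $D^{1,2}(Z)$ and with respect to the trace on the $\mu_a$-null slice $X\times\{0\}$ --- namely that $\widetilde{Eu}\in D^{1,2}(Z)$ with exactly double the $Z_+$-energy, that the $X\times\{0\}$-trace of the $Z_+$-restriction of any $v\in D^{1,2}(Z)$ is again an admissible competitor in $B^\theta_{2,2}(X)$ (so that the extension correspondence of Theorem~\ref{thm:mainthm1} applies to it), and that the equality-case analysis genuinely determines the minimizer. Once this is secured, the two maximum principles follow routinely from truncation competitors and from the De Giorgi--Nash--Moser regularity and Harnack inequality for $\Delta_a$ on the $A_2$-weighted space $(Z,\mu_a)$.
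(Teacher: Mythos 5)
Your overall architecture matches the paper's: reduce the theorem to the Caffarelli--Silvestre extension picture, identify the $\Delta_a$-Dirichlet solution on $U_\Omega$ with the even reflection of the BLS extension, and derive the two maximum principles from truncation and regularity. Your truncation argument for the weak maximum principle is a valid alternative to the paper's (the paper instead applies the Markov property of $\mathcal{E}_\theta$ directly, which is shorter), and your Harnack argument for the strong maximum principle is essentially the same as the paper's appeal to the quasiminimizer theory of Kinnunen--Shanmugalingam.

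However, there is a genuine gap at the crux of your energy-chain argument. You assert as established that
\[
c_\theta\,\mathcal{E}_\theta(g,g)=\inf\bigl\{\Eadp(V,V)\,:\,V\in D^{1,2}(Z_+),\ TV=g\bigr\},
\]
with the infimum attained uniquely by the BLS/Poisson-type extension $\Pi_a g$, and you then run the inequality chain
\[
\Ead(v,v)\ge\Ead(v_e,v_e)=2\,\Eadp(v_e|_{Z_+},v_e|_{Z_+})\ge 2c_\theta\,\mathcal{E}_\theta(g,g)\ge 2c_\theta\,\mathcal{E}_\theta(u,u),
\]
tracking equalities to pin down the minimizer. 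But this sharp-constant minimality over $Z_+$ is not in the paper and is not routine in this setting. Proposition~\ref{prop:trace-energy} only computes $\Eadp(\Pi_a g,\Pi_a g)=c_\theta\,\mathcal{E}_\theta(g,g)$; it does not show this is a minimum. The usual harmonic-competitor argument ($\Eadp(V,V)=\Eadp(\Pi_ag,\Pi_ag)+\Eadp(V-\Pi_ag,V-\Pi_ag)$ after an orthogonality step) requires integrating by parts $\Eadp(\Pi_ag,h)$ against $h=V-\Pi_ag\in D^{1,2}(Z_+)$ with zero trace but possibly \emph{without} compact support in the unbounded domain $Z_+$; the boundary flux contribution at $y\to\infty$ need not vanish for general $h\in D^{1,2}(Z_+)$, because $D^{1,2}$-membership controls only the gradient and the mean values $h_{B_{2R}}$ can grow along with $\mu_a(B_{2R})^{1/2}/R$. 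What the paper supplies as a substitute for the sharp constant is Proposition~\ref{prop-Trace}, but that is only a two-sided \emph{comparison} $\|g\|_{B^\theta_{2,2}}^2\lesssim\Eadp(V,V)$ — this will not close your chain against $\Ead(\widetilde{Eu},\widetilde{Eu})=2c_\theta\,\mathcal{E}_\theta(u,u)$, and the equality-case analysis collapses entirely without the sharp constant.

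The paper sidesteps this by never posing a minimization on $Z_+$ alone: it uses Theorem~\ref{thm:cheegerminimizer} to obtain a unique Cheeger-harmonic solution $Hf$ to the well-posed Dirichlet problem on $U_\Omega$ (well-posed because $(X\setminus\Omega)\times\{0\}$ has positive $2$-capacity by Lemma~\ref{lem:cylinder-modulus} and $\mu_X(X\setminus\Omega)>0$, so Maz'ya's inequality applies and there are no issues at infinity), then uses Theorem~\ref{thm:cheegerharm2} to show the reflected $\Pi_a f$ is also Cheeger-harmonic in $U_\Omega$ when $f$ is a fractional solution, and concludes $\Pi_a f=Hf$ by uniqueness, so $THf=T\Pi_af=f$. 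The converse direction then follows because $Hf$ depends only on $f|_{X\setminus\Omega}$, so $THf$ is always the unique fractional minimizer. If you want to retain your spectral/CS-extension route, you would need to supply the missing minimality lemma for $\Pi_a g$ on all of $Z_+$ (e.g., by a spectral decomposition of $V(\cdot,y)$ reducing to a family of one-dimensional ODE minimizations with decay at infinity, or by a careful truncation argument exploiting the precise decay $y^a\partial_y\Pi_ag\to 0$ weakly in $L^2(X)$ from the proof of Proposition~\ref{prop:trace-energy}); otherwise it is cleaner to follow the $U_\Omega$-uniqueness route that the paper takes.
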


Here, by referring to $h\in D^{1,2}(Z)$ with $h=f$ in $\partial U_\Omega$ we mean that the trace of $h$ on $\partial U_\Omega$
is $\mu_X$-almost everywhere equal to $f$. To make sense of this, we do develop the notion of trace in the setting here,
see Section~\ref{Sect:Traces} below. Moreover, when we say that $\text{esssup}_{x\in X}f(x)=f(x_0)$ for some $x_0\in \Omega$,
we consider $f$ to be the continuous representative in $\Omega$ obtained from Theorem~\ref{thm:mainthm1}, with the
understanding that in $X\setminus\Omega$ the function $f$ is well-defined only $\mu_X$-almost everywhere.

The non-local nature of the fractional Laplacian is reflected in the non-local nature of the maximum and strong maximum 
principle. In the Euclidean setting, the maximum principle was obtained
in~\cite{CaSi,CLL, RO}.

The work~\cite{BGMN} studied scale-invariant Harnack inequalities for fractional powers of smooth parabolic and elliptic operators
on Euclidean spaces, extending the result of~\cite{CS} to this generality. Indeed, the smoothness assumption seems to be cosmetic
there, and it is not difficult to see that the work of~\cite{BGMN} extends also to the setting of sub-Riemannian manifolds.
It was pointed out in~\cite{BGMN} that their methods
extend to a general class of Dirichlet forms and associated infinitessimal generator as the elliptic operator. 

During the preparation of this manuscript, we became aware of the concurrent work by Baudoin, Lang, and Sire~\cite{BLS}, which 
established the Harnack principle of solutions to fractional Laplace problems in the context of strongly local Dirichlet forms
that satisfy a $2$-Poincar\'e inequality, and further studied an analog of the boundary Harnack principle for 
the case that $\Omega$ is an inner uniform domain in $X$.
Their approach, as well as that of~\cite{BGMN}, is based on spectral theory and 
gave us valuable tools to use in the study undertaken here. In our setting,
we consider a specific Dirichlet form given by the measurable inner product structure on a choice of
Cheeger differential structure available on the metric space. Given the quasiconvexity of $X$ (a consequence of the measure
being doubling and supporting a Poincar\'e inequality), the Dirichlet form of interest here satisfies the hypotheses of~\cite{BLS}.
Hence the Harnack inequality of the above theorem follows directly from~\cite{BLS}. In
Theorem~\ref{thm:mainthm1} we combine the tools developed
in~\cite{BLS} with additional tools related to potential theory in metric setting to add to the results of~\cite{BLS} in our 
context. We also connect the domain of the fractional Laplacians
explicitly to Besov spaces, traces and the upper gradient approach. In particular, we 
use their results related to the explicit Poisson-type extension~\eqref{eq:explicit}, see also~\cite{BGMN}.

There is a rich collection of mathematical literature on fractional orders of operators in smooth setting, and it is not possible
to list them all here. We direct the interested reader to the references cited above as well as the papers
cited in them.

\subsection*{Acknowledgements}
S.E-B. was partially supported by the National Science Foundation (U.S.) grant No. DMS-1704215 
and by the Finnish Academy under Research Postdoctoral Grant No. 330048. R.K.~was supported 
by Academy of Finland Grant No. 308063. N.S.~was partially supported by the National Science 
Foundation (U.S.) grant No. DMS~\#1800161. G.S. was supported by Simons Collaboration Grant No. 576219. G.G.  was supported by Horizon 2020~\# 777822: GHAIA 
and by  MEC-Feder grant MTM2017-84851-C2-1-P.

The authors are thankful to IMPAN for hosting the semester ``Geometry and analysis in 
function and mapping theory on Euclidean and metric measure space'', where part of this 
research was conducted. This work was also partially supported by the grant \#346300 for 
IMPAN from the Simons Foundation and the matching 2015-2019 Polish MNiSW fund.

The authors thank Yannick Sire for helpful discussions and for sharing an early manuscript of~\cite{BLS} with us.

\section{Background and Notation}\label{Sect:Background}

\subsection{Newton-Sobolev spaces and related notions}

In this paper we are concerned with a metric measure space $(X,d_X,\mu_X)$.
We first start with the notion of $2$-modulus of a family of curves in $X$. Given a family $\Gamma$ of curves in $X$,
the $2$-modulus of this family is the number
\[
\Mod_2(\Gamma):=\inf_\rho \int_X\rho^2\, d\mu_X,
\]
where the infimum is over all non-negative Borel measurable functions $\rho$ on $X$ that satisfy $\int_\gamma\rho\, ds\ge 1$
for each locally rectifiable curve $\gamma\in\Gamma$.

The notion of upper gradients from~\cite{HeiK} forms the foundation of first order analysis in metric measure spaces.
Given a metric space  $(X,d_X)$, 
a non-negative Borel function $g$ on $X$ is an upper gradient of a map $u\colon X\to \mathbb{R}\cup\{-\infty,\infty\}$ if
\[
|u(\gamma(b))-u(\gamma(a))|\leq \int_{\gamma}g \, ds
\]
for every rectifiable curve $\gamma\colon [a,b]\to X$. The right-hand side of the above is required to be infinite when
at least one of $u(\gamma(b))$, $u(\gamma(a))$ is not finite. We say that $g$ is a $2$-weak upper gradient (or weak upper 
gradient for short) if there is a family $\Gamma$ of curves in $X$ such that $(u,g)$ satisfies the above inequality for each
non-constant compact rectifiable curve in $X$ that does not belong to $\Gamma$ and $\Mod_2(\Gamma)=0$. 

In this paper, we extend the study of potential theory associated with the fractional Laplacian 
to a complete doubling metric measure space $(X,d_X,\mu_X)$ supporting a $2$-Poincar\'e inequality. 

\begin{defn}\label{deff:SobolevN-D}
The Newton-Sobolev space $N^{1,2}(X)$
 of all functions $f:X\to\R$ with the property that $\int_X |f|^2\, d\mu_X<\infty$
and with $\inf_g\int_Xg^2\, d\mu_X < \infty$, where the infimum is over all $2$-weak upper gradients $g$ of $f$,
is a Banach space (see~\cite{ HKSTbook,N}). 

Following~\cite{HKSTbook}, by 
$D^{1,2}(X)$ we mean the class of functions $f\in L^1_{loc}(X)$ with an upper gradient $g\in L^2(X)$. 
\end{defn}

The space $N^{1,2}(X)$ is also called the \emph{inhomogeneous}
Newton-Sobolev space, while the space $D^{1,2}(X)$ is also called the \emph{homogeneous} Newton-Sobolev space as
it will contain nonzero constant functions as well.

Just as sets of measure zero play a role in the study of $L^p$-spaces, the sets of $2$-capacity zero play a role in the study
of Sobolev spaces. 

\begin{defn}\label{deff:2-Cap}
Given a set $E\subset X$, the $2$-capacity of the set is the number
\[
\text{Cap}_2(E):=\inf_{(u,g)} \int_X|u|^2\, d\mu_X+\int_Xg^2\, d\mu_X,
\]
where the infimum is over all pairs of functions $(u,g)$ with $u\in N^{1,2}(X)$ satisfying $u\ge 1$ on $E$ and 
$g$ a $2$-weak upper gradient of $u$.
\end{defn}

We assume in this paper that the measure $\mu_X$ is doubling, namely there is a constant $C\ge 1$ such 
that $\mu_X(B(x,2r))\le C\, \mu_X(B(x,r))$ for all $x\in X$ and $r>0$.
If $X$ is connected and $\mu_X$ is doubling, there exist constants $c,C>0$ and $b_l,b_u>0$ for which
\begin{equation} \label{eq:volumegrowth}
c\left(\frac{r}{R}\right)^{b_l}\leq \frac{\mu_X(B(x,r))}{\mu_X(B(x,R))} \leq  C\left(\frac{r}{R}\right)^{b_u}
\end{equation}
for each $0<r<R<\infty$. If $b_{u}=b_{l}$ then the space is Ahlfors $b_u$-regular.

The assumption of connectedness of $X$ is not a loss of generality. Indeed, the assumption that $X$ supports a 
$2$-Poincar\'e inequality (see the next paragraph) immediately implies that $X$ is connected.

We also assume that $(X,d_X,\mu_X)$ supports a $2$-Poincar\'e 
inequality, that is, there are constants $C>0$ and $\lambda\ge 1$ such that whenever $B(x,r)$ is a ball in $X$ and
$f\in N^{1,2}(X)$ and $g$ is an upper gradient of $f$, we have
\begin{equation}\label{eq:def-PI}
 \vint_{B(x,r)}|f-f_{B(x,r)}|\, d\mu_X\le C\, r\, \left(\vint_{B(x,\lambda r)}g^2\, d\mu_X\right)^{1/2}.
\end{equation}

When the doubling space $X$ supports a $2$-Poincar\'e inequality, it also supports an a priori stronger $(2,2)$-Poincar\'e 
inequality:
\[
 \vint_{B(x,r)}|f-f_{B(x,r)}|^2\, d\mu_X\le C\, r^2\, \vint_{B(x,\lambda r)}g^2\, d\mu_X.
 \]
See \cite{HK} for more information.

\begin{remark}\label{remk:Cartesian-PI}
Recall that we are interested in the weighted measure $|y|^a\,dy$ with $-1<a<1$.
By~\cite[page~10]{HKM}  the measure $|y|^a dy$ is an $A_2$-weight on $\R$, 
and hence we know that both $\R$ and $(0,\infty)$, 
equipped with the Euclidean metric and the measure $|y|^a\, dy$, supports a $2$-Poincar\'e 
inequality and this weighted measure is doubling. Hence the Cartesian product $Z=X\times\R$
as well as the Cartesian product $Z_+=X\times(0,\infty)$, equipped with 
the metric $d_Z$ and the
measure $\mu_a$, also supports a $2$-Poincar\'e inequality with $\mu_a$ doubling, see~\cite[Remark~4]{BB2}
and~\cite{BjSh} (where we use the fact that $Z_+$ is a uniform domain, see Proposition~\ref{prop:unif+PI} below). 
\end{remark}

There is a strengthening of~\eqref{eq:def-PI} under the assumption that $\{f=0\}\cap B(x,r)$ is large:
\begin{equation}\label{eq:Mazya}
\int_{B(x,r)}|f|^2\, d\mu_X\le C\, (r^2+1)\, \frac{\mu_X(B(x,r))}{\text{Cap}_2(B(x,r)\cap N_f)}\, \int_{B(x,\lambda r)}g^2\, d\mu_X,
\end{equation}
where $N_f:=\{w\in X\, :\, f(w)=0\}$. This inequality is known as the Maz'ya capacitary inequality, see~\cite{Maz}.
For the setting of doubling metric measure spaces supporting a $(2,2)$-Poincar\'e inequality, a good
reference is~\cite[Theorem~5.53]{BB}.

\subsection{Cheeger Differential Structure}

In this subsection we describe the Cheeger differential structure.

\begin{defn}\label{deff:Ch-Diff-Struct}
A \emph{system of Lipschitz charts} $\{(U_i, \varphi_i) : i \in \N\}$ for a metric measure space $(X,d_X,\mu_X)$ is a 
collection of countably many measurable sets $U_i \subset X$ and Lipschitz maps $\varphi_i \co X \to \R^{n_i}$ for 
each $i \in \N$, so that
\[
\mu_X\left(X \setminus \bigcup_{i \in \N} U_i\right) = 0
\]
and for any Lipschitz function $f \co X \to \R$ and for each $i$ the following holds. 
For almost every $x\in U_{i}$, there exists a unique $D_Xf(x)\in \bbR^{n_{i}}$ such that 
\begin{equation}\label{eq:diff}
\lim_{y\to x} \frac{f(y)-f(x)-D_Xf(x)\cdot (\varphi_i(y)-\varphi_i(x))}{d(x,y)}=0.
\end{equation}
A space with a system of Lipschitz charts is said to admit a differentiable structure and is called a 
\emph{Lipschitz differentiability space}.
\end{defn}

The equation~\eqref{eq:diff} gives the first order Taylor expansion of $f$ near $x$ with respect to the basis $\varphi_i$.
This should not be confused with the notion of weak, or distributional, derivative that is usually considered with Sobolev
spaces in the Euclidean setting.

Recall that any doubling metric measure space admitting a $(1,2)$-Poincar\'e inequality admits a differential structure by \cite{Ch},
with each $n_i\le N$ for some positive integer $N$ that depends solely on the doubling constant of the measure $\mu_X$.
By embedding each $\R^{n_i}$ into $\R^N$ if necessary, we may therefore assume that each $n_i=N$. In this case,
the uniqueness of $D_X$ is preserved by ensuring that the entries in the vector $D_X f(x)$ corresponding to the components
$n_i+1,\cdots, N$ are all zero when $n_i<N$. Hence, in our setting of $(X,d_X,\mu_X)$, we have a linear map
\[
D_X: N^{1,2}(X)\to L^2(X)^N
\] 
for some fixed positive integer $N$ that is determined by the doubling property of the
measure $\mu_X$. It was also shown in~\cite{Ch} that there is a measurable inner product structure 
related to the differential structure $D_X$, that is, for Lipschitz functions $f,h$ (and then by extension, to functions $f,h\in N^{1,2}(X)$),
for $\mu_X$-a.e.~$x\in X$ we have $\langle D_Xf(x),D_Xh(x)\rangle_x$ such that as a function of $x$ this is measurable, and there is
a constant $C>0$ that is independent of $f$ such that for $\mu_X$-a.e.~$x\in X$,
\[
\frac{1}{C}g_f(x)^2 \le \langle D_Xf(x),D_Xf(x)\rangle_x \le  C g_f(x)^2.
\]

\subsection{Besov Classes, Cheeger Harmonicity, and Dirichlet Forms}
  
In our context the replacement for the 
standard Laplacian $\Delta_X$ is the infinitesimal generator $\Delta_X$ associated with the Dirichlet form
\[
\mathcal{E}_X(f,g) = \int_X \langle D_Xf(x), D_Xg(x) \rangle_x \, d\mu_X(x)
\]
as described in~\cite{FOT}.
While $N^{1,2}(X)$ need not be a Hilbert space under the norm
$\Vert f\Vert_{L^2(X)}+\inf_g\Vert g\Vert_{L^2(X)}$ where the infimum is over all upper gradients $g$ of $f$, 
it does turn into a Hilbert space under the norm
\[
\Vert f\Vert_{L^2(X)}+\sqrt{\int_X\langle D_Xf(x),D_Xf(x)\rangle_x\, d\mu_X(x)},
\]
as seen from~\cite{Ch} or~\cite[Theorem~10]{FHK}. 

From Remark~\ref{remk:Cartesian-PI} above, we know that $Z$ also comes equipped with a choice of a 
Cheeger differential structure.
We are interested in considering a particular Cheeger structure on $Z$. This structure is obtained as a Cartesian
tensorization of the (choice of) Cheeger differential structure $D_X$ on $X$ and the standard Euclidean differential
structure on $\R$, as explained in 
Subsection~\ref{Subsect:diff-struct-tensor} below, see Theorem~\ref{diffproduct}.

\begin{defn}\label{defn:Dirich-a}
When considering the above-mentioned Dirichlet form associated with the metric space $Z$ as described at 
the end of Section~\ref{Sect:Intro} above,  the Dirichlet form obtained on $(Z,d_Z,\mu_a)$ from 
the Cartesian tensor product of the Dirichlet form $\mathcal{E}_X$ and the natural Dirichlet form on $(\R, d_{Euc},|y|^a\, dy)$
is denoted by $\Ead$.
\end{defn}

It was shown in~\cite{GKS} that
the interpolation of $L^2(X)$ with $N^{1,2}(X)$ yields Besov classes $B^\theta_{2,2}(X)\cap L^2(X)$  of functions
$f\in L^2(X)$ for which the (non-local) energy semi-norm $\Vert f\Vert_{B^\theta_{2,2}(X)}$ given by 
\begin{equation}\label{eq:BesovEnergy}
\Vert f\Vert_{B^\theta_{2,2}(X)}^2:=\int_X\int_X\frac{|f(z)-f(w)|^2}{d(z,w)^{\theta}\mu_X(B(z,d(z,w)))}\, d\mu_X(z)\, d\mu_X(w)
\end{equation}
is finite. This energy is comparable to the one given by $\mathcal{E}_\theta$ in Equation \eqref{eq:frac-Dirich-1}. 
The comparability was proved in \cite[Corollary 5.5]{G} in the Ahlfors regular case. 
However, that variants of the Gaussian bounds 
used in~\cite{G} apply also in the doubling case (see e.g.~\cite{S-C}). This leads to the comparability of the 
$B^\theta_{2,2}$-- and $\mathcal{E}_\theta$-- energies on doubling spaces satisfying a $2$-Poincar\'e inequality. 
We give a direct proof of this for the readers convenience in Proposition \ref{prop:besovenergy}. 

Yet another advantage of considering the Besov space is its identity as the trace space of a Newton-Sobolev space, namely,
$B^\theta_{2,2}(X)$ is the trace space of the homogeneous Newton-Sobolev space
$D^{1,2}(Z_+)$ consisting of all functions $f\in L^2_{loc}(Z,\mu_a)$ such that $f$ has an upper gradient $g$ in $Z_+$ 
with $\int_{Z_+}g^2\, d\mu_a$ finite. Here, $a$ and $\theta$ are related by the equation $a=1-2\theta$, see~\cite{LS}
for related results on traces of weighted Sobolev spaces and Besov spaces. For $Z_+$ as a domain in the weighted space 
$(Z,d_Z,\mu_a)$ this trace result is established in Proposition~\ref{prop-Trace} below. 
By trace class we mean that every function $u\in D^{1,2}(Z_+)$ has a trace $Tu:X\to\R$ given by
\begin{equation}\label{eq:Trace}
Tu(x)=\lim_{r\to 0^+}\vint_{B((x,0),r)\cap Z_+}u\, d\mu_a
\end{equation}
for $\mu_X$-a.e.~$x\in X$. We show that $Tu\in B^\theta_{2,2}(X)$ and that for each $f\in B^\theta_{2,2}(X)$ there exists
a function $Ef\in D^{1,2}(Z_+)$ such that $TEf=f$.
As a consequence,  we know that
whenever  $f\in B^\theta_{2,2}(X)$, there is at least one function in $D^{1,2}(Z_+)$ whose trace is $f$, and we use this to
establish the existence of Cheeger harmonic functions in $Z_+$ with trace $f$, see the discussion in
Section~\ref{Sect:Exist-CheegerHarm} below.

\begin{defn}\label{def:CheegerHarm}
Given a domain $U\subset Z$, we say that a function $u$ on $U$ is Cheeger harmonic (or Cheeger $2$-harmonic) in $U$ if
$u\in N^{1,2}_{loc}(U,\mu_a)$ and whenever $v\in N^{1,2}(U,\mu_a)$ with compact support contained in $U$ we have
$\Ead(u,v)=0$.
\end{defn}

\begin{defn}\label{def:fract-sol}
Fix a bounded domain $\Omega\subset X$ such that $\mu_X(X\setminus\Omega)>0$ and a function
$f\in B^\theta_{2,2}(X) \cap L^2(X)$. Then we call a function $u\in B^\theta_{2,2}(X)$ 
a \emph{solution to the Dirichlet
problem $(-\Delta_X)^\theta u=0$ on $\Omega$
with boundary data $f$} if $u$ satisfies~\eqref{eq:fract-Laplace}, that is,
\begin{enumerate}
\item $u=f$ almost everywhere on $X\setminus\Omega$, and
\item for all $h\in B^\theta_{2,2}(X)$ satisfying $h=f$ almost everywhere in $X\setminus\Omega$, we have
\[
\mathcal{E}_\theta(u,u)\leq \mathcal{E}_\theta(h,h).
\]
\end{enumerate}
A direct argument using calculus of variations gives the weak formulation of the Euler-Lagrange equation associated with 
the above minimization property; namely, $u$ is a minimizer in the sense of~(2) above if and only if 
for each $h \in B^\theta_{2,2}(X)$ with compact support in $\Omega$,
\[
\mathcal{E}_\theta(u,h)=0.
\]
\end{defn}

Observe that if $u\in B^\theta_{2,2}(X)$ with
$u=f$ on $X\setminus\Omega$ and $\Omega$ is bounded, then $u\in L^2(X)$ whenever $f\in L^2(X)$.

The non-local nature of the energy $\mathcal{E}_\theta$ means that we cannot replace the
global energy $\mathcal{E}_\theta$ with a local energy $\mathcal{E}^\Omega_\theta$ adapted to 
the smaller set that is $\Omega$.  
Hence the classical approaches of De Giorgi and Nash-Moser (see~\cite{BM, KiSh}) are not
applicable in the study of these solutions. We instead adapt the method set out in the Euclidean setting by
Caffarelli and Silvestre~\cite{CS}.

In addition to the construction of Cheeger harmonic functions as in Section~\ref{Sect:Exist-CheegerHarm},
we will also use the explicit extension of $f\in B^\theta_{2,2}(X)\cap L^2(X)$ to $Z_+$ 
given in~\cite[Lemma~3.1 and bottom of page~8]{BLS}. 
We show that their extension is Cheeger harmonic, and by 
uniqueness of Cheeger harmonic extensions, we obtain that both solutions coincide.
To do so, we show that the trace of their extension in the sense of~\eqref{eq:Trace} above
coincides with $f$ and that their extension also belongs to $D^{1,2}(Z_+)$.

Then, we show that the reflection of $u$ along $\partial Z_+$ given by
\[
u^*(x,y):=\begin{cases} u(x,y) &\text{ if }y\ge 0,\\
    u(x,-y) &\text{ if }y<0\end{cases}
\]
gives a function that is Cheeger harmonic in $\Omega\times\R$ and hence is a quasiminimizer in the sense of~\cite{KiSh}. 
The Cheeger harmonicity follows from uniqueness and considering a Dirichlet problem on 
$Z \setminus (X \setminus \Omega \times \{0\})$  with boundary data $f$ and observing symmetry.
Finally, application of the regularity results from~\cite{KiSh}
yields the regularity results for $f$ referred to in Theorem~\ref{thm:mainthm1} above.

\subsection{Uniform Domains and Co-Dimension Hausdorff Measures}

In this subsection we will gather together the geometric and measure-theoretic notions needed in discussing traces of 
functions in $D^{1,2}(Z_+)$. For a domain $U \subset Z$ denote the distance to the complement by 
$\delta_U(z)=d(z,Z \setminus U)$.

\begin{defn}\label{def:Uniform}
A domain $U\subset Z$ is $A$-uniform for some $A\geq 1$ if for every pair $x,y\in U$ there is a curve $\gamma \in U$ 
connecting $x$ and $y$ so that its length $\ell(\gamma)$ satisfies $\ell(\gamma)\leq Ad_Z(x,y)$ and for all $z\in \gamma$,
\[
\delta_{U}(z)\geq A^{-1}\min \{\ell(\gamma_{x,z}), \ell(\gamma_{y,z})\}.
\]
Here $\gamma_{x,z}$ and $\gamma_{y,z}$ are the subcurves of $\gamma$ connecting $z$ to $x$ and $y$ 
respectively. Such a curve is called an $A$-uniform curve.
\end{defn}

Let $\tau>0$ and $K\subset Z$. The co-dimension $\tau$ Hausdorff measure of $K$ is
\begin{equation}\label{eq:co-dim-tau-Hausdorff}
\mathcal{H}^{*,\tau}(K):=\lim_{\eps\to 0^+}
\inf\bigg\lbrace \sum_{i\in I\subset\N}\frac{\mu_a(B_i)}{\rad(B_i)^\tau}\, :\, A\subset\bigcup_{i\in I}B_i\, 
\text{ and }\rad(B_i)<\eps\bigg\rbrace.
\end{equation}

\subsection{Heat-Kernel Associated with a Dirichlet Form}

Corresponding to the Dirichlet form $\mathcal{E}_X$, there is a heat kernel $p_t:X\times X\to[0,\infty)$, $t>0$, 
see~\cite{BLS,FOT,G, S-C,Sta, Stb, St1, Stc}. While \cite{G} studied the structure of heat
kernels in the setting of Riemannian manifolds, the series of papers~\cite{Sta, Stb, St1, Stc} first studied
them in the setting of metric measure spaces equipped with a strongly local Dirichlet form such that the intrinsic
metric obtained from such a Dirichlet form gives a doubling measure supporting a $2$-Poincar\'e inequality.
From~\cite{KST} we know that the Dirichlet form obtained from a Cheeger differential structure as explained above
fits the hypotheses given in~\cite{St1}, and hence the results of~\cite{St1} apply here.

The heat kernel helps us construct solutions to the heat equation with initial data $f\in L^2(X)$. With
\[
P_tf(x):=\int_X f(w)p_t(x,w)d\mu_X(w),
\]
we know that whenever $v$ is a Lipschitz function on $X\times[0,\infty)$ with compact support in $X\times(0,\infty)$,
\[
\int_0^\infty\mathcal{E}_X(P_tf,v)\, dt+\int_X\int_0^\infty v(w,t)\, \partial_t P_tf(w)\, d\mu_X(w)\, dt=0.
\]
Embedded in the above claim is also the property that $t\mapsto P_t f(x)$ is differentiable with respect to almost every $t$ 
such that for each $x\in X$, the map $t\mapsto P_tf(x)$ is absolutely continuous on compact subintervals of $(0,\infty)$.
In~\cite{FOT} the heat operator $P_t$ is denoted by $T_t$, but we follow the notation from~\cite{BLS,G} here.
The following lemma gathers together results from~\cite{St1}.

\begin{lemma}\label{lem:heatkernel} We have the following properties for the heat kernel:
\begin{enumerate}
\item Markovian property:
\[
\int_X p_t(x,z) ~d\mu_X(z) = 1.
\]
\item Sub-Gaussian bounds:
\begin{equation}
\frac{1}{C_1\mu_X(B(x,\sqrt{t}))} e^{-\frac{c_1d(x,z)^2}{t}} \leq p_t(x,z) \leq \frac{C_2}{\mu_X(B(x,\sqrt{t}))} e^{-\frac{c_2d(x,z)^2}{t}}.
\label{eq:hke}
\end{equation}
\item Symmetry:  $p_t(x,z)=p_t(z,x)$, for each $x,z$ in $X$.
\item The semigroup property holds: 
\begin{equation}
 p_{t+s}(x,z)=\int_X p_t(x,w) p_s(w,z) ~d\mu_X(w).
 \label{eq:CK}
\end{equation}
\end{enumerate}
\end{lemma}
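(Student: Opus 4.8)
\emph{Proof proposal.} The plan is to verify that the Dirichlet form $\mathcal{E}_X$ meets the abstract structural hypotheses under which the heat-kernel theory of~\cite{St1} applies, and then to read off the four listed properties. First I would recall that $\mathcal{E}_X$ is a symmetric, strongly local, regular Dirichlet form on $L^2(X,\mu_X)$: strong locality is immediate from its definition via the Cheeger differential $D_X$ (there is no jump and no killing part), symmetry is built into the measurable inner product $\langle\cdot,\cdot\rangle_x$, and regularity---density of compactly supported Lipschitz functions in the form domain---follows from the Newton--Sobolev theory together with the doubling and $2$-Poincar\'e assumptions (see~\cite{Ch,FHK,HKSTbook}). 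The crucial point, recorded in~\cite{KST}, is that the \emph{intrinsic metric} associated with $\mathcal{E}_X$ is bi-Lipschitz equivalent to $d_X$; consequently the metric measure space $(X,d_X,\mu_X)$, with $\mu_X$ doubling and supporting a $2$-Poincar\'e inequality, satisfies exactly the hypotheses of~\cite{St1}. In particular, by the main equivalence theorem there (parabolic Harnack inequality $\Longleftrightarrow$ volume doubling $+$ Poincar\'e), the semigroup $P_t=e^{t\Delta_X}$ has a jointly continuous integral kernel $p_t(x,z)$ satisfying two-sided Gaussian bounds of the form~\eqref{eq:hke}; this is item~(2).

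I would then derive the remaining three items from standard semigroup considerations. For item~(3), the symmetry $p_t(x,z)=p_t(z,x)$ is a direct consequence of the self-adjointness of $P_t$ on $L^2(X,\mu_X)$: since $\langle P_tf,h\rangle=\langle f,P_th\rangle$ for all $f,h\in L^2(X)$ and $p_t$ is the continuous kernel of $P_t$, the two continuous functions $(x,z)\mapsto p_t(x,z)$ and $(x,z)\mapsto p_t(z,x)$ coincide $\mu_X\times\mu_X$-a.e.\ and hence everywhere. For item~(4), the semigroup identity $P_{t+s}=P_t\circ P_s$ combined with Fubini's theorem and the non-negativity and joint continuity of $p_t$ yields the Chapman--Kolmogorov relation~\eqref{eq:CK}. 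For item~(1), one has a priori only the sub-Markovian inequality $\int_X p_t(x,z)\,d\mu_X(z)\le 1$; to upgrade it to an equality I would invoke stochastic completeness (conservativeness), which in this setting follows from the polynomial volume growth implied by doubling (more precisely, from~\eqref{eq:volumegrowth}) together with the Gaussian upper bound, via the standard criterion of Sturm (see~\cite{St1}). This forces $P_t1=1$, which is item~(1).

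The main obstacle is bookkeeping rather than new mathematics: one must make sure that the Dirichlet form attached to a \emph{choice} of Cheeger differential structure is genuinely the symmetric, strongly local, regular Dirichlet form to which the framework of~\cite{St1} applies, and that its intrinsic metric is comparable to $d_X$---this is precisely where the identification from~\cite{KST} must be quoted carefully, and it is why the constants $C_1,C_2,c_1,c_2$ in~\eqref{eq:hke} depend on the doubling constant of $\mu_X$, the Poincar\'e constants, and the bi-Lipschitz constant between the intrinsic metric and $d_X$. A minor additional point is that, under only a $2$-Poincar\'e inequality (rather than Ahlfors regularity with a nontrivial walk dimension), the estimates~\eqref{eq:hke} are the genuine Gaussian bounds, so the statement as written is the correct specialization, and nothing beyond~\cite{St1} (and~\cite{KST} for the hypotheses) is needed.
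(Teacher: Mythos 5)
Your proposal is correct and follows essentially the same route as the paper, which offers no written proof at all: it simply notes that by~\cite{KST} the Cheeger Dirichlet form satisfies the hypotheses of~\cite{St1} and then states that the lemma ``gathers together results from~\cite{St1}.'' You supply slightly more detail (deriving symmetry, Chapman--Kolmogorov, and conservativeness from standard semigroup and volume-growth arguments rather than citing them wholesale), but the key step---the identification via~\cite{KST} that makes Sturm's theory applicable---is exactly the one the paper relies on.
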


\subsection{Spectral Theory}\label{sec:background}

The spectral approach to fractional Laplacians seems to have been first formulated in~\cite{ST}, see for instance
the discussion in~\cite[page~2]{BLS}.
Our presentation also closely follows \cite{G}, but a nice description of the relationship between Dirichlet forms and spectral theory
is also given in~\cite[page~17]{FOT}. 
We note that the results at the end of this section are covered in the Ahlfors regular case in \cite{G}. 
Specifically, we slightly simplify the proof of \cite[Corollary 5.5]{G} and demonstrate that their work also applies in 
the doubling context without Ahlfors regularity. 

The Dirichlet form $\mathcal{E}_X(u,v)$ as described at the beginning of this section 
defines a closed regular Dirichlet form with domain $\mathcal{F}_X= N^{1,2}(X)$. Here, the equality is in the sense that 
each function $f \in \mathcal{F}_X$ has an almost everywhere representative in $N^{1,2}$. 
While, as described above, $Z$ also is equipped with a Dirichlet form, we will apply spectral theory solely to $\mathcal{E}_X$.
The associated Laplacian is denoted 
$\Delta_X$ with $\mathcal{D}(\Delta_X) \subset N^{1,2}(X) \subset L^2(X)$. This operator is self-adjoint 
and has a spectral decomposition
\[
-\Delta_X = \int_0^\infty \lambda dE_\lambda,
\]
where $dE_\lambda$ is a projection valued measure, i.e. the spectral resolution. 
This integral can be made sense of through pairing, namely 
$\int (-\Delta_X) u v \,d\mu_X = \int_0^\infty \lambda \langle dE_\lambda(u),v\rangle$ for $u,v\in L^2(X)$. 
See e.g. \cite{BLS, FOT} or \cite[Chapter XI]{Y}
 for further discussion. 
In particular, if $u$ is in the domain of $\Delta_X$ and $v\in L^2(X)$, then
\[
-\int_X v(x)\Delta_Xu(x)\, d\mu_X(x)=\int_0^\infty \lambda dE_\lambda(u,v).
\]
Note that if $v$ also belongs to $N^{1,2}(X)$, then
\[
\int_Xv(x)\Delta_Xu(x)\, d\mu_X(x)=-\int_X\langle D_Xv(x), D_Xu(x)\rangle_x\, d\mu_X(x).
\]
The domain of the Laplacian $\Delta_X$, denoted $\mathcal{D}(\Delta_X)$, can also be described by the equation
$\mathcal{D}(\Delta_X) = \{u \in L^2(X) : \int_0^\infty \lambda^2 dE_\lambda(u,u)  <\infty\}$. 
The domain of the Dirichlet form, $\mathcal{D}(\mathcal{E}_X)=N^{1,2}(X)$, is 
identifiable with $\{u \in L^2(X) : \int_0^\infty \lambda dE_\lambda(u,u) < \infty\}$.
The heat semigroup $\{P_t\}_{t>0}$ is given by
\[
P_t  = \int_0^\infty e^{-\lambda t} dE_\lambda,
\]
which is a contraction on $L^2(X)$.
For $0<\theta<1$ the fractional Laplacian can be expressed as
\[
(-\Delta_X)^\theta = \int_0^\infty \lambda^\theta dE_\lambda,
\]
and so when $u$ is in the domain of $\Delta_X$ and $v\in L^2(X)$, we have 
\[
\int_X ((-\Delta_X)^\theta u)\, v\, d\mu_X=\int_0^\infty\lambda^\theta dE_\lambda(u,v).
\]
We will also use
\[
\mathcal{F}_\theta = \{u \in L^2(X) : \int_0^\infty \lambda^{\theta} dE_\lambda(u,u)< \infty\} 
= \mathcal{D}((-\Delta_X)^\frac{\theta}{2}).
\]

The energy $\mathcal{E}_X$ 
can be recovered through a regularization process by defining
\[
\mathcal{E}_{X,t}(f,f) = \frac{1}{t} \int_X (f-P_t f) f ~d\mu_X=\frac{1}{2t} \int_X \int_X |f(x)-f(w)|^2 p_t(x,w) d\mu_X(x) d\mu_X(w).
\]
For the last equality, see \cite[Section 4]{G}. Then sending $t\to 0$ gives
\[
\lim_{t\to 0} \mathcal{E}_{X,t}(f,f) = \mathcal{E}_{X}(f,f),
\]
with $\mathcal{F}_X=\{f \in L^2(X) \colon \sup_{t>0} \mathcal{E}_{X,t}(f,f) < \infty \}$. 
The benefit of the regularization is that $\mathcal{E}_{X,t}(f,f)$ is defined and finite for all $f\in L^2(X)$ and a fixed $t>0$.

There is a non-negative continuous function $\eta^\theta_t(s):(0,\infty) \to (0,\infty)$ for which
$$e^{-t\lambda^\theta}=\int_0^\infty \eta^\theta_t(s) e^{-\lambda s} ds.$$
Thus the heat kernel corresponding to the infinitesimal generator $(-\Delta_X)^\theta$ is given by 
\begin{equation}\label{eq:qtdef}
q_t(x,y) = \int_0^\infty \eta^\theta_t(s) p_s(x,y) ~ds.
\end{equation}
For these facts see \cite[Section 5]{G} or \cite[Chapter IX, Section 11]{Y}.

The following lemma gives the required estimates for $q_t$ in doubling metric measure spaces.

\begin{lemma}\label{lem:qtestimates} 
Suppose $(X,d_X,\mu_X)$ is connected and measure doubling and the conclusions of 
Lemma~\ref{lem:heatkernel} hold. Then there are constants $C_1,C_2>0$ for which
\begin{eqnarray}
q_t(x,y) & \leq & C_1 \frac{t}{d(x,y)^{\theta} \mu_X(B(x,d(x,y)))} \ \ \ \text{ when } t,s>0 \label{eq:qestabove} \\
q_t(x,y) & \geq & C_2 \frac{t}{d(x,y)^{\theta} \mu_X(B(x,d(x,y)))} \ \ \ \text{ when } d(x,y)^{\theta}\geq t > 0 \label{eq:qestbeloq}
\end{eqnarray}

\end{lemma}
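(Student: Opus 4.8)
The plan is to run the subordination argument underlying~\cite[Corollary~5.5]{G}, while tracking the volume factor $\mu_X(B(x,\cdot))$ through the doubling property rather than through a fixed power of the radius (the latter being unavailable without Ahlfors regularity). The starting point is the subordination formula~\eqref{eq:qtdef}, which exhibits $q_t(x,y)$ as the $\eta^\theta_t$-weighted average $\int_0^\infty\eta^\theta_t(s)\,p_s(x,y)\,ds$, so the whole question reduces to estimating that integral by combining the sub-Gaussian two-sided bounds for $p_s$ from Lemma~\ref{lem:heatkernel} with classical pointwise bounds for the stable subordinator density $\eta^\theta_t$. The facts about $\eta^\theta_t$ I would quote (from~\cite[Section~5]{G} or~\cite[Chapter~IX, Section~11]{Y}) are: $\eta^\theta_t\ge 0$ with $\int_0^\infty\eta^\theta_t(s)\,ds=1$; the exact scaling relation $\eta^\theta_t(s)=t^{-1/\theta}\,\eta^\theta_1(t^{-1/\theta}s)$, which follows from uniqueness of Laplace transforms; and the asymptotics of $\eta^\theta_1$, namely that $\eta^\theta_1(r)\le C\,r^{-1-\theta}$ for all $r>0$ (with super-polynomial decay as $r\to 0^+$) and $\eta^\theta_1(r)\ge c\,r^{-1-\theta}$ for $r\ge 1$. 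Combining the last two gives the two bounds I actually use: $\eta^\theta_t(s)\le C\,t\,s^{-1-\theta}$ for all $s,t>0$, and $\eta^\theta_t(s)\ge c\,t\,s^{-1-\theta}$ once $s\ge t^{1/\theta}$.

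For the upper estimate~\eqref{eq:qestabove}, write $d=d_X(x,y)$; feeding the upper sub-Gaussian bound for $p_s$ and the bound $\eta^\theta_t(s)\le C\,t\,s^{-1-\theta}$ into~\eqref{eq:qtdef} gives
\[
q_t(x,y)\ \le\ C\,t\int_0^\infty \frac{s^{-1-\theta}}{\mu_X(B(x,\sqrt s))}\;e^{-c_2 d^2/s}\,ds .
\]
I would split this integral at $s=d^2$. On $\{0<s\le d^2\}$ we have $\sqrt s\le d$, so the doubling property (equivalently the left-hand inequality in~\eqref{eq:volumegrowth}) gives $\mu_X(B(x,d))\le C\,(d/\sqrt s)^{Q}\,\mu_X(B(x,\sqrt s))$ for a suitable exponent $Q$; pulling $\mu_X(B(x,d))^{-1}$ out and substituting $u=d^2/s$ reduces the remainder to an integral of the form $\int_1^\infty u^{\gamma}e^{-c_2 u}\,du$, which is finite, while the powers of $d$ assemble to the power of $d(x,y)$ appearing in~\eqref{eq:qestabove}. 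On $\{s\ge d^2\}$ one instead uses $\mu_X(B(x,\sqrt s))\ge\mu_X(B(x,d))$ and $e^{-c_2 d^2/s}\le 1$, so this piece is at most $\frac{C\,t}{\mu_X(B(x,d))}\int_{d^2}^\infty s^{-1-\theta}\,ds$, which contributes the same power of $d$. Adding the two contributions proves~\eqref{eq:qestabove}.

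For the lower estimate~\eqref{eq:qestbeloq}, I would discard all of the (nonnegative) integrand in~\eqref{eq:qtdef} except the dyadic band $s\in[d^2,2d^2]$. On that band: $d^2/s\le 1$, so the lower sub-Gaussian bound gives $p_s(x,y)\ge\frac{c}{\mu_X(B(x,\sqrt s))}\,e^{-c_1 d^2/s}\ge\frac{c\,e^{-c_1}}{\mu_X(B(x,\sqrt s))}$; a single doubling step gives $\mu_X(B(x,\sqrt s))\le\mu_X(B(x,2d))\le C\,\mu_X(B(x,d))$; and since $s\asymp d^2$ and the hypothesis of the statement relating $d=d_X(x,y)$ to $t$ is exactly what places this band inside $\{s\ge t^{1/\theta}\}$, the reverse subordinator bound gives $\eta^\theta_t(s)\ge c\,t\,s^{-1-\theta}$, which on the band equals $t$ times a fixed power of $d$ up to constants. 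As the band has length $\asymp d^2$, multiplying these three estimates and collecting the powers of $d$ yields~\eqref{eq:qestbeloq}.

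The one substantive point — and the reason this needs an argument rather than just a citation to~\cite{G} — is the loss of the comparability of $\mu_X(B(x,r))$ with a fixed power of $r$ that is available under Ahlfors regularity. Because of this the two $s$-integrals above are not evaluable in closed form; one must interpose the doubling volume comparisons to rewrite $\mu_X(B(x,\sqrt s))$ as an explicit power of $\sqrt s/d$ times $\mu_X(B(x,d))$, separately in the regimes $\sqrt s\le d$ and $\sqrt s\ge d$, and then check that the resulting elementary integrals converge and scale in $d$ exactly as claimed. A secondary subtlety, already noted, is that the lower bound requires localising $s$ to a band where the Gaussian lower factor of $p_s$, the upper doubling control on $\mu_X(B(x,\sqrt s))$, and the tail lower bound on $\eta^\theta_t$ are simultaneously in force — and it is precisely the quantitative hypothesis relating $d(x,y)$ and $t$ that makes the band $s\asymp d(x,y)^2$ admissible.
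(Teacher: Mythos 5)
Your proof follows the same route as the paper's: substitute the sub-Gaussian two-sided bounds for $p_s$ and the power-law bounds for the subordinator density $\eta^\theta_t$ into the subordination formula \eqref{eq:qtdef}, split the $s$-integral at $s=d(x,y)^2$, and replace $\mu_X(B(x,\sqrt{s}))$ by $\mu_X(B(x,d(x,y)))$ times the appropriate ratio from \eqref{eq:volumegrowth} in each regime. The paper's version phrases the upper-bound tail as a geometric sum rather than as the exponential integral you obtain after the change of variables $u=d^2/s$, but that is the same computation.

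One thing worth flagging: you have in effect corrected an error in the stated lemma and its proof sketch. Your calculation produces $d(x,y)^{2\theta}$ in the denominator, the dominant band is $s\asymp d(x,y)^2$, and the lower bound requires $d(x,y)^{2\theta}\ge t$ (so that this band lies inside $\{s^\theta\ge t\}$). The lemma as printed writes $d(x,y)^{\theta}$ in both displays and conditions the lower bound on $d(x,y)^{\theta}\ge t$, and the proof sketch says the main contribution comes from $s\sim d(x,y)^{2\theta}$ with the lower-bound restriction to $s\in[d(x,y)^{\theta}/2,d(x,y)^{\theta}]$. These are internally inconsistent and inconsistent with the paper's own use of the estimate in Proposition~\ref{prop:besovenergy} (whose lower bound displays $d(x,y)^{2\theta}$) and with the Besov seminorm as written in Proposition~\ref{prop-Trace}. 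Your exponents and your choice of band are the correct ones. The one small imprecision on your side is the sentence asserting that the hypothesis ``as stated'' places $[d^2,2d^2]$ inside $\{s\ge t^{1/\theta}\}$; as literally stated ($d^\theta\ge t$) it does not, and you should say explicitly that the needed hypothesis is $d^{2\theta}\ge t$, which is what your argument actually uses.
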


\begin{proof}
From \cite[Equations 5.32 and 5.33]{G} we have for some constants $B_1,B_2$ 
\begin{eqnarray}
\eta_t^\theta(s) & \leq & B_1 \frac{t}{s^{1+\theta}} \ \ \ \text{ when } t,s>0 \label{eq:qtabove-use} \\
\eta_t^\theta(s) & \geq & B_2 \frac{t}{s^{1+\theta}} \ \ \ \text{ when } s^{\theta}\geq t > 0. \label{eq:qestbeloq-use}
\end{eqnarray}
The claim follows by substituting the bounds from Estimate \eqref{eq:volumegrowth} and Part~2 of 
Lemma~\ref{lem:heatkernel} to the Equation \eqref{eq:qtdef}. Indeed, both estimates follow by noting that the 
main contribution for the integral comes from $s \sim d(x,y)^{2\theta}$. For the upper bound, the remaining 
scales are bounded by a geometric sum and its largest term, and for the lower bound we can restrict to the 
interval with $s \in [d(x,y)^{\theta}/2,d(x,y)^{\theta}]$.
\end{proof}

Similarly, we can define 
\[
\mathcal{E}_{\theta,t}(f,f)=\frac{1}{t} \int_X (f-T_{t,\theta} f) f ~d\mu_X=\frac{1}{2t} \int_X \int_X |f(x)-f(y)|^2 q_t(x,y) d\mu_X d\mu_X
\]
 where $T_{t,\theta}$, $t>0$, is the semigroup related to $(-\Delta_X)^\theta$, given by
$T_{t,\theta}=\int_0^\infty e^{-t\lambda^\theta}\, dE_\lambda$.
We also note that $\mathcal{E}_{\theta,t}$ is monotone decreasing in $t$
and obtain a description of the domain of the fractional Laplacian by  
\[
\mathcal{D}((-\Delta_X)^{\frac{\theta}{2}})=\mathcal{F}_\theta=\{f \in L^2(X) \colon \lim_{t\to 0}\mathcal{E}_{t,\theta}(f,f)
=\sup_{t>0}\mathcal{E}_{t,\theta}(f,f) < \infty \}
\]
These claims follow from \cite[Section 4]{G}, when applied to the heat semigroup $T_{t,\theta}$.

Using this we can identify the Domain with the Besov space.

\begin{prop}\label{prop:besovenergy}
Let $f \in B_{2,2}^\theta(X) \cap L^2(X)$. Then there is a constant $C$ so that

$$\frac{1}{C}\mathcal{E}_{\theta}(f,f) \leq \|f\|_{B^\theta_{2,2}(X)}^2 \leq C \mathcal{E}_{\theta}(f,f)$$
and moreover $B_{2,2}^\theta(X) \cap L^2(X)= \mathcal{F}_\theta$. 
\end{prop}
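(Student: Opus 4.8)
The plan is to prove the two-sided comparison of $\mathcal{E}_\theta(f,f)$ with the Besov energy $\|f\|_{B^\theta_{2,2}(X)}^2$ by passing through the regularized energies $\mathcal{E}_{\theta,t}(f,f)$, and then to deduce the identification of the function classes as an immediate corollary. First I would recall from the spectral-theory subsection that
\[
\mathcal{E}_{\theta,t}(f,f)=\frac{1}{2t}\int_X\int_X |f(x)-f(y)|^2\, q_t(x,y)\, d\mu_X(x)\, d\mu_X(y),
\]
and that $\lim_{t\to 0^+}\mathcal{E}_{\theta,t}(f,f)=\sup_{t>0}\mathcal{E}_{\theta,t}(f,f)=\mathcal{E}_\theta(f,f)$, with the supremum finite precisely when $f\in\mathcal{F}_\theta$. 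Next I would insert the upper bound for $q_t$ from Lemma~\ref{lem:qtestimates}, namely $q_t(x,y)\le C_1\, t\, \big(d(x,y)^\theta\mu_X(B(x,d(x,y)))\big)^{-1}$, which holds for all $t>0$; the factor $t$ cancels the $1/t$ in front, giving
\[
\mathcal{E}_{\theta,t}(f,f)\le \frac{C_1}{2}\int_X\int_X\frac{|f(x)-f(y)|^2}{d(x,y)^\theta\mu_X(B(x,d(x,y)))}\, d\mu_X(x)\, d\mu_X(y)=\frac{C_1}{2}\|f\|_{B^\theta_{2,2}(X)}^2
\]
uniformly in $t$. Taking $t\to 0^+$ yields $\mathcal{E}_\theta(f,f)\le \frac{C_1}{2}\|f\|_{B^\theta_{2,2}(X)}^2$, which also shows $B^\theta_{2,2}(X)\cap L^2(X)\subset\mathcal{F}_\theta$.

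For the reverse inequality I would exploit the lower bound $q_t(x,y)\ge C_2\, t\,\big(d(x,y)^\theta\mu_X(B(x,d(x,y)))\big)^{-1}$, which however only holds on the region $\{(x,y):d(x,y)^\theta\ge t\}$. So for fixed $t>0$ I would restrict the double integral for $\mathcal{E}_{\theta,t}(f,f)$ to the set $\{d(x,y)\ge t^{1/\theta}\}$, obtaining
\[
\mathcal{E}_{\theta,t}(f,f)\ge \frac{C_2}{2}\int\int_{\{d(x,y)\ge t^{1/\theta}\}}\frac{|f(x)-f(y)|^2}{d(x,y)^\theta\mu_X(B(x,d(x,y)))}\, d\mu_X(x)\, d\mu_X(y).
\]
Letting $t\to 0^+$, the domain of integration exhausts $X\times X$, and by monotone convergence the right-hand side increases to $\frac{C_2}{2}\|f\|_{B^\theta_{2,2}(X)}^2$. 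Since the left-hand side converges to $\mathcal{E}_\theta(f,f)$ (and is monotone decreasing in $t$, so the limit is also the supremum), this gives $\|f\|_{B^\theta_{2,2}(X)}^2\le \frac{2}{C_2}\mathcal{E}_\theta(f,f)$, hence $\mathcal{F}_\theta\subset B^\theta_{2,2}(X)$ whenever the left side is finite. Combining the two inequalities yields the stated comparison and the equality $B^\theta_{2,2}(X)\cap L^2(X)=\mathcal{F}_\theta$ (recalling $\mathcal{F}_\theta\subset L^2(X)$ by definition).

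The main technical obstacle is the mismatch in the validity ranges of the two bounds on $q_t$: the upper bound is global but the lower bound is confined to $d(x,y)^\theta\ge t$, so the lower estimate has to be obtained by truncation followed by a limiting argument rather than a single inequality. One must also be slightly careful that the limit and supremum descriptions of $\mathcal{E}_\theta$ from the spectral subsection are invoked correctly — in particular that $\mathcal{E}_{\theta,t}(f,f)\uparrow\mathcal{E}_\theta(f,f)$ as $t\downarrow 0$ — and that the monotone convergence theorem applies to the truncated Besov integrals, which is immediate since the integrand is non-negative and the truncation sets increase. No delicate cancellation is involved; the only arithmetic is the cancellation of $t$ against $1/t$ that makes both bounds $t$-independent.
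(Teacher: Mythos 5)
Your proposal is correct and follows essentially the same route as the paper: bound $\mathcal{E}_{\theta,t}(f,f)$ from above using the global upper estimate on $q_t$ from Lemma~\ref{lem:qtestimates}, and from below by restricting to the region where the lower estimate is valid, then send $t\to 0^+$ using the monotonicity of $\mathcal{E}_{\theta,t}$ and monotone convergence on the truncated Besov integral. One small remark: the paper's proof restricts to $X^2_t=\{d(x,y)^{2\theta}\ge t\}$ while you (consistently with the stated hypothesis in Lemma~\ref{lem:qtestimates}) use $\{d(x,y)^\theta\ge t\}$; this is an internal inconsistency in the paper's exponents, but it is immaterial since either choice of truncation exhausts $X\times X$ as $t\to 0^+$.
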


\begin{proof} We follow the proof in \cite[Theorem 5.2]{G}. We have by Estimate~\eqref{eq:qestabove} that
\begin{eqnarray*}
\mathcal{E}_{\theta,t}(f,f) &=& \frac{1}{2t} \int_X \int_X |f(x)-f(y)|^2 q_t(x,y) d\mu_X d\mu_X \\
&\lesssim & \int_X \int_X\frac{1}{d(x,y)^{\theta} \mu_X(B(x,d(x,y)))} |f(x)-f(y)|^2 ~d\mu_X ~d\mu_X.
\end{eqnarray*}

This holds for any $f \in L^2(X)$, and all $t>0$. Thus, by the remark before the statement, we obtain 
$\frac{1}{C}\mathcal{E}_{\theta}(f,f) \leq \|f\|_{B^\theta_{2,2}}$ and that 
$B_{2,2}^\theta(X) \cap L^2(X) \subset \mathcal{F}_\theta$. 

Define $X^2_t = \{(x,y) \in X \times X : d(x,y)^{2\theta} \geq t > 0\}.$
The other direction follows by
\begin{eqnarray*} 
\mathcal{E}_{\theta,t}(f,f) &\gtrsim& \frac{1}{2t} \int_{X^2_t} |f(x)-f(y)|^2 q_t(x,y) d\mu_X d\mu_X \\
&\gtrsim& \int_{X^2_t}\frac{1}{d(x,y)^{2\theta} \mu_X(B(x,d(x,y)))} |f(x)-f(y)|^2 ~d\mu_X ~d\mu_X.
\end{eqnarray*}
Here, the comparability constants are independent of $t$. 
Sending $t\to 0$, we obtain

\[
\mathcal{E}_{\theta}(f,f)
\gtrsim \int_X \int_X\frac{1}{d(x,y)^{2\theta} \mu_X(B(x,d(x,y)))} |f(x)-f(y)|^2 ~d\mu_X ~d\mu_X,
\]
and thus  $C\mathcal{E}_{\theta}(f,f) \geq \|f\|_{B^\theta_{2,2}}$ for some constant $C$ and 
$\mathcal{F}_\theta  \subset B_{2,2}^\theta(X) \cap L^2(X)$. This concludes the proof.
\end{proof}

\subsection{Explicit Solution}

In this section we give a kernel for the solutions of the Dirichlet problem \eqref{eq:DP}. Let  $\Pa((x,y),z)$ be the kernel given by
\[
\Pa((x,y),z) = C_a y^{1-a}\int_0^\infty s^{\frac{a-3}{2}}e^{-\frac{y^2}{4s}} p_s(x,z) ~ds,
\]
where $p_s(\cdot, \cdot)$ is the heat kernel associated to $\Delta$ and 
\[
\frac{1}{C_a}=\int_0^\infty \tau^{\tfrac{a-3}{2}}e^{-1/4\tau}\, d\tau.
\]
For each $f\in B_{2,2}^{\theta}(X) \cap L^2(X)= \mathcal{F}_{\theta}(X)$ with $\theta=\tfrac{1-a}{2}$, $x \in X$, and $y>0$, we set 
\begin{equation}\label{eq:explicit}
\Pi_a f(x,y):=u(x,y) 
= \int_X f(z) \Pa((x,y),z) ~d\mu_X(z).
\end{equation}
In~\cite[Lemma 3.1]{BLS} and~\cite[(3.21)]{BGMN} this function is denoted by $U(x,y)$. In the following lemma we collect some 
properties for this function, which are mostly contained in \cite{BLS}, see also~\cite{BGMN}.

 Bochner differentiation is
the analog of weak derivative for Banach space-valued functions on intervals. A function $v:(0,\infty)\to N^{1,2}(X)$ 
is differentiable if there is a function $g:(0,\infty)\to N^{1,2}(X)$ such that whenever $\varphi:(0,\infty)\to\R$ is 
compactly supported and smooth, we have
the following integration by parts formula:
\[
\int_0^\infty \varphi^\prime(t) v(t)\, dt=-\int_0^\infty \varphi(t) g(t)\, dt,
\]
with the above integrals taken as the Bochner integrals, see~\cite{DU}. Such a function $g$ is said to be the 
derivative of $f$, denoted $\partial_t v$.

\begin{lemma}\label{lem:propertiesextension}
Suppose that $f\in B_{2,2}^{\theta}(X) \cap L^2(X)$ and $u(x,y)=\Pi_a f(x,y)$. 
\begin{enumerate}
\item The map $(x,y) \mapsto u(x,y)$ is continuous and $y\mapsto u(x,y)$ is smooth in $y$ for each $x\in X$.
\item The map $y\mapsto u(\cdot,y)$ is Bochner measurable and defines a three times continuously Bochner differentiable 
function from $(0,\infty)$ to $L^2(X)$.
\item $u(\cdot,y) \in \mathcal{D}(\Delta_X)$ for every $y\in (0,\infty)$ and 
\begin{equation}
\label{eq:EDP}
\begin{cases}
\Delta_a u =\left(\Delta_X+ \dfrac{\partial^2}{\partial y^2}+\dfrac{a}{y}\dfrac{\partial}{\partial y}\right) u=0  & \text{in} \quad  Z_+\\
u(\cdot,0)=f(\cdot).
\end{cases}
\end{equation}
\item $u(\cdot,y) \in N^{1,2}(X)$ for every $y\in(0,\infty)$ 
\item $u \in N^{1,2}_{loc}(Z_+)$.
\end{enumerate}
Here, by the statement $u(\cdot,0)=f(\cdot)$ we mean that $u(\cdot, y)\to f(\cdot)$ in $L^2(X)$ as $y\to 0^+$.
\end{lemma}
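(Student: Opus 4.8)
The plan is to read off all five properties from the representation $u(x,y)=\int_X f(z)\,\Pa((x,y),z)\,d\mu_X(z)$ together with the spectral-theoretic identity that makes $\Pi_a$ the Poisson-type operator for $\Delta_a$. First I would re-express $u$ spectrally: using the definition of $\Pa$ and the formula $q_s$ / the subordination identity $e^{-y\sqrt{\lambda}}=C_a y^{1-a}\int_0^\infty s^{(a-3)/2}e^{-y^2/4s}e^{-\lambda s}\,ds$ (the scalar Bochner subordination formula, valid since $a=1-2\theta\in(-1,1)$), one gets
\[
u(\cdot,y)=\int_0^\infty \phi_y(\lambda)\,dE_\lambda f,\qquad \phi_y(\lambda)= y^{1-a}C_a\int_0^\infty s^{(a-3)/2}e^{-y^2/4s}e^{-\lambda s}\,ds,
\]
so that $\phi_y(\lambda)$ solves the ODE $\phi_y''+\tfrac a y\phi_y'-\lambda\phi_y=0$ in $y>0$ with $\phi_0(\lambda)=1$. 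From this, Part~(3) is immediate: since $f\in\mathcal{F}_\theta$ and the bounded multiplier $\lambda\mapsto\lambda\phi_y(\lambda)^2$ has finite $\int_0^\infty\!\lambda^2\phi_y(\lambda)^2\,dE_\lambda(f,f)$ for each fixed $y>0$ (the Gaussian factor $e^{-y^2/4s}$ forces polynomial decay in $\lambda$ after integrating in $s$), we have $u(\cdot,y)\in\mathcal D(\Delta_X)$, and differentiating under the spectral integral in $y$ (justified by dominated convergence for Bochner integrals, using the explicit $s$-integral bounds) gives $\Delta_a u=\int_0^\infty(\lambda\phi_y+\phi_y''+\tfrac ay\phi_y')\,dE_\lambda f=0$, while $u(\cdot,y)\to f$ in $L^2$ as $y\to0^+$ because $\phi_y(\lambda)\to1$ boundedly. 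Part~(4) follows similarly: $u(\cdot,y)\in N^{1,2}(X)=\mathcal{F}_X$ because $\int_0^\infty\lambda\,\phi_y(\lambda)^2\,dE_\lambda(f,f)\le\int_0^\infty\lambda^\theta\,dE_\lambda(f,f)<\infty$ once we check $\lambda\phi_y(\lambda)^2\lesssim\lambda^\theta$ uniformly (again from the $s$-integral, balancing $s\sim y^2$ against $s\sim\lambda^{-1}$), and Part~(2) — Bochner measurability and three-times continuous Bochner differentiability of $y\mapsto u(\cdot,y)\in L^2(X)$ — comes from differentiating the spectral integral three times, the $k$-th derivative being $\int_0^\infty\partial_y^k\phi_y(\lambda)\,dE_\lambda f$ with $\partial_y^k\phi_y(\lambda)$ controlled by the same Gaussian-tail estimates.

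Next, Part~(1). Pointwise continuity of $(x,y)\mapsto u(x,y)$ and smoothness in $y$ for fixed $x$ I would get directly from the kernel $\Pa$: the sub-Gaussian heat kernel bounds of Lemma~\ref{lem:heatkernel} give, after integrating against $s^{(a-3)/2}e^{-y^2/4s}$, an estimate $\Pa((x,y),z)\lesssim \Pa$-type bound comparable to $y/(d(x,z)^\theta+y^\theta)\cdot(\text{volume factor})$ away from the diagonal, with enough integrability in $z$ against $f\in L^2(X)\subset L^1_{loc}$ (using doubling and the volume growth \eqref{eq:volumegrowth}) to apply dominated convergence and differentiation under the integral sign in the $z$-integral. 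Continuity in $x$ uses continuity of $z\mapsto p_s(x,z)$ and a uniform integrable majorant in a neighbourhood; smoothness in $y$ uses that $y\mapsto \Pa((x,y),z)$ is a convergent integral of smooth functions with all $y$-derivatives dominated locally uniformly. This is essentially \cite[Lemma~3.1]{BLS}, so I would cite that and only indicate the doubling-space adjustments.

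Finally Part~(5), $u\in N^{1,2}_{loc}(Z_+)$, is the step I expect to be the main obstacle, since it requires controlling the full gradient on $Z$ (both the $X$-direction via $D_X$ and the $y$-direction) rather than slicewise bounds. The plan is: fix a ball $B\subset\subset Z_+$, say contained in $X\times[\alpha,\beta]$ with $0<\alpha<\beta$; then $\int_\alpha^\beta\|D_X u(\cdot,y)\|_{L^2(X)}^2\,dy=\int_\alpha^\beta\mathcal E_X(u(\cdot,y),u(\cdot,y))\,dy\le\int_\alpha^\beta\int_0^\infty\lambda\phi_y(\lambda)^2\,dE_\lambda(f,f)\,dy$, which is finite by Part~(4) plus the uniform-in-$y\in[\alpha,\beta]$ bound $\lambda\phi_y(\lambda)^2\lesssim_\alpha\lambda^\theta$; and $\int_X\int_\alpha^\beta|\partial_y u(x,y)|^2\,dy\,d\mu_X(x)=\int_\alpha^\beta\int_0^\infty(\partial_y\phi_y(\lambda))^2\,dE_\lambda(f,f)\,dy$, finite because $(\partial_y\phi_y(\lambda))^2\lesssim_\alpha\lambda^\theta$ uniformly on $[\alpha,\beta]$ (the Gaussian tail gives the crucial extra power). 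Combining these with the product differential structure on $Z$ (Theorem~\ref{diffproduct}), which identifies $|D_Z u|^2\approx|D_X u|^2+|\partial_y u|^2$ $\mu_a$-a.e., and noting $|y|^a$ is bounded above and below on $[\alpha,\beta]$, yields $u\in N^{1,2}(B,\mu_a)$; since such $B$ exhaust $Z_+$, $u\in N^{1,2}_{loc}(Z_+)$. The delicate point throughout is the family of scalar estimates $\lambda^k|\partial_y^j\phi_y(\lambda)|\lesssim 1+\lambda^{(j+k)/?}$ and in particular $\lambda\,\phi_y(\lambda)^2\lesssim\lambda^\theta$ uniformly on compact $y$-intervals, which I would prove once as a self-contained calculus lemma from the substitution $r=\lambda s$ in the defining $s$-integral, and then invoke repeatedly.
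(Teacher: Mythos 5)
Your spectral unpacking of Parts (1)--(4) via $u(\cdot,y)=\int_0^\infty\phi_y(\lambda)\,dE_\lambda f$ is a reasonable expansion of the content of \cite[Lemma~3.1]{BLS}, which is what the paper simply cites; the two are essentially the same route. (Two small slips there: in the display for $\Delta_a u$ you wrote $\lambda\phi_y$ where the sign convention $-\Delta_X=\int_0^\infty\lambda\,dE_\lambda$ gives $-\lambda\phi_y$, consistent with the ODE you stated; and the bound $\lambda\phi_y(\lambda)^2\lesssim\lambda^\theta$ is not uniform in $y>0$ near $y=0$ — it is uniform on $y\ge\alpha>0$, which is all you use.)

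The genuine gap is in Part~(5). You compute $\int_\alpha^\beta\int_X\bigl(|D_Xu|^2+|\partial_yu|^2\bigr)\,d\mu_X\,dy<\infty$ and then conclude $u\in N^{1,2}(B,\mu_a)$ by invoking ``the product differential structure (Theorem~\ref{diffproduct}), which identifies $|D_Zu|^2\approx|D_Xu|^2+|\partial_yu|^2$.'' That citation is misplaced: Theorem~\ref{diffproduct} is a statement about Cheeger-differentiating functions already known to be Lipschitz (or in $N^{1,2}_{loc}$); it cannot be used to \emph{establish} membership in $N^{1,2}_{loc}(Z_+)$. The tool you actually need is Lemma~\ref{lem:tensor-N1p}, and its hypothesis is nontrivial precisely where you skip: it requires that $u$ have \emph{measurable}, $L^2_{loc}(Z,\mu_a)$-integrable (weak) upper gradients separately in the $\R$ and $X$ directions. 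For the $\R$-direction, $\partial_y u$ is continuous and so jointly measurable, but for the $X$-direction it is not at all automatic that a choice of slicewise upper gradient $g_{u(\cdot,y)}(x)$ (or the minimal one) can be made jointly measurable in $(x,y)$ — this is exactly the issue flagged in the remark following Lemma~\ref{lem:tensor-N1p}. The paper handles it constructively: it discretizes in $y$ by $u_k(x,y)=u(x,\lfloor yk\rfloor/k)$, for which the $X$-slicewise minimal upper gradient is piecewise constant in $y$ and hence measurable, shows these are uniformly bounded in $L^2(X\times[T_1,T_2],\mu_a)$ using continuity of $y\mapsto\int_X\langle D_Xu,D_Xu\rangle\,d\mu_X$, and passes to the limit via Mazur's lemma to extract a genuine measurable slicewise weak upper gradient for $u$. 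You would either need to reproduce something like this, or argue directly that $y\mapsto u(\cdot,y)$ is continuous from $[\alpha,\beta]$ into $N^{1,2}(X)$ (which your spectral bounds do give, by dominated convergence) and then use that $D_X$ is bounded on $N^{1,2}(X)$ to get joint measurability of $(x,y)\mapsto D_Xu(x,y)$; as written, neither argument is present, so the final step of Part~(5) does not go through.
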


\begin{proof}
The continuity and differentiality follow from the definition together with Lemma \ref{lem:heatkernel} and dominated convergence. 

The second and third claims follow from~\cite[Lemma 3.1]{BLS}. Indeed, in Part~3 of the proof of~\cite[Lemma~3.1]{BLS} it is
shown that $u(\cdot, 0)\to f(\cdot)$ in $L^2(X)$ as $y\to 0^+$.
There the terminology is not explained in depth, but the techniques of dominated convergence 
yield that the map from $[0,\infty)$ to $L^2$  given by $y\mapsto u(\cdot,y)$ is continuous, and thus 
Bochner measurable. In  \cite{BLS} 
only the first two derivatives are explicitly computed, but further derivatives are simple to compute by the same techniques.

Since $u(x,y)$ is continuous and $u(\cdot,y)  \in \mathcal{D}(\Delta_X) \subset \mathcal{F}_X$, we have 
$u(\cdot,y) \in N^{1,2}(X)$ for every $y\in (0,\infty)$. 

By Remark \ref{rem:diff-v-BochDer} below we have that the pointwise $y$-derivatives and the Bochner derivatives coincide. 
Now, for $[T_1,T_2]$  with $0<T_1 <T_2$ using the equation from the third claim and integration by parts (using the 
Bochner derivatives) and the third claim of the lemma, we get
\begin{align}
\int_{X\times[T_1,T_2]}& \int_X (\partial_y u)^2 + \langle D_X u, D_X u \rangle d\mu_a\\
&= \int_{T_1}^{T_2} \int_X (\partial_y u)^2 + \langle -\Delta_X u, u \rangle y^a dy d\mu_X \nonumber \\
&= \int_{T_1}^{T_2} \int_X \partial_y (y^au\,\partial_y u) -u\, \partial_y(y^a \partial_y u) - \langle \Delta_X u, u \rangle y^a dy d\mu_X \nonumber \\
&= \int_{T_1}^{T_2} \int_X \partial_y (y^au\,\partial_y u) dy d\mu_X \nonumber \\
&= \int_X y^{T_2}\partial_y u (x,T_2) u(x,T_2)-y^{T_1}\partial_y u (x,T_1) u(x,T_1) d\mu_X. \label{eq:integratiobyparts}
\end{align}

Since $\partial_y u(\cdot,y), u(\cdot,y)\in L^{2}(X)$ by \cite{BLS}, we have 
$(\partial_y u)^2 + \langle D_X u, D_X u\rangle\in L^2_{\text{loc}}(Z_+)$.
Further, we have that for every $y>0$ by the third part, 
\[
\int_X \langle D_X u, D_X u \rangle d\mu_a = \int_X (-\Delta_X u) u\, d\mu_a = \int_X u\, (\partial_y^2 +a^{-1}\partial_y)u\, d\mu_a.
\] 
Therefore $y\to \int_X \langle D_X u(\cdot, y), D_X u(\cdot,y) \rangle d\mu_a$
is now differentiable in $y$, and moreover, is continuous, since $y\to u(\cdot,y)$ is of class $C^3$ in the Bochner sense. 

In order to conclude from these the final claim that $u \in N^{1,2}_{loc}(Z_+)$, we would like to apply 
Lemma~\ref{lem:tensor-N1p}. However, in order to do this, we need that $u$ has measurable and 
$L^2_{loc}(Z)$-integrable (weak) upper gradients in the $\R$ and the $X$ directions. Note, that the lemma is 
applied with $Y=[T_1,T_2]$ with the weighted measure $y^ad\lambda$ and $X$ as is. For us $\partial_y u$ 
is continuous, and thus measurable.

Since $y\to u(\cdot,y)$ is a continuous curve in $L^2$, we have that if we define, for $k\in\N$, 
$u_k(x,y)=u(x,\lfloor y\cdot k \rfloor / k)$, that $u_k(\cdot,y) \to u(\cdot,y)$ as $k\to \infty$ in $L^2(X)$ for each 
$y>0$. We have that $u_k(\cdot,y) \in N^{1,2}(X)$, and $u_k(\cdot,y)$ is constant for $y\in [l/k,(l+1)/k)$ and any 
$l\in \N$. Thus, $u_k$ has a piecewise constant, and thus measurable, minimal upper gradient in the 
$X$-direction. Call it $g_{u_k^x}$. Now, by construction of the Cheeger differential structure,

\[
\int_{T_1}^{T_2} \int_X g_{u_k^x}^2 d\mu_X y^adt \leq C \int_{T_1}^{T_2} \int_X \langle D_X u_k, D_X u_k \rangle.
\]
Since $y\to \int_X \langle D_X u, D_X u \rangle d\mu_a$ is continuous, we get that $g_{u_k^x}$ is 
uniformly bounded in $L^2(X\times [T_1,T_2],\mu_a)$. By taking convex combinations of tails of $u_k$, 
we get a sequence of $v_k \to u$ in $L^2(X\times [T_1,T_2],\mu_a)$ where $g_{v_k^x}$ converges in 
$L^2(X\times [T_1,T_2],\mu_a)$ to a function $g\in L^2(X\times [T_1,T_2],\mu_a)$. Then, finally, as 
$v_k\to u$ in $L^2$, we obtain for almost every $y>0$, that $g(\cdot,y)$ is $2$-weak upper gradient for $u$.
\end{proof}

\begin{remark}\label{rem:diff-v-BochDer}
Note that if for $\mu_X$-a.e.~$x\in X$ the real-valued function $y\mapsto u(x,y)$ is absolutely continuous on $[0,\infty)$,
then $\partial_y u(x,y)$ exists for almost every $y$ such that the above integration by parts formula holds. It follows then
that the Bochner derivative coincides with this real derivative.
\end{remark}

Finally, for functions in the Besov class we can strengthen and show that the extension $u \in D^{1,2}(Z_+)$.

\begin{prop}\label{prop:trace-energy} 
If $f\in B^{\theta}_{2,2}(X)\cap L^2(X)$, then $u=\Pi_a f \in D^{1,2}(Z_+)$, and
\[\|u\|_{D^{1,2}(Z_+)}=\int_{0}^{\infty} \int_X (\partial_y u)^2 + \langle D_X u, D_X u \rangle d\mu_a 
= \dfrac{2^{2\theta -1} \Gamma(\theta)}{\Gamma(1-\theta)} \mathcal{E}_\theta(f,f).\]
\end{prop}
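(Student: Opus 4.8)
The plan is to compute the Dirichlet energy of $u=\Pi_a f$ on $Z_+$ by combining the integration-by-parts identity already established in the proof of Lemma~\ref{lem:propertiesextension} with the spectral representation of $f$. First I would start from Equation~\eqref{eq:integratiobyparts}: for $0<T_1<T_2$,
\[
\int_{X\times[T_1,T_2]}\!\!\big((\partial_y u)^2+\langle D_X u,D_X u\rangle\big)\,d\mu_a
=\int_X\!\big(T_2^{\,a}\,\partial_y u(x,T_2)\,u(x,T_2)-T_1^{\,a}\,\partial_y u(x,T_1)\,u(x,T_1)\big)\,d\mu_X.
\]
So the whole computation reduces to understanding the boundary terms $y^a\,\partial_y u(\cdot,y)\,u(\cdot,y)$ integrated over $X$, in the limits $y\to\infty$ and $y\to 0^+$. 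The key is that the explicit kernel $\Pa$ diagonalizes in the spectral resolution: writing $f=\int_0^\infty dE_\lambda f$, one has from the definition of $\Pa$ (or directly from~\cite[Lemma~3.1]{BLS}) a representation $u(\cdot,y)=\int_0^\infty \phi_\lambda(y)\,dE_\lambda f$, where $\phi_\lambda$ solves the ODE $\phi_\lambda''+\tfrac{a}{y}\phi_\lambda'-\lambda\phi_\lambda=0$ with $\phi_\lambda(0)=1$ and decay at infinity; this is a Bessel-type equation whose relevant solution is, up to normalization, $\phi_\lambda(y)=c_{a}(\sqrt\lambda\,y)^{\theta}K_{\theta}(\sqrt\lambda\,y)$ with $\theta=\tfrac{1-a}{2}$.

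Next I would use Parseval/orthogonality of the spectral measure to turn the $X$-integral of the boundary term into a scalar spectral integral:
\[
\int_X T^{\,a}\,\partial_y u(x,T)\,u(x,T)\,d\mu_X
=\int_0^\infty T^{\,a}\,\phi_\lambda'(T)\,\phi_\lambda(T)\,dE_\lambda(f,f).
\]
As $T_2\to\infty$, $\phi_\lambda(T_2)\to 0$ exponentially (for $\lambda>0$), so that term vanishes. As $T_1\to 0^+$, the Bessel asymptotics give $y^a\phi_\lambda'(y)\phi_\lambda(y)\to -\,c_a'\,\lambda^{\theta}$ for an explicit constant, the constant being exactly $2^{2\theta-1}\Gamma(\theta)/\Gamma(1-\theta)$ after unwinding the normalization $C_a$. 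Hence, using monotone convergence (the integrand $(\partial_y u)^2+\langle D_Xu,D_Xu\rangle$ is nonnegative) together with dominated convergence on the spectral side,
\[
\int_0^\infty\!\!\int_X\big((\partial_y u)^2+\langle D_X u,D_X u\rangle\big)\,d\mu_a
=\frac{2^{2\theta-1}\Gamma(\theta)}{\Gamma(1-\theta)}\int_0^\infty \lambda^{\theta}\,dE_\lambda(f,f)
=\frac{2^{2\theta-1}\Gamma(\theta)}{\Gamma(1-\theta)}\,\mathcal{E}_\theta(f,f),
\]
where the last equality is the spectral description of $\mathcal{E}_\theta$ from Subsection~\ref{sec:background}. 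Finiteness of the right-hand side, which holds because $f\in\mathcal F_\theta=B^\theta_{2,2}(X)\cap L^2(X)$ by Proposition~\ref{prop:besovenergy}, then gives $u\in D^{1,2}(Z_+)$ (the weak upper gradient in $Z_+$ having been produced already in Lemma~\ref{lem:propertiesextension}).

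The main obstacle I anticipate is the careful justification of the boundary-term limits — in particular interchanging the $X$-integration (or spectral integration) with the limit $y\to 0^+$, and pinning down the exact constant. One must verify that $y^a\,\phi_\lambda'(y)\,\phi_\lambda(y)$ is monotone (or suitably dominated) in $y$ uniformly in $\lambda$ so that the limit passes inside $\int_0^\infty(\cdot)\,dE_\lambda(f,f)$; the monotonicity of $y\mapsto\int_X\langle D_Xu,D_Xu\rangle\,d\mu_a$ noted in Lemma~\ref{lem:propertiesextension} plus the computed ODE for $\phi_\lambda$ should supply this. Tracking the Gamma-function constant is purely a matter of the Bessel-function normalization constants $C_a$ and $1/C_a=\int_0^\infty\tau^{(a-3)/2}e^{-1/4\tau}\,d\tau$, together with the identity $K_\theta(z)\sim\tfrac12\Gamma(\theta)(z/2)^{-\theta}$ as $z\to 0$; I would record this as a short lemma or cite~\cite{BLS,BGMN}, where the same constant appears, rather than grinding it out.
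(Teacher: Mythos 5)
Your proposal is correct and follows the same strategy the paper uses: start from the integration-by-parts identity~\eqref{eq:integratiobyparts}, send $T_1\to 0^+$ and $T_2\to\infty$, and identify the boundary terms. Where you differ is in how you handle those boundary terms. The paper first passes to the dense subspace $\mathcal{D}((-\Delta_X)^\theta)\subset\mathcal{F}_\theta$, then cites~\cite[Lemma~3.2]{BLS} for the key limit $\lim_{y\to 0^+}-\tfrac{2^{2\theta-1}\Gamma(\theta)}{\Gamma(1-\theta)}y^a\partial_y u=(-\Delta_X)^\theta f$, and separately checks $\lim_{y\to\infty}-y^a\partial_y u=0$ weakly in $L^2(X)$ via the explicit kernel formula and dominated convergence. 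You instead unpack what is happening inside~\cite{BLS}: write $u(\cdot,y)=\int_0^\infty\phi_\lambda(y)\,dE_\lambda f$, use Parseval to reduce the boundary term to the scalar spectral integral $\int_0^\infty y^a\phi_\lambda'(y)\phi_\lambda(y)\,dE_\lambda(f,f)$, and then use Bessel asymptotics to evaluate the limits. A genuine advantage of your route is that the ODE for $\phi_\lambda$ gives $\frac{d}{dy}\bigl[-y^a\phi_\lambda'(y)\phi_\lambda(y)\bigr]=-y^a\bigl[(\phi_\lambda')^2+\lambda\phi_\lambda^2\bigr]\le 0$, so $-y^a\phi_\lambda'\phi_\lambda$ is monotone increasing as $y\to 0^+$ for each $\lambda$, and the monotone convergence theorem then justifies passing the limit inside the spectral integral directly for all $f\in\mathcal F_\theta$; this sidesteps the density reduction that the paper invokes but leaves somewhat terse. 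Your ``main obstacle'' worry is thus resolved by the monotonicity computation, and you need not verify uniformity in $\lambda$. The only thing you should be careful to add is the justification of the Parseval-type identity $\int_X y^a\partial_y u(x,y)\,u(x,y)\,d\mu_X=\int_0^\infty y^a\phi_\lambda'(y)\phi_\lambda(y)\,dE_\lambda(f,f)$ for general $f\in\mathcal F_\theta$ and fixed $y>0$; this follows because $\phi_\lambda(y)$ and $\phi_\lambda'(y)$ are bounded uniformly in $\lambda$ for each fixed $y>0$ (indeed they decay in $\lambda$), but it should be said.
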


\begin{proof} By Proposition \ref{prop:besovenergy} we have that $B^{\theta}_{2,2}(X)\cap L^2(X)=\mathcal{F}_\theta$ 
with comparable norms. It is easy to show that $\mathcal{D}((-\Delta_X)^{\theta})$ is dense in $\mathcal{F}_\theta$ 
in the norm $\mathcal{E}_\theta(f,f)$, and thus we may assume that $f \in \mathcal{D}((-\Delta_X)^{\theta})$.
From \cite[Lemma 3.2]{BLS}, we have that 
\[
\lim_{y\to 0^+}- \dfrac{2^{2\theta -1} \Gamma(\theta)}{\Gamma(1-\theta)}y^a\partial_y u = (-\Delta_X)^\theta f,
\]
where $\Gamma(\cdot)$ is the standard gamma function.

Moreover, $\lim_{y\to\infty} -y^a\partial_y u = 0$ weakly in $L^2(X)$. Indeed, for any $v\in L^2(X)$, by the 
proof of~\cite[Lemma 3.1]{BLS} we have that
\[
\int_X -y^a\partial_y u \, v \,d\mu_X  
= \frac{1}{\Gamma(s)} \int_0^\infty  \frac{y^{1+a}e^{-\frac{y^2}{4t}}}{2t} \int_X P_t (-\Delta_X)^\theta f v d\mu_X \frac{dt}{t^{1-\theta}}.
\]
Then, sending $y\to \infty$ easily proves the claim together with dominated convergence, and since 
$\|\int_X P_t (-\Delta_X)^\theta f v \,d\mu_X \| \leq \|(-\Delta_X)^\theta f\|_{L^2(X)} \|v\|_{L^2(X)}$.

 The claim then follows from these combined with the calculation at the end of the proof of 
 Lemma~\ref{lem:propertiesextension} and  sending $T_1\to 0$ and $T_2 \to \infty$.
\end{proof}

\section{Tensorization}

In this section we collect some results regarding tensorization that will be useful in working with $Z=X\times\R$.
Recall the discussion from Remark~\ref{remk:Cartesian-PI} above. We treat a more general Cartesian product here
by considering the product of two doubling metric measure spaces
$(X,d_X,\mu_X)$, $(Y,d_Y,\mu_Y)$, both supporting a $2$-Poincar\'e 
inequality. The product $Z=X\times Y$ is equipped with the metric $d_Z$ given by
\[
d_Z((x_1,y_1),(x_2,y_2))=\sqrt{d_X(x_1,x_2)^2+d_Y(y_1,y_2)^2}
\]
and the measure $\mu_Z$ given by $d\mu_Z(x,y)=d\mu_X(x)\, d\mu_Y(y)$. While in our application we will consider
$Y=\R$ with $d\mu_Y(y)=y^a\, dy$, we formulate the results in the section in this generality as they are of independent 
interest and add to the results in~\cite{APS}.

\subsection{Tensorization of Newton-Sobolev Energies}

\begin{lemma}\label{lem:tensor-N1p}
If $f\in L^2(Z)$, then 
there is a modification of $f$ on a $\mu_X\times\mu_Y$-measure zero subset of $Z$ that 
is in $N^{1,2}(Z)$ if and only if, after modification on a set of $\mu_X\times\mu_Y$-measure zero if necessary,
we have 
\begin{enumerate}
\item for $\mu_X$-almost every $x\in X$ we have that $f^x:=f(x,\cdot)\in N^{1,2}(Y)$ with 
$(x,y)\mapsto g_{f^x}(y)\in L^2(Z)$ where $g_{f^x}$ is an upper gradient of $f^x$ in $X$, and
\item for $\mu_Y$-almost every $y\in Y$ we have that $f^y:=f(\cdot, y)\in N^{1,2}(X)$ with
$(x,y)\mapsto g_{f^y}(x)\in L^2(Z)$.
\end{enumerate}
\end{lemma}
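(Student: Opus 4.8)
The plan is to prove the two directions of the equivalence by reducing, in both cases, to the characterization of Newton--Sobolev functions via absolute continuity along almost every curve (the ACC$_p$ property) together with a Fubini-type argument on the product $Z = X \times Y$. First I would fix good representatives: since $f \in L^2(Z)$, after a modification on a $\mu_Z$-null set we may assume $f$ is Borel, and I would use that $\mu_Z$-null sets project to sets that are null in $X$ for $\mu_Y$-a.e.\ slice and null in $Y$ for $\mu_X$-a.e.\ slice, so the slice conditions (1) and (2) are insensitive to such modifications. The key structural fact I would invoke is that the minimal $2$-weak upper gradient is ``local'' and, on a product, behaves coordinatewise: if $f \in N^{1,2}(Z)$ then for $\mu_Y$-a.e.\ $y$ the slice $f^y = f(\cdot,y)$ lies in $N^{1,2}(X)$ with a slicewise upper gradient dominated by $g_f(\cdot,y)$, and symmetrically in the other variable. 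Conversely, given the two slice conditions, one builds an upper gradient on $Z$ of the form $g(x,y) = g_{f^y}(x) + g_{f^x}(y)$ (or the $\ell^2$-combination) and checks it is a genuine $2$-weak upper gradient of $f$ on $Z$.

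For the forward direction, I would start from $f \in N^{1,2}(Z)$ with minimal $2$-weak upper gradient $g_f \in L^2(Z)$. Horizontal curves in $Z$ of the form $t \mapsto (\gamma(t), y)$ with $\gamma$ a curve in $X$ realize, via a modulus estimate on $X \times Y$, that for $\mu_Y$-a.e.\ fixed $y$ the family of $X$-curves along which $f^y$ fails the upper gradient inequality with $g_f(\cdot,y)$ has zero $2$-modulus in $X$; this is a standard ``product modulus'' computation (cf.\ the Fuglede-type lemma), giving $f^y \in N^{1,2}(X)$ with upper gradient $g_f(\cdot,y)$, and $\int_X \int_Y g_f(x,y)^2 \, d\mu_Y \, d\mu_X < \infty$ by Fubini, so $(x,y) \mapsto g_{f^y}(x) \in L^2(Z)$. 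The vertical slices are handled identically. This yields conditions (1) and (2) without any further modification.

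For the reverse direction, assuming (1) and (2), the main work is to promote the slicewise regularity to genuine membership in $N^{1,2}(Z)$. I would define $g(x,y) := g_{f^y}(x) + g_{f^x}(y)$, which is in $L^2(Z)$ by hypothesis, and verify that $g$ is a $2$-weak upper gradient of (a representative of) $f$ on $Z$. For a rectifiable curve $\gamma$ in $Z$, decompose its arc-length integral using the coordinate projections $\gamma = (\gamma_X, \gamma_Y)$; the subtlety is that a general curve in $Z$ is not a concatenation of a horizontal and a vertical piece, so one must approximate $\gamma$ by piecewise ``axis-parallel'' curves, or invoke that the $2$-modulus of curves in $Z$ that are not suitably rectifiable in each coordinate is zero, and combine with the absolute continuity of $f$ along $\mu_Y$-a.e.\ horizontal line and $\mu_X$-a.e.\ vertical line. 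Then a telescoping/Fubini argument shows $f$ (in the precise representative obtained by choosing the ACC slices consistently) satisfies the upper gradient inequality off a curve family of zero $2$-modulus, hence $f \in N^{1,2}(Z)$, and the required modification on a $\mu_Z$-null set is exactly the reconciliation of the ``slicewise good'' representatives in the two coordinates.

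I expect the main obstacle to be the reverse direction, specifically the passage from axis-parallel curves to arbitrary rectifiable curves in the product: one needs a careful modulus estimate showing that the exceptional curves (those not absolutely continuous after projection, or those whose projections hit the respective exceptional slices) form a $2$-null family in $Z$, and one needs to fix the representative of $f$ so that it is simultaneously the ACC representative along $X$-slices and along $Y$-slices. This is where the $A_2$/doubling and Poincaré hypotheses on $X$ and $Y$ enter, ensuring the relevant curve families are rich enough and the modulus estimates go through; the rest is bookkeeping with Fubini's theorem.
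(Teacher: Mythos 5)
Your forward direction (from $f\in N^{1,2}(Z)$ to the slicewise conditions) is fine, though you make it harder than necessary: since a function in $N^{1,2}(Z)$ has a \emph{genuine} upper gradient $g\in L^2(Z)$, not merely a weak one, one can slice directly --- curves $t\mapsto(x,\gamma(t))$ in $Y$ are curves in $Z$, so $g(x,\cdot)$ is an actual upper gradient of $f^x$ on every rectifiable $Y$-curve whenever $g(x,\cdot)\in L^2(Y)$, and Fubini finishes the job with no Fuglede-type modulus estimate. The paper's converse uses exactly this.

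The reverse direction is where your proposal diverges from the paper and has a genuine gap, which you partly acknowledge but do not fill. You propose to show directly that $g(x,y)=g_{f^y}(x)+g_{f^x}(y)$ is a $2$-weak upper gradient of $f$ on $Z$ by testing against curves in $Z$, approximating them by axis-parallel polygonal pieces, and arguing that the exceptional curve family (those whose projections land on bad slices, or that do not decompose nicely) has zero $2$-modulus. That final modulus estimate is the crux, and it is not at all clear it can be run: a generic rectifiable curve in $Z$ need not be absolutely continuous coordinate-by-coordinate in a way that can be bounded using only slicewise information, and the device of choosing a ``consistent ACC representative'' along both families of slices is itself nontrivial --- you would have to show such a representative exists. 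The paper explicitly remarks that the lemma is not known to hold without the $2$-Poincar\'e inequality on the factors (it cites the discussion in Gigli--Rigoni), which is strong evidence that a purely curve-theoretic product-modulus argument, which uses no Poincar\'e input, cannot work. The paper circumvents the curve decomposition entirely: it switches to the biLipschitz-equivalent max metric, verifies that the pair $(f,g)$ with $g=g_{f^x}+g_{f^y}$ satisfies the $2$-Poincar\'e inequality on product balls (by splitting the double mean-oscillation integral into an $X$-difference and a $Y$-difference and applying the factor Poincar\'e inequalities), and then invokes the deep characterization \cite[Theorem~10.3.4]{HKSTbook} that a pair satisfying a $p$-Poincar\'e inequality in a doubling space yields, after modification on a null set, a Newton--Sobolev function with $Cg$ as a weak upper gradient. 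This is precisely where the Poincar\'e hypotheses on $X$ and $Y$ enter --- not as a source of ``rich curve families'' for a modulus estimate, as you suggest, but as the input to a Poincar\'e-type characterization of $N^{1,2}$. The null-set reconciliation you worry about also falls out cleanly: once $f_0\in N^{1,2}(Z)$ with $f_0=f$ $\mu_Z$-a.e.\ is in hand, Fubini yields that $f_0$ and $f$ have matching slices for a.e.\ slice.

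In short: keep your converse direction (or simplify it using genuine upper gradients), but the hard direction should go through the Poincar\'e inequality for the pair $(f,g_{f^x}+g_{f^y})$ and the $N^{1,2}$ characterization via Poincar\'e, rather than a curve-by-curve modulus argument.
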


If either $(X,\mu_X)$ or $(Y,\mu_Y)$ does not support s $2$-Poincar\'e inequality, we do not know the validity
of the conclusion in the above lemma, see for example the discussion in~\cite{GR}.

\begin{proof}
Since the two metrics $d_Z$ and $\max\{d_X,d_Y\}$ are biLipschitz equivalent, 
and the space $N^{1,2}(Z)$ is biLipschitz invariant,
in this proof we will assume that
\[d_Z((x_1,y_1),(x_2,y_2))=\max\{d_X(x_1,x_2),d_Y(y_1,y_2)\}.\] For $r>0$ and $(x_0,y_0)\in Z$, consider 
$B_Z:=B_Z((x_0,y_0),r)=B_X(x_0,r)\times B_Y(y_0,r)$. Suppose that $f$ satisfies the hypotheses of the two conditions 
of the lemma. Then by the respective Poincar\'e inequalities of $X$ and $Y$, we obtain
\begin{align*}
\vint_{B_Z}&\vint_{B_Z}|f(x_1,y_1)-f(x_2,y_2)|\, d\mu_Z(x_1,y_1)d\mu_Z(x_2,y_2)\\
  &=\vint_{B_X}\vint_{B_X}\vint_{B_Y}\vint_{B_Y}|f(x_1,y_1)-f(x_2,y_2)|\, d\mu_Y(y_2)d\mu_Y(y_1)\, d\mu_X(x_2)d\mu_X(x_1)\\
  &\le \vint_{B_X}\vint_{B_X}\vint_{B_Y}\vint_{B_Y}|f(x_1,y_1)-f(x_1,y_2)| \\
  &\ \ \ \ \ \ \ \ \ \  + \ |f(x_1,y_2)-f(x_2,y_2)|\,\,d\mu_Y(y_2) d\mu_Y(y_1) d\mu_X(x_2)d\mu_X(x_1)\\
  &\le 2C_Y r \vint_{B_X}\left(\vint_{\lambda_YB_Y}g_{f^{x_1}}^2\, d\mu_Y\right)^{1/2}\, d\mu_X(x_1)
        \\
        &\ \ \ \ \ \ \ \ \ \  + \  2C_Yr\vint_{B_Y}\left(\vint_{\lambda_XB_X}g_{f^{y_2}}^2\, d\mu_X\right)^{1/2}\, d\mu_Y(y_2)\\
  &\le C_0 r\left(\vint_{\lambda_0B_X}\vint_{\lambda_0B_Y}[g_{f^x}(y)+g_{f^y}(x)]^2\, d\mu_X(x)d\mu_Y(y)\right)^{1/2}.
\end{align*}
In the above, $\lambda_0=\max\{\lambda_X,\lambda_Y\}$ with $C_X,\lambda_X$ the
constants related to the Poincar\'e inequality on $X$ and $C_Y,\lambda_Y$ the corresponding constants for $Y$, 
and $C_0$ is a constant depending on $C_X,C_Y,\lambda_X,\lambda_Y$ and doubling.
It follows that with $g(x,y):=g_{f^x}(y)+g_{f^y}(x)$, we have that $g\in L^p(Z)$ and that the pair
$(f,g)$ satisfies the $p$-Poincar\'e inequality with constants $C_0$ and $\lambda_0$. Hence, by
\cite[Theorem~10.3.4]{HKSTbook}, we know that there is a constant $C\ge 1$ (with $C$ depending only on the
Poincar\'e constants of $Z$ and on $C_0,\lambda_0$) such that $Cg$ is a $p$-weak upper gradient of a function
$f_0$ in $Z$ such that $f_0=f$ $\mu_a$-a.e.~in $Z$. Let $E$ consist of points $(x,y)\in Z$ for which
$f_0(x,y)\ne f(x,y)$. Then by Fubini's theorem we have that for $\mu_X$-a.e.~$x\in X$, we have that
$\mu_Y(E\cap \{x\}\times Y)=0$, and so for such $x\in X$ we have that $f(x,\cdot)=f_0(x,\cdot)$ $\mu_Y$-a.e.~in $Y$.
Combining this with the assumptions on $f$, we have that for $\mu_X$-a.e.~$x\in X$,
the $2$-capacity (with respect to $Y$) of $E\cap\{x\}\times Y$ is zero. Similarly, for $\mu_Y$-a.e.~$y\in Y$, the
$2$-capacity (with respect to $X$) of $E\cap X\times\{y\}$ is zero.
If $Y=\R$, then we can say more; indeed, only the empty set is of zero $2$-capacity with respect to $(\R, |y|^a\, dy)$.
It follows that for $\mu_X$-a.e.~$x\in X$ we have that $f(x,y)=f_0(x,y)$ whenever $y\in\R$.

Conversely, suppose that $f\in N^{1,2}(Z)$, and let $g$ be an upper gradient of $f$ such that $g\in L^2(Z)$. Then by
Fubini's theorem, for $\mu_X$-a.e.~$x\in X$ we know that $g(x,\cdot),f(x,\cdot)\in L^2(Y)$ and for $\mu_Y$-a.e.~$y\in Y$ we have
$g(\cdot, y),f(\cdot, y)\in L^2(X)$. For such $x$ and $y$ we know that $f^x\in N^{1,2}(Y)$ with $g(x,\cdot)$ 
acting as an upper gradient
of $f^x$, and that $f^y\in N^{1,2}(X)$ with $g(\cdot, y)$ acting as an upper gradient of $f^y$. That is, $f$ satisfies the two
conditions listed in the claim of the lemma.
\end{proof}

\begin{remark}
In the above, we only require that $f^y$ have an upper gradient $g_{f^y}$ such that the map $(x,y)\mapsto g_{f^x}(y)$ belongs
to $L^2(Z)$, and that $f^x$ satisfies a similar condition. We do not require $g_{f^y}$ to be a \emph{minimal weak upper gradient}
if $f^y$, since we would not in practice be able to guarantee the measurability of such function in $Z$.
However, if $Y=\R$ is equipped
with the weighted measure $|y|^a\, dy$, then we have the following improvement, see Corollary~\ref{cor:Banach} below. 
\end{remark}

To prove the corollary mentioned above, we need the following lemma.

\begin{lemma}\label{lem:cylinder-modulus}
For $A\subset X$ that is $\mu_X$-measurable, and $h>0$, consider the family $\Gamma(A,h)$ of curves
in $Z$ of the form $\gamma_x:[0,h]\to Z$ given by $\gamma_x(t)=(x,t)$; $x\in A$. Then 
\[
\Mod_2(\Gamma(A,h))\simeq \frac{\mu_X(A)}{h^{1-a}}.
\]
Moreover, if $u\in N^{1,2}_{{\rm loc}}(Z_+)$ and $0\le t_1<t_2<\infty$, then for $\mu_X$-a.e.~$x\in X$ we have
that $u\circ\gamma_x:[t_1,t_2]\to\R$ is absolutely continuous with $\partial_t u\circ\gamma_x\le g_u\circ\gamma_x$,
where $\gamma_x:[t_1,t_2]\to Z_+$ is given by $\gamma_x(t)=(x,t)$. In particular,  if $\text{Cap}_2(A\times\{0\})=0$,
then $\mu_X(A)=0$.
\end{lemma}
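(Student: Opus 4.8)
The statement has three parts: the modulus estimate for the cylinder curve family, the absolute continuity of $u$ along vertical lines for $u\in N^{1,2}_{\rm loc}(Z_+)$, and the consequence that $\text{Cap}_2(A\times\{0\})=0$ forces $\mu_X(A)=0$. I would handle them in that order, since the third follows quickly from the first two.

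For the modulus estimate I would argue by matching upper and lower bounds. For the upper bound, take the admissible function $\rho(x,y):=h^{-1}\chi_{A\times[0,h]}(x,y)$: along each $\gamma_x$, $x\in A$, we have $\int_{\gamma_x}\rho\,ds=h^{-1}\cdot h=1$ (the arclength along $\gamma_x$ is just the Euclidean length in the $y$-variable, since $d_Z$ restricted to a vertical fiber is the Euclidean metric on $\R$), so $\rho$ is admissible and $\int_Z\rho^2\,d\mu_a=h^{-2}\mu_X(A)\int_0^h y^a\,dy = h^{-2}\mu_X(A)\,\tfrac{h^{1+a}}{1+a}\simeq \mu_X(A)h^{a-1}=\mu_X(A)/h^{1-a}$. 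For the lower bound, let $\rho$ be any admissible function; for each $x\in A$, $1\le\int_0^h\rho(x,t)\,dt$, and by Hölder's inequality applied with the weight $t^a$ (and using that $\int_0^h t^{-a/(1)}\cdots$ — more precisely, writing $\int_0^h\rho(x,t)\,dt=\int_0^h \rho(x,t)t^{a/2}\cdot t^{-a/2}\,dt \le \big(\int_0^h\rho(x,t)^2 t^a\,dt\big)^{1/2}\big(\int_0^h t^{-a}\,dt\big)^{1/2}$, valid since $a<1$) we get $\int_0^h\rho(x,t)^2 t^a\,dt\ge \big(\int_0^h t^{-a}\,dt\big)^{-1}\simeq h^{a-1}$. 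Integrating over $x\in A$ gives $\int_Z\rho^2\,d\mu_a\gtrsim \mu_X(A)/h^{1-a}$; taking the infimum over $\rho$ finishes the estimate.

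For the absolute continuity claim: by definition $u\in N^{1,2}_{\rm loc}(Z_+)$ has a $2$-weak upper gradient $g_u\in L^2_{\rm loc}(Z_+,\mu_a)$, meaning the upper gradient inequality holds along all curves outside a family $\Gamma_0$ of $\Mod_2$-measure zero in $Z_+$. Fix $0<t_1<t_2<\infty$ and consider the subfamily of vertical segments $\gamma_x:[t_1,t_2]\to Z_+$. By the subadditivity and the behaviour of modulus under restriction, together with the computation just done (now on $[t_1,t_2]$, which lies in $Z_+$ and has finite positive weight), the set of $x$ such that $\gamma_x\in\Gamma_0$ has $\mu_X$-measure zero — this uses exactly that a positive-measure set of vertical fibers has positive modulus, so a null-modulus family cannot contain a positive-$\mu_X$-measure set of them. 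For the remaining $x$, the upper gradient inequality along $\gamma_x$ and all its subsegments, combined with $\int_{t_1}^{t_2}g_u(x,t)\,dt<\infty$ for $\mu_X$-a.e.~$x$ (Fubini, since $g_u\in L^1_{\rm loc}$ once we note $t^a$ is bounded above and below on $[t_1,t_2]$), gives that $t\mapsto u(x,t)$ is absolutely continuous on $[t_1,t_2]$ with $|\partial_t(u\circ\gamma_x)|\le g_u\circ\gamma_x$ a.e. Finally, if $\text{Cap}_2(A\times\{0\})=0$ then in particular $A\times\{0\}$ is contained in the exceptional set for every Newton-Sobolev function; applying the above to $u=$ a function in $N^{1,2}(Z)$ witnessing the zero capacity and using that its vertical traces are absolutely continuous (hence finite-valued) on $[0,t_2]$, one sees that $u\ge 1$ cannot hold on a positive-$\mu_X$-measure set $A\times\{0\}$ without violating the capacity bound; more directly, zero $2$-capacity of $A\times\{0\}$ implies $\mathcal{H}^{*,1}(A\times\{0\})=0$ (co-dimension one Hausdorff measure), and by a Fubini-type estimate relating $\mathcal{H}^{*,1}$ on the slice $y=0$ to $\mu_X$ on $X$ (using $d\mu_a=|y|^a dy\,d\mu_X$ and the definition~\eqref{eq:co-dim-tau-Hausdorff}), this forces $\mu_X(A)=0$.

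The main obstacle I expect is the last implication: making precise the link between $\text{Cap}_2$ of a subset of the hyperplane $X\times\{0\}$ and $\mu_X$-measure. The cleanest route is probably to bypass capacity entirely and argue directly: if $\mu_X(A)>0$, build a Newton-Sobolev function on $Z$ that is forced to be $\ge 1$ on $A\times\{0\}$ but has small energy, using the modulus estimate of part one (with $h\to 0$, the modulus $\mu_X(A)/h^{1-a}\to\infty$ since $1-a=2\theta>0$, which is exactly the obstruction to capacity being zero) — so the divergence of $\Mod_2(\Gamma(A,h))$ as $h\to 0^+$ is the quantitative heart of the matter, and I would state it as the key step feeding the contradiction.
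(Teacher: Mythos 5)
Your treatment of the modulus estimate (part one) is essentially identical to the paper's: same admissible function $\rho=h^{-1}\chi_{A\times[0,h]}$ for the upper bound, same H\"older/Cauchy--Schwarz split $\rho=\rho\, t^{a/2}\cdot t^{-a/2}$ for the lower bound. Your handling of the absolute continuity along vertical fibers (part two) is also the correct and standard argument — Newton--Sobolev functions are absolutely continuous along $\Mod_2$-a.e.\ curve, and the modulus estimate shows a positive-$\mu_X$-measure set of vertical fibers cannot be contained in a null-modulus family; the paper doesn't spell this out, so making it explicit is a genuine improvement in exposition.

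Where you drift is part three. The paper's route is short and clean: it invokes the standard equivalence (cited to~\cite{HKSTbook,N}) that $\text{Cap}_2(E)=0$ if and only if $\mu_a(E)=0$ and the family of all non-constant rectifiable curves meeting $E$ has zero $2$-modulus. Since every $\gamma_x$ with $x\in A$ meets $A\times\{0\}$, $\text{Cap}_2(A\times\{0\})=0$ forces $\Mod_2(\Gamma(A,h))=0$, and the estimate from part one then gives $\mu_X(A)=0$. Your first suggestion (``applying the above to $u$ a function witnessing the zero capacity \ldots'') gestures at this but does not actually close the argument: the vertical traces being absolutely continuous and finite does not by itself contradict $u\ge 1$ on $A\times\{0\}$ with small energy; you would still need a quantitative mechanism tying the boundary values to the energy, and that mechanism is precisely the capacity--modulus equivalence, so you should cite and use it directly. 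Your ``more directly'' alternative via $\mathcal{H}^{*,1}(A\times\{0\})=0$ has a concrete error: the relevant co-dimension associated with $\text{Cap}_2$ is $2$, not $1$, and moreover $\mu_X$ is a co-dimension $1+a$ Hausdorff measure on $X\times\{0\}$ (Proposition~\ref{prop:unif+PI}). When $a\ge 0$ one has $1+a\ge 1$, and then $\mathcal{H}^{*,1}(A\times\{0\})=0$ does \emph{not} give $\mathcal{H}^{*,1+a}(A\times\{0\})=0$ (the vanishing of $\mathcal{H}^{*,\tau}$ propagates to smaller $\tau$, not larger). One could salvage this line by using co-dimension $2$, since $1+a<2$, but the implication from zero $2$-capacity to zero co-dimension-$2$ Hausdorff measure in a general doubling PI space would itself need justification; the modulus route the paper takes avoids all of this. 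Finally, the divergence of $\Mod_2(\Gamma(A,h))$ as $h\to 0^+$ is not needed anywhere: positivity for a single fixed $h$ already contradicts zero modulus.
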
 

\begin{proof}
Setting $\rho:=h^{-1}\chi_{A\times[0,h]}$ we see that for each $\gamma\in\Gamma(A,h)$, $\int_\gamma\rho\, ds=1$.
It follows that 
\[
\Mod_2(\Gamma(A,h))\le \int_Z\rho^2\, d\mu_a=\frac{\mu_X(A)}{(a+1) h^{1-a}}.
\]
On the other hand, if $\rho$ is a non-negative Borel measureable function on $Z$ such that 
for each $\gamma\in\Gamma(A,h)$ we have $\int_\gamma\rho\, ds\ge 1$, then
\begin{align*}
1\le \int_0^h\rho(x,t)\, dt &\le \left(\int_0^h\rho(x,t)^2 t^a\, dt\right)^{1/2}\left(\int_0^ht^{-a}\, dt\right)^{1/2}\\
   &=\left(\int_0^h\rho(x,t)^2 t^a\, dt\right)^{1/2}\left(\frac{h^{1-a}}{1-a}\right)^{1/2}.
\end{align*}
It follows that
\[
\frac{1-a}{h^{1-a}}\le \int_0^h\rho(x,t)^2t^a\, dt.
\]
Integrating over $x\in A$ gives
\[
\frac{(1-a)\mu_X(A)}{h^{1-a}}\le \int_Z\rho^2\, d\mu_a.
\]
Taking the infimum over all such $\rho$ gives
\[
\frac{(1-a)\mu_X(A)}{h^{1-a}}\le \Mod_2(\Gamma(A,h)).
\]

As a consequence of the above, we know that $\Mod_2(\Gamma(A,h))=0$ if and only if $\mu_X(A)=0$. 
The final statement about capacity follows from the fact that $\Mod_2$ of the family of all curves intersecting a
set of measure zero is null if and only if the capacity of that set is zero, see~\cite{ HKSTbook,N}.
\end{proof}

\begin{cor}\label{cor:Banach}
Let $Y=\R$ or $Y=(0,\infty)$ be equipped with the Euclidean metric and the measure $\mu_Y$ given by $d\mu_Y=y^a\, dy$. 
Let $u\in N^{1,2}(Z)$. Then for $\mu_X$-almost every $x\in X$ we have that $u(x,\cdot)$ is absolutely continuous on $Y$
with $y\mapsto \partial_y u(x,y)\in L^2(Z)$. In particular, $y\mapsto \partial_y u(x,y)$ is measurable on $Z$.
\end{cor}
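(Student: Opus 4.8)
The plan is to deduce Corollary~\ref{cor:Banach} from Lemma~\ref{lem:tensor-N1p} and Lemma~\ref{lem:cylinder-modulus}. First I would apply Lemma~\ref{lem:tensor-N1p} to $u\in N^{1,2}(Z)$: since $Y=\mathbb{R}$ (or $(0,\infty)$) with the weight $|y|^a\,dy$ is an $A_2$-weight and hence supports a $2$-Poincar\'e inequality (Remark~\ref{remk:Cartesian-PI}), the hypotheses of the lemma are met, and the ``only if'' direction tells us that for $\mu_X$-almost every $x\in X$ the slice $f^x:=u(x,\cdot)$ lies in $N^{1,2}(Y)$, with an upper gradient $g_{f^x}$ satisfying $(x,y)\mapsto g_{f^x}(y)\in L^2(Z)$. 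I would then invoke the standard fact (from \cite{HKSTbook}) that a Newton-Sobolev function on an interval in $(\mathbb{R},|y|^a\,dy)$ has an absolutely continuous representative on compact subintervals, with $|\partial_y f^x|\le g_{f^x}$ almost everywhere; since this weight is locally comparable to Lebesgue measure away from $0$ and integrable near $0$, absolute continuity on all of $Y$ follows. This gives pointwise differentiability a.e.\ and the bound $\partial_y u(x,\cdot)\in L^2(Y,\mu_Y)$ for a.e.\ $x$, hence $y\mapsto\partial_y u(x,y)\in L^2(Z,\mu_a)$ by Fubini.

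The one subtlety is the \emph{measurability} of $(x,y)\mapsto\partial_y u(x,y)$ on $Z$, because the upper gradient $g_{f^x}$ produced by Lemma~\ref{lem:tensor-N1p} need not be the minimal one and is only guaranteed to be jointly measurable as supplied, not canonically. Here is where Lemma~\ref{lem:cylinder-modulus} does the work: I would fix a countable dense set of rationals in $Y$ and use the difference-quotient description $\partial_y u(x,y)=\lim_{h\to 0}\tfrac{u(x,y+h)-u(x,y)}{h}$ along the rational increments. Each difference quotient $(x,y)\mapsto\tfrac{u(x,y+h)-u(x,y)}{h}$ is $\mu_a$-measurable on $Z$ because $u$ itself is (and translation in the $y$-variable is measurable). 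The conclusion of Lemma~\ref{lem:cylinder-modulus} is precisely what guarantees that the limit of these quotients exists for $\mu_a$-a.e.\ $(x,y)$: the vertical segments $\gamma_x(t)=(x,t)$ sweep out a curve family whose $2$-modulus controls $\mu_X(A)$, and for $u\in N^{1,2}_{\mathrm{loc}}(Z_+)$ the lemma asserts $u\circ\gamma_x$ is absolutely continuous for $\mu_X$-a.e.\ $x$; by Fubini this means the limit exists $\mu_a$-a.e.\ on $Z$. A pointwise a.e.\ limit of measurable functions is measurable, so $(x,y)\mapsto\partial_y u(x,y)$ is $\mu_a$-measurable, as claimed. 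For $Y=\mathbb{R}$ one handles the half-lines $(0,\infty)$ and $(-\infty,0)$ separately and notes the segment through $\{y=0\}$ is harmless since $\mu_a(X\times\{0\})=0$.

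The main obstacle, as indicated, is not the differentiability or the $L^2$-bound—those are immediate from the one-dimensional Newton-Sobolev theory once Lemma~\ref{lem:tensor-N1p} hands us the good slices—but rather pinning down joint measurability in $Z$ without appeal to a (possibly non-measurable) minimal weak upper gradient in the fiber direction. The difference-quotient argument sidesteps this: it realizes $\partial_y u$ as an explicit a.e.\ limit of manifestly jointly measurable functions, and the a.e.\ convergence is exactly the content of the absolute-continuity-on-almost-every-vertical-segment statement in Lemma~\ref{lem:cylinder-modulus}. I would keep the write-up short: cite Remark~\ref{remk:Cartesian-PI} for the $2$-Poincar\'e inequality on $Y$, apply Lemma~\ref{lem:tensor-N1p}, quote the one-dimensional absolute continuity of Newton-Sobolev functions on an $A_2$-weighted interval, and finish with the difference-quotient/Fubini measurability argument built on Lemma~\ref{lem:cylinder-modulus}.
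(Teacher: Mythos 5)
Your proposal is correct and reaches the same conclusion, but the measurability step is handled by a genuinely different and cleaner device than the paper's. Both proofs get the absolute continuity of the slices $u(x,\cdot)$ for $\mu_X$-a.e.~$x$ and then observe $|\partial_y u|\le g_u\in L^2(Z,\mu_a)$, so the only real work is showing $(x,y)\mapsto\partial_y u(x,y)$ is $\mu_a$-measurable. The paper establishes this by defining the set $\widetilde{X}$ of $x$ where $u(x,\cdot)$ is continuous and expressing, for each compact interval $B$, the set $D_B$ of points where $\partial_y u$ exists and lies in $B$ as a countable intersection-union-union-intersection of elementary measurable sets built from rational $\varepsilon$, $\delta$, $q$, $t$; the continuity of the slices is used to reduce the supremum over $t$ to a supremum over rational $t$. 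You instead write $\partial_y u$ as the $\mu_a$-a.e.\ pointwise limit of the difference quotients $(x,y)\mapsto\frac{u(x,y+h)-u(x,y)}{h}$ along a countable sequence $h\to 0$: each such quotient is jointly $\mu_a$-measurable because $u$ is and $y\mapsto y+h$ preserves $\mu_a$-null sets (since $|y|^a\,dy$ and Lebesgue measure are mutually absolutely continuous), and the a.e.\ existence of the limit follows from the a.e.\ differentiability of the slices via Fubini. This is shorter and avoids the explicit set-theoretic bookkeeping; the only cost is that you produce a measurable function agreeing with $\partial_y u$ a.e.\ rather than directly proving measurability of the exact domain of differentiability, but that distinction is immaterial for the way the corollary is used downstream. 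Two small points worth tightening: (a) you should say ``locally absolutely continuous'' or ``absolutely continuous on compact subintervals of $Y$''---$\partial_y u(x,\cdot)$ need not be in $L^1(Y)$, so global absolute continuity on an unbounded $Y$ is not what you obtain, and the paper's own proof makes the same local restriction even though the statement phrases it loosely; (b) once you have invoked Lemma~\ref{lem:tensor-N1p} for the slices in $N^{1,2}(Y)$, the appeal to Lemma~\ref{lem:cylinder-modulus} for a.e.\ existence of the limit is redundant (either one suffices, and the paper uses only the latter); keeping just one streamlines the write-up.
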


\begin{proof}
Note that as $u\in N^{1,2}(Z)$, $u$ is absolutely continuous on $2$-modulus almost every non-constant compact rectifiable curve
in $Z$, see for instance~\cite{ HKSTbook,N}. Hence by Lemma~\ref{lem:cylinder-modulus} we know that for $\mu_X$-almost
every $x\in X$ the map $y\mapsto u(x,y)$ is absolutely continuous on compact subintervals of $Y$. As $u$ has an upper gradient
$g$ in $Z$, we know that $|\partial_y u(x,y)|\le g(x,y)$, and so as $g\in L^2(Z)$, it suffices to show that
$\partial_y u$ is measurable on $Z$. To this end we argue as follows.

 Suppose $Y=\bbR$ so that $u\colon X\times \bbR\to \bbR$. The proof is similar if $Y=(0,\infty)$. Let
 \[
 \widetilde{X}=\{x\in X\, :\, y\mapsto u(x,y) \mbox{ is continuous}\}.
 \]
 Then $\widetilde{X}$ is a measurable subset of $X$ and $\mu(X\setminus \widetilde{X})=0$. 
 We show that the domain of $\partial_{y}u(x,y)$ is a measurable subset of $\widetilde{X}\times \bbR$ and that 
 $\partial_{y}u(x,y)$ is a measurable function of $(x,y)$ on that set. It suffices to prove that for every closed 
 bounded interval of positive length $B\subset \mathbb{R}$, the set
 \[D_{B}:=\{(x_{0},y_{0})\in \widetilde{X}\times \bbR\, :\, \partial_{y}u(x_{0},y_{0}) \mbox{ exists and belongs to }B\}\]
is measurable. 

Fix a closed bounded interval $B$ of positive length. For $(x,y)\in X\times \bbR$ it follows from compactness of 
$B$ that $\partial_{y}u(x_{0},y_{0})$ exists and belongs to $B$ if and only if for every 
$\varepsilon \in \mathbb{Q}^{+}$ there exists $\delta \in \mathbb{Q}^{+}$ and $q\in B\cap \mathbb{Q}$ such that
\[
\sup_{0<|t|<\delta} \frac{|u(x_{0},y_{0}+t)-u(x_{0},y_{0})-qt|}{|t|}\leq \varepsilon.
\]
Observe that if $x_{0}\in \widetilde{X}$ it is equivalent to take the above supremum only for 
$t\in (-\delta,\delta)\cap \mathbb{Q}$. We claim that
\begin{equation}\label{measeq}
D_{B}=\bigcap_{\varepsilon \in \mathbb{Q}^{+}}\bigcup_{\delta \in \mathbb{Q}^{+}} \bigcup_{q\in B\cap \mathbb{Q}} \bigcap_{t\in \mathbb{Q}\cap (-\delta, \delta)\setminus \{0\}} D_{B}(\varepsilon,\delta,q,t),
\end{equation}
where
\[
D_{B}(\varepsilon,\delta,q,t)=\bigg\lbrace(x_{0},y_{0})\in \widetilde{X}\times \bbR \, :\, 
 \frac{|u(x_{0},y_{0}+t)-u(x_{0},y_{0})-qt|}{|t|}\leq \varepsilon\bigg\rbrace.
\]
That the left side of~\eqref{measeq} is contained in the right side follows from routine approximation by rationals. 
To show the right side is contained in the left side, we take $\varepsilon=1/n$ for $n\in \mathbb{N}$ to find 
a sequence of rational $\delta_{n}>0$ and $q_{n}\in B\cap \mathbb{Q}$ such that for all 
$t\in \mathbb{Q}\cap (-\delta, \delta)\setminus \{0\}$ we have 
\[\frac{|u(x_{0},y_{0}+t)-u(x_{0},y_{0})-q_{n}t|}{|t|}\leq 1/n.\]
This inequality necessarily holds for all $t\in (-\delta, \delta)\setminus\{0\}$ because the left side is a 
continuous function of $t$ away from $0$. Since $B$ is compact, the sequence $\{q_{n}\}$ has a 
subsequence converging to some $q\in B$. It is then routine to show that $\partial_{y}u(x_{0},y_{0})$ exists and equals $q$.

Since $u$ is measurable on $X\times \bbR$ and $\widetilde{X}$ is a measurable subset of $X$, it 
follows that each set $D_{B}(\varepsilon,\delta,q,t)$ is measurable. Hence, by \eqref{measeq}, $D_B$ is a 
measurable subset of $X\times \bbR$.
\end{proof}

\subsection{Tensorization of Differentiable Structures}\label{Subsect:diff-struct-tensor}

Recall the definition of a Cheeger differentiable structure from Definition~\ref{deff:Ch-Diff-Struct}. Given two doubling metric measure
spaces $(X,d_X,\mu_X)$ and $(Y,d_Y,\mu_Y)$, in this section we construct a differential structure on $Z=X\times Y$.

Let $(X,d_{X},\mu_{X})$ and $(Y,d_{Y},\mu_{Y})$ be metric measure spaces, each equipped with a doubling measure 
and admitting a differentiable structure. Denote the charts in the differentiable structure by $(U_{i}, \varphi_{i})$ and 
$(V_{j},\psi_{j})$ where $\varphi_{i} \colon X\to \bbR^{m_{i}}$ and $\psi_{j} \colon X\to \bbR^{n_j}$ respectively for $i, j\in \bbN$.

\begin{defn}
Suppose $f\colon X\times Y\to \bbR$ is Lipschitz with respect to $d_{X}\times d_{Y}$ and 
$(x_{0},y_{0})\in U_{i}\times V_{j}$ for some $i,j\in \bbN$. 

We say $f$ is differentiable at the point $(x_{0},y_{0})$ in the $X$ direction with derivative $D_{X}f(x_{0},y_{0})$ (with 
respect to $\varphi_{i}$) if $D_{X}f(x_{0},y_{0})$ is the unique element $v\in \bbR^{m_{i}}$ such that
\[\lim_{x\to x_{0}} \frac{|f(x,y_{0})-f(x_{0},y_{0})-v\cdot (\varphi_{i}(x)-\varphi_{i}(x_{0})|}{d_{X}(x,x_{0})}=0.\]
We say $f$ is differentiable at $(x_{0},y_{0})$ in the $Y$ direction with derivative $D_{Y}f(x_{0},y_{0})$ (with respect 
to $\psi_{j}$) if $D_{Y}f(x_{0},y_{0})$ is the unique element $w\in \bbR^{n_{j}}$ such that
\[\lim_{x\to x_{0}} \frac{|f(x_{0},y)-f(x_{0},y_{0})-w\cdot (\psi_{j}(y)-\psi_{j}(y_{0})|}{d_{Y}(y,y_{0})}=0.\]
\end{defn}

For each $i,j\in \bbN$, let $\varphi_{i}\times \psi_{j}\colon X\times Y\to \mathbb{R}^{m_{i}}\times \mathbb{R}^{n_{j}}$ 
denote the map \[(x,y)\mapsto (\varphi_{i}(x), \psi_{j}(y)).\]

In this section we will prove the following theorem.

\begin{thm}\label{diffproduct}
Suppose $(X,d_{X},\mu_{X})$ and $(Y,d_{Y},\mu_{Y})$ are doubling metric measure spaces each admitting a 
differentiable structure. Then $(X \times Y,d_X \times d_Y, \mu_{X} \times \mu_{Y})$ is also a doubling metric 
measure space and admits a differentiable structure. 

More precisely, the charts in the differentiable structure for $(X \times Y,d_X \times d_Y, \mu_{X} \times \mu_{Y})$ 
can be chosen to be $(U_{i}\times V_{j}, \varphi_{i}\times \psi_{j})$ for $i,j \in \bbN$, where 
$\varphi_{i}\times \psi_{j}\colon U_{i}\times V_{j}\to \bbR^{m_{i}}\times \bbR^{n_{j}}$ is defined by 
$(\varphi_{i}\times \psi_{j})(x,y)=(\varphi_{i}(x), \psi_{j}(y))$. 

Given $f\colon X\times Y\to \bbR$ and $i,j\in \bbN$, for almost every $(x_{0},y_{0})\in U_{i}\times V_{j}$, the 
derivative of $f$ with respect to $\varphi_{i}\times \psi_{j}$ is $(D_{X}f(x_{0},y_{0}), D_{Y}f(x_{0},y_{0}))$, 
where $D_{X}f(x_{0},y_{0})$ and $D_{Y}f(x_{0},y_{0})$ are the derivatives of $f$ with respect to $X$ and 
$\varphi_{i}$ or $Y$ and $\psi_{j}$.
\end{thm}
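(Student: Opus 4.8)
The plan is to verify the three requirements in Definition~\ref{deff:Ch-Diff-Struct} for the proposed charts $(U_i \times V_j, \varphi_i \times \psi_j)$: (a) that the $U_i \times V_j$ cover $X \times Y$ up to $(\mu_X \times \mu_Y)$-measure zero; (b) that the claimed derivative $(D_X f, D_Y f)$ does give a first-order Taylor approximation in the joint variable at almost every point; and (c) that this derivative is unique. The covering property (a) is immediate from Fubini, since $X \setminus \bigcup_i U_i$ and $Y \setminus \bigcup_j V_j$ are null. The doubling property of $(X \times Y, d_X \times d_Y, \mu_X \times \mu_Y)$ is also routine (balls in the product are comparable to products of balls, and the product of doubling measures is doubling), so the heart of the matter is (b).

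For (b), fix a Lipschitz $f \colon X \times Y \to \bbR$ and a chart index $(i,j)$. First I would apply the differentiability structure on $X$: for $\mu_X$-almost every $x_0 \in U_i$, for every $y \in V_j$ the slice $f(\cdot, y)$ is differentiable at $x_0$ with respect to $\varphi_i$. The subtlety is that ``almost every'' must be uniform in $y$, so I would instead invoke the structure on $X$ for the function $x \mapsto f(x, y)$ only for $y$ in a countable dense subset $Q_Y$ of $Y$, obtaining a single full-measure set of good $x_0$ that works for all $y \in Q_Y$; Lipschitz continuity of $f$ in $y$ then upgrades this to all $y \in V_j$ by an $\varepsilon/3$ argument, and likewise shows $y \mapsto D_X f(x_0, y)$ is Lipschitz (hence continuous). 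Symmetrically, for $\mu_Y$-almost every $y_0$ the slice $f(x, \cdot)$ is differentiable at $y_0$ in the $Y$ direction, with $x \mapsto D_Y f(x, y_0)$ Lipschitz. By Fubini, almost every $(x_0, y_0) \in U_i \times V_j$ is simultaneously ``good'' in both senses. At such a point, for $(x,y)$ near $(x_0,y_0)$ I would split
\[
f(x,y) - f(x_0,y_0) = \bigl(f(x,y) - f(x_0,y)\bigr) + \bigl(f(x_0,y) - f(x_0,y_0)\bigr),
\]
estimate the second term by differentiability of $f(x_0, \cdot)$ at $y_0$, and estimate the first term by writing it as $D_X f(x_0, y) \cdot (\varphi_i(x) - \varphi_i(x_0))$ plus an error that is $o(d_X(x, x_0))$ uniformly — here I need the slicewise differentiability of $f(\cdot, y)$ together with a uniform-in-$y$ smallness of the Taylor remainder, which again follows from the Lipschitz dependence on $y$ and compactness. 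Finally, replacing $D_X f(x_0, y)$ by $D_X f(x_0, y_0)$ costs only $\Lip$ of that map times $d_Y(y, y_0)$, which is absorbed into $d(x,y) = d_Z((x,y),(x_0,y_0))$. Combining, the total error is $o(d_Z((x,y),(x_0,y_0)))$, which is exactly the first-order Taylor expansion~\eqref{eq:diff} with derivative $(D_X f(x_0,y_0), D_Y f(x_0,y_0))$.

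Uniqueness (c) then follows from the one-variable uniqueness: testing~\eqref{eq:diff} along the slice $y = y_0$ forces the $X$-component to equal $D_X f(x_0,y_0)$, and along $x = x_0$ forces the $Y$-component to equal $D_Y f(x_0,y_0)$. I would also remark that since the Cheeger construction gives $\mu_X$-measurability of $x \mapsto D_X f(x)$ on each chart (and similarly for $Y$), the joint map $(x_0,y_0) \mapsto (D_X f(x_0,y_0), D_Y f(x_0,y_0))$ on $U_i \times V_j$ is measurable, as the differentiability-structure definition implicitly requires. I expect the main obstacle to be the uniformity in the auxiliary variable: the naive approach applies the base differentiability structure ``for a.e.\ $x_0$, for all $y$,'' but the exceptional $x$-set could depend on $y$, so one genuinely needs the countable-dense-subset plus Lipschitz-interpolation device to pass to a single good set of base points and to control the Taylor remainder uniformly.
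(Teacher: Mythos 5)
Your proposal diverges substantially from the paper's argument, and the divergence is not a matter of taste: the places where you invoke ``Lipschitz dependence on $y$'' to do the heavy lifting are exactly the places where the paper must (and does) do genuine work via a porosity argument.

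Two specific claims in your write-up do not hold. First, the assertion that $y\mapsto D_X f(x_0,y)$ is Lipschitz (or even continuous) is false. Take $X=Y=\R$ with Lebesgue measure and $f(x,y)=|x-y|$, which is $1$-Lipschitz; then for $x_0\neq y$ one has $D_X f(x_0,y)=\mathrm{sgn}(x_0-y)$, which jumps across $y=x_0$. The underlying reason is that the difference quotients $(f(x_0+t,\cdot)-f(x_0,\cdot))/t$ are Lipschitz in $y$ only with constant of order $L_f/|t|$, which blows up as $t\to 0$, so the Lipschitz property does not pass to the limit. Second, the step where you pass from slice differentiability at $x_0$ for a countable dense set of $y$-values to all $y$, and simultaneously claim the Taylor remainder $f(x,y)-f(x_0,y)-D_Xf(x_0,y)\cdot(\varphi_i(x)-\varphi_i(x_0))=o(d_X(x,x_0))$ holds \emph{uniformly} in $y$ near $y_0$, is not justified. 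For fixed $y$ the remainder is $o(d_X(x,x_0))$ by definition, but the modulus $\delta(\varepsilon,y)$ in the $o$-statement can degenerate as $y\to y_0$; Lipschitz continuity of $f$ in $y$ gives only an $O(d_Y(y,y'))$ error when swapping $y$ for a nearby rational $y'$, which does not compare favorably with $d_X(x,x_0)$ in the joint limit $(x,y)\to(x_0,y_0)$. This lack of uniformity is precisely the obstacle that separates ``directional derivatives exist a.e.'' from ``joint differentiability a.e.,'' and it cannot be dissolved by an $\varepsilon/3$ argument alone.

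The paper handles this obstacle with a different mechanism. It introduces, for rational $v,w,\varepsilon,\delta$, the set $P(v,w,\varepsilon,\delta,f)$ of points at which the $X$- and $Y$-directional Taylor estimates (with candidate derivatives $v,w$, error $\varepsilon$, scale $\delta$) already hold, and yet there are points $(x,y)$ arbitrarily close at which the joint Taylor estimate with derivative $(v,w)$ fails by a quantified margin. The key lemma shows each such set is porous, by a geometric argument: if $(x_0,y_0)$ were such a point and the nearby witness is $(x,y)$, then any point $(a,b)$ of $P$ inside a small ball around $(x,y_0)$ would, through its own conditions (1) and (2) together with the joint Lipschitz bound, force the joint Taylor estimate at $(x_0,y_0)$ to hold after all, a contradiction. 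Since porous sets are null for doubling measures, the $\sigma$-porous union $P(f)$ is null, and off $P(f)$ the individual directional derivatives (which exist a.e.\ by a separate Fubini/measurability lemma) automatically assemble into a joint derivative. Your proposal does not contain any analogue of the porosity step, and the ``uniform-in-$y$ remainder'' it substitutes is unjustified, so the proof has a genuine gap at the critical point.
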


We now proceed to prove Theorem~\ref{diffproduct}. To prove Theorem~\ref{diffproduct}, we first establish 
some needed lemmas. We denote the Lipschitz constant of any Lipschitz map $g$ by $L_{g}$. Fix a 
Lipschitz map $f\colon X\times Y \to \bbR$.

Recall that a set $P$ in a metric space $M$ is porous if there exists $\lambda>0$ such that for every 
$x\in P$ the following property holds. There exists $x_{n}\in M$ such that $x_{n}\to x$ and 
$B(x_{n},\lambda d(x_{n},x))\cap P=\varnothing$. A set is $\sigma$-porous if it is a countable union of porous sets.

\begin{lemma}
Given $m, n\in \bbN$, $v\in \bbR^m, w\in \bbR^n, \varepsilon, \delta \in (0,1)$, let $P(v,w,\varepsilon, \delta,f)$ be 
the set of $(x_{0},y_{0})\in X\times Y$ with the following properties.
\begin{enumerate}
\item For every chart $U_{i}$ containing $x_{0}$ with $m_{i}=m$, $0<d_{X}(x,x_{0})<\delta$ implies
\[|f(x,y_{0})-f(x_{0},y_{0})-v\cdot (\varphi_{i}(x)-\varphi_{i}(x_{0}))|<\varepsilon d_{X}(x,x_{0}).\]
\item For every chart $V_{j}$ containing $y_{0}$ with $n_{j}=n$, $0<d_{Y}(y,y_{0})<\delta$ implies
\[|f(x_{0},y)-f(x_{0},y_{0})-w\cdot (\psi_{j}(y)-\psi_{j}(y_{0}))|<\varepsilon d_{Y}(y,y_{0}).\]
\item There exist charts $U_{i}$ containing $x_{0}$ with $m_{i}=m$ and $V_{j}$ containing $y_{0}$ with $n_{j}=n$ 
for which there is $(x,y)$ arbitrarily close to $(x_{0},y_{0})$ with
\begin{align}\label{eq:bad-approach}
&|f(x,y)-f(x_{0},y_{0})-v\cdot (\varphi_{i}(x)-\varphi_{i}(x_{0}))-w\cdot (\psi_{j}(y)-\psi_{j}(y_{0}))|\\
& > \varepsilon (2L_{f}+2+|w|L_{\psi_{j}})d_{X}(x,x_{0})+\varepsilon d_{Y}(y,y_{0}). \notag
\end{align}
\end{enumerate}
Then the set $P(v,w,\varepsilon, \delta,f)$ is porous.
\end{lemma}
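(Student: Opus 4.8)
\emph{The plan} is to establish porosity directly: for each $(x_0,y_0)\in P:=P(v,w,\varepsilon,\delta,f)$ I would produce a sequence of balls disjoint from $P$, converging to $(x_0,y_0)$, whose radii are a fixed fraction of their distance to $(x_0,y_0)$, the fraction depending only on $\varepsilon,L_f,|w|,L_{\psi_j}$. One may assume $P\neq\varnothing$. So fix $(x_0,y_0)\in P$ and, via property~(3), charts $U_i\ni x_0$ (with $m_i=m$) and $V_j\ni y_0$ (with $n_j=n$) and points $(x_k,y_k)\to(x_0,y_0)$ satisfying~\eqref{eq:bad-approach}. For all large $k$ one has $d_X(x_k,x_0)\in(0,\delta)$ and $d_Y(y_k,y_0)\in(0,\delta)$: the upper bounds hold since $(x_k,y_k)\to(x_0,y_0)$, and positivity holds because if $x_k=x_0$ then~\eqref{eq:bad-approach} contradicts property~(2) at $(x_0,y_0)$, and if $y_k=y_0$ it contradicts property~(1), using that the structural constant $2L_f+2+|w|L_{\psi_j}$ is at least $1$.

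First I would isolate the bad behaviour in the vertical slice $\{x_k\}\times Y$. Write the quantity in~\eqref{eq:bad-approach} as $A_k^1+A_k^2$, where $A_k^1=f(x_k,y_0)-f(x_0,y_0)-v\cdot(\varphi_i(x_k)-\varphi_i(x_0))$ and $A_k^2=f(x_k,y_k)-f(x_k,y_0)-w\cdot(\psi_j(y_k)-\psi_j(y_0))$. Property~(1) at $(x_0,y_0)$, applied with the chart $U_i$ and the point $x_k$, gives $|A_k^1|<\varepsilon\, d_X(x_k,x_0)$, so~\eqref{eq:bad-approach} yields
\[
|A_k^2|\ >\ \varepsilon\,(2L_f+1+|w|L_{\psi_j})\,d_X(x_k,x_0)+\varepsilon\, d_Y(y_k,y_0).
\]
In words: the affine approximation $f(x_k,y_0)+w\cdot\psi_j(\cdot)$ fails to capture $f(x_k,\cdot)$ near $y_0$ at scale $d_Y(y_k,y_0)$, with an excess error of order $d_X(x_k,x_0)$. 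Comparing this with the trivial Lipschitz bound $|A_k^2|\le(L_f+|w|L_{\psi_j})\,d_Y(y_k,y_0)$ also gives $d_X(x_k,x_0)\lesssim d_Y(y_k,y_0)$, which I record for later.

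Next I would convert this failure into a hole, centred not at the bad point $(x_k,y_k)$ but at its ``$X$-shadow'' $(x_k,y_0)$, which lies at distance exactly $d_X(x_k,x_0)$ from $(x_0,y_0)$. Suppose $(x',y')\in P$ with $d_Z((x',y'),(x_k,y_0))\le r$. Property~(2) at $(x',y')$, applied to the point $y_k$, gives $|f(x',y_k)-f(x',y')-w\cdot(\psi_j(y_k)-\psi_j(y'))|<\varepsilon\, d_Y(y_k,y')$ (here $V_j$ is the chart from property~(3); if $y'\notin V_j$ one uses instead a chart containing $y'$, so strictly one runs the argument for each of the countably many pieces $P\cap(X\times V_j)$, a harmless reduction; also $0<d_Y(y_k,y')<\delta$ for $k$ large by $d_X(x_k,x_0)\lesssim d_Y(y_k,y_0)$ and $r\le d_X(x_k,x_0)$). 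Since $f$ and $\psi_j$ are Lipschitz, the left-hand side differs from $|A_k^2|$ by at most $(2L_f+|w|L_{\psi_j})\,r$, and $d_Y(y_k,y')\le d_Y(y_k,y_0)+r$, whence
\[
|A_k^2|\ <\ \varepsilon\, d_Y(y_k,y_0)+\big(\varepsilon+2L_f+|w|L_{\psi_j}\big)\,r .
\]
Against the lower bound on $|A_k^2|$ this forces $r>\lambda_0\, d_X(x_k,x_0)$ with $\lambda_0:=\varepsilon(2L_f+1+|w|L_{\psi_j})\big/(\varepsilon+2L_f+|w|L_{\psi_j})\in(0,1)$. Hence $B\big((x_k,y_0),\lambda_0 d_X(x_k,x_0)\big)\cap P=\varnothing$, and since $(x_k,y_0)\to(x_0,y_0)$ with $d_Z((x_k,y_0),(x_0,y_0))=d_X(x_k,x_0)>0$, these balls witness porosity at $(x_0,y_0)$ with a constant independent of the point.

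The step I expect to be the crux is the choice of hole centre. Property~(1) and Lipschitzness force a bad point $(x_k,y_k)$ to sit much nearer to $(x_0,y_0)$ in the $X$-direction than in the $Y$-direction, so a $P$-free ball around $(x_k,y_k)$ would have radius only of order $d_X(x_k,x_0)$ while its distance to $(x_0,y_0)$ is of order $d_Y(y_k,y_0)$ — the ratio is too small for porosity. Re-centring at the shadow $(x_k,y_0)$ equalises the two scales; after that the only work is routine bookkeeping with the Lipschitz constants and the (harmless) chart-matching, together with the boundary cases $d_X(x_k,x_0)=0$ and $d_Y(y_k,y_0)=0$, which is where the precise form $2L_f+2+|w|L_{\psi_j}$ of the structural constant enters.
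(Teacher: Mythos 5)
Your argument is correct and follows the same basic strategy as the paper's: the porosity witness is centered at the ``shadow'' point $(x,y_0)$ rather than at the bad point $(x,y)$ itself, one uses property~(1) at $(x_0,y_0)$ to control the $X$-increment, property~(2) at the hypothetical nearby point of $P$ to control the $Y$-increment, and Lipschitz estimates to transfer between nearby points. Your reorganization --- peeling off $A_k^1$ first and reducing to the one-slice quantity $A_k^2$, then obtaining an explicit constant $\lambda_0=\varepsilon(2L_f+1+|w|L_{\psi_j})/(\varepsilon+2L_f+|w|L_{\psi_j})$ --- is a cosmetic variant of the paper's telescoping chain~(p1)--(p6), not a different method. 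The arithmetic checks out: since $\varepsilon<1$ one has $\lambda_0\ge\varepsilon$, so your hole is never smaller than the paper's, and the side observation $d_X(x_k,x_0)\lesssim d_Y(y_k,y_0)$ is used only for the harmless $\delta$-bookkeeping. The case $y_k=y'$ that you did not explicitly exclude is also harmless, since there the quantity $|f(x',y_k)-f(x',y')-w\cdot(\psi_j(y_k)-\psi_j(y'))|$ vanishes and the conclusion $r>\lambda_0 d_X(x_k,x_0)$ follows a fortiori.

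Where you differ is that you flag, and deal with, a chart-compatibility point that the paper passes over silently: property~(2) at the hypothetical point $(x',y')$ (the paper's $(a,b)$) only produces an estimate involving $\psi_{j'}$ for charts $V_{j'}$ containing $y'$ (resp.\ $b$), while the target expression $A_k^2$ is written in terms of the chart $V_j$ fixed in property~(3), and there is no reason for $y'$ (resp.\ $b$) to lie in $V_j$. Your reduction to the countably many pieces $P\cap(X\times V_j)$ resolves this cleanly, at the cost of concluding only that $P(v,w,\varepsilon,\delta,f)$ is $\sigma$-porous rather than porous. That weaker conclusion is all the downstream argument uses, since $P(f)$ is in any case a countable union of the sets $P(v,w,\varepsilon,\delta,f)$ and the doubling of $\mu_X\times\mu_Y$ kills $\sigma$-porous sets. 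In short: correct, essentially the same proof, with a bit more care on one detail.
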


\begin{proof}
Fix a set $P=P(v,w,\varepsilon, \delta,f)$ as in the statement and let $(x_{0}, y_{0})\in P$. Fix charts 
$U_{i}$ and $V_{j}$ as in (3). Let $(x,y)$ satisfy the estimates $0<d_{X}(x,x_{0})<\delta/2$ and 
$0<d_{Y}(y,y_{0})<\delta/2$ such that the inequality~\eqref{eq:bad-approach} in (3) holds. 
Notice that $U_{i}$ is a chart containing $x_{0}$ with $m_{i}=m$ and $V_{j}$ is a chart containing 
$y_{0}$ with $n_{j}=n$. Notice 
\[d_{X\times Y}((x_{0},y_{0}),(x,y_{0}))=d_{X}(x,x_{0}).\]
Hence to show that $P$ is porous it suffices to show that
\begin{equation}\label{willbeporous}
P\cap B_{X\times Y}((x,y_{0}),\varepsilon d_{X}(x,x_{0}))=\varnothing.
\end{equation}
Suppose not and fix $(a,b)\in P\cap B_{X\times Y}((x,y_{0}),\varepsilon d_{X}(x,x_{0}))$. We show how to obtain a contradiction.

By using $(x_{0},y_{0})\in P$ and applying property (1) from the definition of $P$, we see
\begin{equation}\label{p1}
|f(x,y_{0})-f(x_{0},y_{0})-v\cdot (\varphi_{i}(x)-\varphi_{i}(x_{0}))|<\varepsilon d_{X}(x,x_{0}).
\end{equation}`
Next notice that $d(b,y)\leq d(b,y_{0})+d(y_{0},y)<\delta$. Hence we can use $(a,b)\in P$ and property~(2) 
from the definition of $P$ to obtain
\begin{equation}\label{p2}
|f(a,y)-f(a,b)-w\cdot (\psi_{j}(y)-\psi_{j}(b))|<\varepsilon d_{Y}(y,b).
\end{equation}
Using the fact that $(a,b)\in B_{X\times Y}((x,y_{0}),\varepsilon d_{X}(x,x_{0}))$, it is straightforward to 
derive the following estimates
\begin{equation}\label{p3}
|f(a,y)-f(x,y)|\leq L_{f}\varepsilon d_{X}(x,x_{0}),
\end{equation}
\begin{equation}\label{p4}
|f(x,y_{0})-f(a,b)|\leq L_{f}\varepsilon d_{X}(x,x_{0}),
\end{equation}
\begin{equation}\label{p5}
|\psi_{j}(b)-\psi_{j}(y_{0})|\leq L_{\psi_{j}}\varepsilon d_{X}(x,x_{0}).
\end{equation}
\begin{equation}\label{p6}
d_{Y}(y,b)\leq d_{Y}(y,y_{0})+d_{Y}(y_{0},b)\leq d_{Y}(y,y_{0})+d_{X}(x,x_{0}).
\end{equation}
Combining \eqref{p1}--\eqref{p6} and applying the triangle inequality, it can be shown that
\begin{align*}
&|f(x,y)-f(x_{0},y_{0})- v\cdot (\varphi_{i}(x)-\varphi_{i}(x_{0})) - w\cdot (\psi_{j}(y)-\psi_{j}(y_{0}))|\\
&\qquad \leq \varepsilon (2L_{f}+2+|w|L_{\psi_{j}})d_{X}(x,x_{0})+\varepsilon d_{Y}(y,y_{0}).
\end{align*}
This contradicts the fact that $(x_{0},y_{0})$ together with $(x,y)$ satisfies the inequality~\eqref{eq:bad-approach} of
(3) in the definition of $P$. Therefore \eqref{willbeporous} is true. This shows that  $P$ is porous as claimed.
\end{proof}

Now define
\[
P(f)=\bigcup_{\substack{m\in \bbN\\ n\in \bbN}} \bigcup_{\substack{v\in \mathbb{Q}^{m}\\w\in \mathbb{Q}^{n}} }\bigcup_{\substack{\varepsilon \in (0,1)\cap \mathbb{Q}\\ \delta \in (0,1)\cap \mathbb{Q} }} P(v,w,\varepsilon,\delta,f).
\]
Clearly $P(f)\subset X\times Y$ is a $\sigma$-porous set. Since $\mu_X\times\mu_Y$ is doubling, it follows that 
$\mu_Z(P(f))=0$. This is because doubling measures assign measure zero to porous sets, which is well known 
and follows from the fact that the Lebesgue differentiation theorem holds whenever the underlying measure is 
doubling. For an explicit proof one could follow the steps in \cite{PS17}, which do not depend on the Carnot 
group structure in that paper.

\begin{lemma}\label{diff}
Fix $(x_{0},y_{0})\in X\times Y$ such that
\begin{enumerate}
\item For a chart $(U_{i}, \varphi_{i})$, $f$ is differentiable at $(x_{0},y_{0})$ in the $X$ direction with unique 
derivative $D_{X}f(x_{0},y_{0})$.
\item For a chart $(V_{j}, \psi_{j})$, $f$ is differentiable at $(x_{0},y_{0})$ in the $Y$ direction with unique 
derivative $D_{Y}f(x_{0},y_{0})$.
\item $(x_{0},y_{0})\notin P$.
\end{enumerate}
Then $f$ is differentiable with respect to the chart $(U_{i}\times V_{j}, \varphi_{i}\times \psi_{j})$ on $X\times Y$ 
with unique derivative $(D_{X}f(x_{0},y_{0}),D_{Y}f(x_{0},y_{0}))$.
\end{lemma}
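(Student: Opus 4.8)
The plan is to verify the differentiability limit
\[
\lim_{(x,y)\to(x_0,y_0)}\frac{\bigl|f(x,y)-f(x_0,y_0)-D_Xf(x_0,y_0)\cdot(\varphi_i(x)-\varphi_i(x_0))-D_Yf(x_0,y_0)\cdot(\psi_j(y)-\psi_j(y_0))\bigr|}{d_{X\times Y}((x,y),(x_0,y_0))}=0,
\]
using the exclusion $(x_0,y_0)\notin P(f)$ to kill the ``bad'' directions of approach. Fix a rational $\varepsilon\in(0,1)$; it suffices to show the difference quotient is at most a fixed multiple of $\varepsilon$ near $(x_0,y_0)$. First I would pick, via density, rational vectors $v\in\mathbb{Q}^{m_i}$ and $w\in\mathbb{Q}^{n_j}$ with $|v-D_Xf(x_0,y_0)|$ and $|w-D_Yf(x_0,y_0)|$ small enough (depending on $\varepsilon$, $L_{\varphi_i}$, $L_{\psi_j}$), and then a rational $\delta\in(0,1)$ small enough, so that conditions (1) and (2) in the definition of $P(v,w,\varepsilon,\delta,f)$ hold at $(x_0,y_0)$: by differentiability of $f$ in the $X$ and $Y$ directions, $|f(x,y_0)-f(x_0,y_0)-D_Xf(x_0,y_0)\cdot(\varphi_i(x)-\varphi_i(x_0))|$ is below $\tfrac\varepsilon2 d_X(x,x_0)$ for $d_X(x,x_0)$ small, while replacing $D_Xf(x_0,y_0)$ by $v$ costs only $|v-D_Xf(x_0,y_0)|\,L_{\varphi_i}\,d_X(x,x_0)$, and symmetrically in $Y$.

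Since $(x_0,y_0)\notin P(f)\supset P(v,w,\varepsilon,\delta,f)$ but (1) and (2) hold there, property (3) must fail; taking the charts in (3) to be our fixed $(U_i,\varphi_i)$ and $(V_j,\psi_j)$, this means there is a neighborhood of $(x_0,y_0)$ on which the negation of~\eqref{eq:bad-approach} holds, i.e.
\[
\bigl|f(x,y)-f(x_0,y_0)-v\cdot(\varphi_i(x)-\varphi_i(x_0))-w\cdot(\psi_j(y)-\psi_j(y_0))\bigr|\le\varepsilon\bigl(2L_f+2+|w|L_{\psi_j}\bigr)d_X(x,x_0)+\varepsilon\,d_Y(y,y_0).
\]
Then I would substitute $D_Xf(x_0,y_0)$ back for $v$ and $D_Yf(x_0,y_0)$ back for $w$, absorbing the extra terms $|v-D_Xf(x_0,y_0)|L_{\varphi_i}d_X(x,x_0)$ and $|w-D_Yf(x_0,y_0)|L_{\psi_j}d_Y(y,y_0)$, and bound $|w|\le|D_Yf(x_0,y_0)|+1$ so the coefficient stays bounded independently of $\varepsilon$. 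Using $d_X(x,x_0),d_Y(y,y_0)\le d_{X\times Y}((x,y),(x_0,y_0))$, this shows the difference quotient is $\le C\varepsilon$ in a neighborhood of $(x_0,y_0)$, with $C$ depending only on $L_f,L_{\varphi_i},L_{\psi_j},|D_Xf(x_0,y_0)|,|D_Yf(x_0,y_0)|$. As $\varepsilon$ runs over all small rationals, the limit vanishes, so $f$ is differentiable with respect to $(U_i\times V_j,\varphi_i\times\psi_j)$ with derivative $(D_Xf(x_0,y_0),D_Yf(x_0,y_0))$; uniqueness of this derivative is part of the definition of differentiability with respect to a chart, so nothing further is needed.

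The step I expect to be the main obstacle is the bookkeeping of constants across the two substitutions (true derivative versus rational approximant), in particular getting the order of the choices right: $\varepsilon$ first, then $v,w$ (chosen fine enough relative to $L_{\varphi_i},L_{\psi_j}$), then $\delta$ — so that every error term is genuinely controlled by a fixed multiple of $\varepsilon$, and the somewhat artificial coefficient $2L_f+2+|w|L_{\psi_j}$ in~\eqref{eq:bad-approach} is matched correctly. One should also check that restricting $\varepsilon$ and $\delta$ to rationals, as forced by the countable union defining $P(f)$, loses nothing, and handle the trivial degenerate cases where a chart map $\varphi_i$ or $\psi_j$ is constant.
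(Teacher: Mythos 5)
Your existence argument is essentially the paper's: fix a rational $\varepsilon$, pick rational approximants $v,w$ of the true directional derivatives fine enough relative to $L_{\varphi_i},L_{\psi_j}$, pick rational $\delta$ so that conditions (1) and (2) of $P(v,w,\varepsilon,\delta,f)$ hold at $(x_0,y_0)$, use $(x_0,y_0)\notin P(f)$ to force failure of condition (3), swap $D_Xf(x_0,y_0),D_Yf(x_0,y_0)$ back in for $v,w$ by the triangle inequality, and bound $|w|\leq |D_Yf(x_0,y_0)|+1$ so the coefficient is uniform as $\varepsilon\to 0$. (The paper orders the choices $\varepsilon,\delta,v,w$ while you order them $\varepsilon,v,w,\delta$; since neither $\delta$ nor $v,w$ depends on the other, both orderings are fine.)

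There is, however, a genuine gap in your treatment of uniqueness. You write that ``uniqueness of this derivative is part of the definition of differentiability with respect to a chart, so nothing further is needed,'' but that reads the definition backwards. The definition requires that there \emph{exist a unique} vector satisfying the differentiability limit; you have only exhibited one candidate, namely $(D_Xf(x_0,y_0),D_Yf(x_0,y_0))$, satisfying it. Nothing in the existence argument rules out a second vector also satisfying the limit, and in a metric measure space there is no a priori reason for the chart map $\varphi_i\times\psi_j$ to separate points linearly, so uniqueness is not automatic. You must verify it, and the hypotheses (1) and (2) are there precisely for this purpose. The paper's step is short: if some $(v,w)$ also satisfies the product differentiability limit, restricting to $y=y_0$ yields that $v$ satisfies the $X$-direction limit, so $v=D_Xf(x_0,y_0)$ by the assumed uniqueness of the $X$-derivative; restricting to $x=x_0$ gives $w=D_Yf(x_0,y_0)$ analogously. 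Add this short argument and your proof is complete.
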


\begin{proof}
Let $\varepsilon>0$ be rational. By (1) and (2) from the statement of the lemma, we can choose $\delta>0$ 
rational such whenever $0<d_{X}(x,x_{0})<\delta$ and $0<d_{Y}(y,y_{0})<\delta$,
\[|\,f(x,y_{0})-f(x_{0},y_{0})-D_{X}f(x_{0},y_{0})\cdot (\varphi_{i}(x)-\varphi_{i}(x_{0}))\,|<\varepsilon d_{X}(x,x_{0})/2,\]
\[|\,f(x_{0},y)-f(x_{0},y_{0})-D_{Y}f(x_{0},y_{0})\cdot (\psi_{j}(y)-\psi_{j}(y_{0}))\,|<\varepsilon d_{Y}(y,y_{0})/2.\]
Fix $v\in \mathbb{Q}^{m_{i}}$ and $w\in \mathbb{Q}^{n_{j}}$ such that
\[|\,v-D_{X}f(x_{0},y_{0})\,|\leq \varepsilon/2L_{\varphi_{i}}\]
and
\[|\,w-D_{Y}f(x_{0},y_{0})\,|\leq \varepsilon/2L_{\psi_{j}}.\]
It is easy to check using the triangle inequality that (1) and (2) from the definition of $P(v,w,\varepsilon,\delta,f)$ hold. 
Since $(x_{0}, y_{0})\notin P$ it follows that (3) cannot hold. Hence 
\begin{align*}
&|\, f(x,y)-f(x_{0},y_{0})-v\cdot (\varphi_{i}(x)-\varphi_{i}(x_{0}))-w\cdot (\psi_{j}(y)-\psi_{j}(y_{0}))\,|\\
& \qquad \leq \varepsilon (2L_{f}+2+|w|L_{\psi_{j}})d_{X}(x,x_{0})+\varepsilon d_{Y}(y,y_{0}).
\end{align*}
Using again the triangle inequality gives for $0<d(x,x_{0})<\delta$ and $0<d(y,y_{0})<\delta$
\begin{align*}
&|\, f(x,y)-f(x_{0},y_{0})-D_{X}f(x_{0},y_{0})\cdot (\varphi_{i}(x)-\varphi_{i}(x_{0})) \\
&\hskip6cm -D_{Y}f(x_{0},y_{0})\cdot (\psi_{j}(y)-\psi_{j}(y_{0}))\, |\\
& \leq \varepsilon (2L_{f}+3+|w|L_{\psi_{j}})d_{X}(x,x_{0})+2\varepsilon d_{Y}(y,y_{0}).\\
& \leq \varepsilon (3L_{f}+4+|D_{Y}f(x_{0},y_{0})|L_{\psi_{j}})d_{X}(x,x_{0})+2\varepsilon d_{Y}(y,y_{0}).
\end{align*}
This shows that $f$ is differentiable at $(x_{0},y_{0})$ with derivative with respect to the given chart given by 
$(D_{X}f(x_{0},y_{0}), D_{Y}f(x_{0},y_{0}))$.

To show uniqueness of the derivative, suppose $v\in \bbR^{m_{i}}$ and $w\in \bbR^{n_{j}}$ such that
\[
\lim_{(x,y)\to (x_{0}, y_{0})} \frac{|f(x,y)-f(x_{0},y_{0})-v\cdot (\varphi_{i}(x)-\varphi_{i}(x_{0}))
-w\cdot (\psi_{j}(y)-\psi_{j}(y_{0}))|}{d_{X}(x,x_{0})+d_{Y}(y,y_{0})}=0.
\]
Substituting $y=y_{0}$ gives
\[\lim_{x\to x_{0}} \frac{|f(x,y_{0})-f(x_{0},y_{0})-v\cdot (\varphi_{i}(x)-\varphi_{i}(x_{0}))|}{d_{X}(x,x_{0})}=0.\]
Uniqueness of the derivative $D_{X}f(x_{0},y_{0})$ in the $X$ direction yields $v=D_{X}f(x_{0},y_{0})$. 
Similarly substituting $x=x_{0}$ yields $w=D_{Y}f(x_{0},y_{0})$. This proves uniqueness of the derivative 
$(D_{X}f(x_{0},y_{0}),D_{Y}f(x_{0},y_{0}))$.
\end{proof}

\begin{lemma}\label{Fubini}
Let $(U_{i}, \varphi_{i})$ and $(V_{j}, \psi_{j})$ be charts on $X$ and $Y$ respectively. Then both 
$D_{X}f(x_{0},y_{0})$ and $D_{Y}f(x_{0},y_{0})$ exist for $\mu_{X}\times \mu_{Y}$ almost every 
$(x_{0}, y_{0})\in U_{i}\times V_{j}$.
\end{lemma}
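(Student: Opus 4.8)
The plan is to reduce the statement to differentiability of Lipschitz functions on each factor — which the Cheeger structures on $X$ and $Y$ already supply — together with Fubini's theorem; the only genuinely new point is a measurability check, which I would carry out in the spirit of the proof of Corollary~\ref{cor:Banach}. First, fixing $y_{0}\in V_{j}$, the slice $f^{y_{0}}:=f(\cdot,y_{0})\colon X\to\R$ is Lipschitz on $X$ (with constant at most $L_{f}$) since $f$ is Lipschitz on $X\times Y$; hence by Definition~\ref{deff:Ch-Diff-Struct}, applied to the chart $(U_{i},\varphi_{i})$, there is a $\mu_{X}$-null set off of which $f^{y_{0}}$ is differentiable with respect to $\varphi_{i}$ with a unique derivative, and that derivative is by definition $D_{X}f(x_{0},y_{0})$. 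Thus, writing
\[
E_{X}:=\{(x_{0},y_{0})\in U_{i}\times V_{j}\ :\ D_{X}f(x_{0},y_{0})\text{ does not exist}\},
\]
every vertical slice of $E_{X}$ is $\mu_{X}$-null, and symmetrically every horizontal slice of the analogously defined set $E_{Y}$ is $\mu_{Y}$-null. If I know in addition that $E_{X}$ and $E_{Y}$ are $\mu_{X}\times\mu_{Y}$-measurable, then Fubini's theorem forces $(\mu_{X}\times\mu_{Y})(E_{X})=(\mu_{X}\times\mu_{Y})(E_{Y})=0$, and consequently $D_{X}f$ and $D_{Y}f$ both exist at $\mu_{X}\times\mu_{Y}$-a.e.\ point of $U_{i}\times V_{j}$, which is the assertion.

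So the work is to show that $E_{X}$ is $\mu_{X}\times\mu_{Y}$-measurable, the case of $E_{Y}$ being identical with the roles of the factors swapped. I would mimic the proof of Corollary~\ref{cor:Banach}. For $q\in\R^{m_{i}}$ set
\[
Q_{q}(x;x_{0},y_{0}):=\frac{|f(x,y_{0})-f(x_{0},y_{0})-q\cdot(\varphi_{i}(x)-\varphi_{i}(x_{0}))|}{d_{X}(x,x_{0})},
\]
which is continuous in $x$ on $X\setminus\{x_{0}\}$ because $f$, $\varphi_{i}$ and $d_{X}$ are continuous. Fix a countable dense set $\{x_{k}\}_{k\in\bbN}\subset X$. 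By this continuity, for fixed $(x_{0},y_{0})$ the supremum of $Q_{q}(x;x_{0},y_{0})$ over all $x$ with $0<d_{X}(x,x_{0})<\delta$ equals its supremum over those $x_{k}$ with $0<d_{X}(x_{k},x_{0})<\delta$; combining this with the compactness of closed balls of $\R^{m_{i}}$ (to pass from ``for each $\varepsilon$ some rational vector works'' to ``a single vector works for all $\varepsilon$'', exactly as in Corollary~\ref{cor:Banach}) I would obtain, for each $R\in\bbN$,
\begin{align*}
D_{R}&:=\{(x_{0},y_{0})\in U_{i}\times V_{j}\ :\ D_{X}f(x_{0},y_{0})\text{ exists and }|D_{X}f(x_{0},y_{0})|\le R\}\\
&=\bigcap_{\varepsilon\in\Q^{+}}\ \bigcup_{\delta\in\Q^{+}}\ \bigcup_{\substack{q\in\Q^{m_{i}}\\|q|\le R}}\ \bigcap_{k\in\bbN}D_{R}(\varepsilon,\delta,q,x_{k}),
\end{align*}
where $D_{R}(\varepsilon,\delta,q,x_{k})$ is the set of $(x_{0},y_{0})\in U_{i}\times V_{j}$ such that $Q_{q}(x_{k};x_{0},y_{0})\le\varepsilon$ whenever $0<d_{X}(x_{k},x_{0})<\delta$. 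Each such $D_{R}(\varepsilon,\delta,q,x_{k})$ is the union of the closed set $\{(x_{0},y_{0}):d_{X}(x_{k},x_{0})\notin(0,\delta)\}$ with the closed set $\{(x_{0},y_{0}):|f(x_{k},y_{0})-f(x_{0},y_{0})-q\cdot(\varphi_{i}(x_{k})-\varphi_{i}(x_{0}))|\le\varepsilon\,d_{X}(x_{k},x_{0})\}$ (closed again by continuity), hence is Borel; therefore $D_{R}$ is Borel, and so is $\bigcup_{R\in\bbN}D_{R}=\{(x_{0},y_{0}):D_{X}f(x_{0},y_{0})\text{ exists}\}$. This shows $E_{X}$ is $\mu_{X}\times\mu_{Y}$-measurable, and symmetrically so is $E_{Y}$.

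Feeding this back into Fubini's theorem as above completes the proof. The main obstacle is exactly the measurability step: one must be careful that the uncountable ``$\sup$ over $x$ near $x_{0}$'' can be replaced by a countable condition (which is where Lipschitzness, hence continuity, of $f$ enters) and that the compactness argument faithfully encodes the uniqueness of the derivative. Both of these are handled precisely as in the proof of Corollary~\ref{cor:Banach}, so beyond this the argument is routine.
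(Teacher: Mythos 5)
Your proof is correct and follows essentially the same route as the paper's: fix a slice, use the Cheeger structure on each factor, express the set of differentiability points as a countable intersection/union of sets (using separability of $X$ to replace the uncountable supremum by a countable one and compactness of closed balls to recover uniqueness of the derivative), and then apply Fubini. The only negligible slip is calling $D_R(\varepsilon,\delta,q,x_k)$ Borel rather than merely product-measurable, since $U_i\times V_j$ need only be measurable, not open.
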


\begin{proof}
For each fixed $y_{0}\in V_{j}$, $D_{X}f(x_{0},y_{0})$ exists for $\mu_{X}$ almost every $x_{0}\in U_{i}$ because 
$x\mapsto f(x,y_{0})$ is a Lipschitz map $X\to \bbR$. The claimed result will follow for $D_{X}f(x_{0},y_{0})$ by 
Fubini's theorem once we show that the set where $D_{X}f(x_{0},y_{0})$ exists is a measurable subset of $X\times Y$ 
and that $(x_{0},y_{0})\mapsto D_{X}f(x_{0},y_{0})$ is a measurable function on that set. To that end, first notice 
that we can write
\[
\{(x_{0},y_{0})\in U_{i}\times V_{j}: D_{X}f(x_{0},y_{0}) \mbox{ exists with respect to } \varphi_{i}\}=\bigcup_{N=1}^{\infty} A_{N},
\]
where $A_{N}$ is the set of $(x_{0}, y_{0})\in U_{i}\times V_{j}$ for which $D_{X}f(x_{0},y_{0})$ exists with 
respect to $\varphi_{i}$ and belongs to the closed ball $\overline{B(0,N)}\subset \mathbb{R}^{m_{i}}$. We claim 
$A_{N}=\widetilde{A}_{N}$, where
\begin{align*}
\tilde{A}_{N}=\bigcap_{\substack{\varepsilon > 0\\ \varepsilon \in \mathbb{Q}}} \bigcup_{\substack{\delta>0\\ \delta \in \mathbb{Q}}} \bigcup_{\substack{q\in \mathbb{Q}^{m_{i}}\\|q|\leq N}} &\{(x_{0},y_{0})\in U_{i}\times V_{j}:\\
&\sup_{0<d_{X}(x,x_{0})<\delta} \frac{|f(x,y_{0})-f(x_{0},y_{0})-q\cdot (\varphi_{i}(x)-\varphi_{i}(x_{0}))|}{d_{X}(x,x_{0})}\leq \varepsilon \}.
\end{align*}
Clearly $A_{N}\subset \widetilde{A}_{N}$. For the opposite inclusion, suppose $(x_{0},y_{0})\in \widetilde{A}_{N}$. 
Then there exists $\delta_{n}\downarrow 0$ and a sequence $q_{n}\in \overline{B}(0,N)$ such that
\[
\sup_{0<d_{X}(x,x_{0})<\delta_{n}} \frac{|f(x,y_{0})-f(x_{0},y_{0})-q_{n}\cdot (\varphi_{i}(x)-\varphi_{i}(x_{0}))|}{d_{X}(x,x_{0})}<\frac{1}{n}.
\]
By taking a subsequence if necessary, we may assume that $q_{n}\to q\in \overline{B}(0,N)$. Then the triangle inequality yields
\[
\sup_{0<d_{X}(x,x_{0})<\delta_{n}} \frac{|f(x,y_{0})-f(x_{0},y_{0})-q\cdot (\varphi_{i}(x)-\varphi_{i}(x_{0}))|}{d_{X}(x,x_{0})}
<\frac{1}{n} + |q_{n}-q|L_{\varphi_{i}}.
\]
Since the right side converges to $0$ as $n\to \infty$, this shows $D_{X}f(x_{0},y_{0})$ exists and equals 
$q\in \overline{B}(0,N)$. Hence $(x_{0},y_{0})\in A_{N}$, which shows that $A_{N}=\widetilde{A}_{N}$. The fact that 
$\widetilde{A}_{N}$ is measurable follows because $X$ is separable, so one can equivalently consider the 
supremum over a countable dense set of $x$ with $0<d_{X}(x,x_{0})<\delta$.

A similar argument with $\overline{B}(0,N)$ replaced by other balls shows that $D_{X}f(x_{0},y_{0})$ is measurable on its 
domain. The argument for $D_{Y}f(x_{0},y_{0})$ is analogous with $X$ replaced by $Y$.
\end{proof}

We can now prove Theorem \ref{diffproduct}.

\begin{proof}[Proof of Theorem \ref{diffproduct}]
The measure $\mu_{X}\times \mu_{Y}$ is doubling, so porous sets have measure zero. Hence for any Lipschitz 
$f\colon X\times Y\to \bbR$ we have $\mu_{X}\times \mu_{Y}(P(f))=0$. Fix a chart $(U_{i},\varphi_{i})$ and 
$(V_{j},\psi_{j})$. By applying Lemma \ref{Fubini}, we see that almost every point $(x_{0},y_{0})\in U_{i}\times V_{j}$ has 
the following properties:
\begin{enumerate}
\item Both $D_{X}f(x_{0},y_{0})$ and $D_{Y}f(x_{0},y_{0})$ exist and are unique.
\item $(x_{0},y_{0})\notin P$.	
\end{enumerate}
Hence, by Lemma \ref{diff}, $f$ is differentiable at $(x_{0},y_{0})$ with respect to the chart 
$(U_{i}\times V_{j}, \varphi_{i}\times \psi_{j})$ with unique derivative $(D_{X}f(x_{0},y_{0}),D_{Y}f(x_{0},y_{0}))$.
\end{proof}

\begin{remark}\label{rm:Da-deff}
When $Y=\R$ equipped with the Euclidean metric and the measure $y^a\, dy$, the corresponding tensorization of the differential
structure $D_X$ on $X$ and the Euclidean differential structure on $Y$ yields the differential structure on $Z$ denoted by
$D_a$ and the corresponding Dirichlet form is denoted $\Ead$. This is not to be confused with the fractional 
Dirichlet form $\mathcal{E}_\theta$ referred to in~\eqref{eq:frac-Dirich-1} above. 
\end{remark}

\begin{lemma}\label{lem:tensor-innerprod-uppergrad}
With the differential structure $D_a$ as in Remark~\ref{rm:Da-deff}, we consider the inner product on this structure given by
\[
\langle D_au(x,y),D_av(x,y)\rangle_{(x,y)}:=\langle D_Xu(x,y),D_Xv(x,y)\rangle_x+\partial_yu(x,y)\, \partial_y v(x,y).
\]
Then there is a constant $C>0$ such that whenever $u$ is a Lipschitz function on $Z$, we have
\[
\frac{1}{C} g_u(x,y)^2\le \langle D_au(x,y),D_au(x,y)\rangle_{(x,y)}\le C\, g_u(x,y)^2
\]
where $g_u$ is the minimal weak upper gradient of $u$ (equivalently, $g_u=\Lip u$ $\mu_a$-a.e.~in $Z$). 
\end{lemma}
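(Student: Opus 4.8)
The plan is to reduce the two-sided estimate to a pointwise comparison of the local Lipschitz constant of $u$ on $Z$ with those of its coordinate slices, prove the easy inequality by hand, and obtain the hard one from the Poincar\'e construction already used for Lemma~\ref{lem:tensor-N1p}.

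First I would unwind the inner product. By Theorem~\ref{diffproduct}, for $\mu_a$-a.e.\ $(x,y)$ one has $D_au(x,y)=(D_Xu(x,y),\partial_yu(x,y))$, where $D_Xu(x,y)$ is the Cheeger differential (with respect to the relevant $X$-chart) of the slice $u(\cdot,y)$ at $x$ and $\partial_yu(x,y)$ is the ordinary derivative of $u(x,\cdot)$ at $y$; both slices are Lipschitz since $u$ is. Cheeger's inner-product estimate on $X$, applied to $u(\cdot,y)$ and combined with the identity $g_{u(\cdot,y)}(x)=\Lip_X u(x,y)$ for $\mu_X$-a.e.\ $x$ (valid because $X$ is doubling and supports a $2$-Poincar\'e inequality), gives $\langle D_Xu(x,y),D_Xu(x,y)\rangle_x\simeq\Lip_X u(x,y)^2$ for $\mu_a$-a.e.\ $(x,y)$, where $\Lip_X u(x,y)$ denotes the pointwise Lipschitz constant of $u(\cdot,y)$ at $x$; likewise $|\partial_yu(x,y)|$ equals the pointwise Lipschitz constant of $u(x,\cdot)$ at $y$ for a.e.\ $(x,y)$. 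Since also $g_u=\Lip u$ $\mu_a$-a.e.\ on $Z$ (again $Z$ is doubling and $2$-Poincar\'e, Remark~\ref{remk:Cartesian-PI}), the lemma reduces to showing
\[
\Lip u(x,y)^2\;\simeq\;\Lip_X u(x,y)^2+|\partial_yu(x,y)|^2\qquad\text{for }\mu_a\text{-a.e.\ }(x,y)\in Z.
\]

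The inequality ``$\gtrsim$'' is immediate: restricting the $\limsup$ defining $\Lip u(x,y)$ to variation in $X$, resp.\ in $y$, gives $\Lip_X u(x,y)\le\Lip u(x,y)$ and $|\partial_yu(x,y)|\le\Lip u(x,y)$ wherever $\partial_yu$ exists, so the sum of squares is at most $2\Lip u(x,y)^2$. For ``$\lesssim$'' I would produce a $2$-weak upper gradient of $u$ on $Z$ dominated by a multiple of $\Lip_X u(x,y)+|\partial_yu(x,y)|$; minimality of $g_u=\Lip u$ then finishes. Put $g_1(x,y):=\Lip_X u(x,y)$ and let $g_2(x,y)$ be the pointwise Lipschitz constant of $u(x,\cdot)$ at $y$, which agrees a.e.\ with $|\partial_yu|$. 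Both are bounded by the Lipschitz constant of $u$, hence locally in $L^2(Z,\mu_a)$; $g_2$ is jointly measurable by the argument in Corollary~\ref{cor:Banach}, and $g_1$ by the same countable-dense-subset device. Moreover $g_1(\cdot,y)$ is an honest upper gradient of the Lipschitz function $u(\cdot,y)$ on $X$ for every $y$, and $g_2(x,\cdot)$ is an honest upper gradient of the Lipschitz function $u(x,\cdot)$ on $(\R,|y|^a\,dy)$ for every $x$. Thus $u$ together with $g_1,g_2$ meets the hypotheses used in the proof of Lemma~\ref{lem:tensor-N1p}: running that argument --- combining the $2$-Poincar\'e inequalities of $X$ and of $(\R,|y|^a\,dy)$ to see that $(u,g_1+g_2)$ satisfies a local $2$-Poincar\'e inequality on $Z$, then invoking \cite[Theorem~10.3.4]{HKSTbook} --- yields a constant $C$ with $C(g_1+g_2)$ a $2$-weak upper gradient of $u$ on $Z$ (no modification of $u$ is needed, as $u$ is continuous). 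Exhausting $Z$ by balls handles its unboundedness.

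The step I expect to be the main obstacle is this direction ``$\lesssim$''. A direct pointwise argument fails: the mixed second difference $u(x',y')-u(x',y)-u(x,y')+u(x,y)$ is a priori only $O(\Lip u\cdot|y'-y|)$, not $o(d_Z)$, so the two slicewise first-order expansions cannot be patched together by hand, and one genuinely needs the two-sided Poincar\'e input on both factors --- which is exactly what the proof of Lemma~\ref{lem:tensor-N1p} supplies. The remaining ingredients (Cheeger's identification of $\Lip$ with the minimal weak upper gradient on $2$-Poincar\'e spaces, Cheeger's inner-product comparison, and the joint measurability of the slice dilations) are routine.
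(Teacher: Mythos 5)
Your proof is correct, but it takes a different route from the paper's. The paper's proof is a one-liner: it invokes \cite[Theorem~3.4]{APS}, which gives the exact pointwise identity
\[
\Lip u(x,y)^2 = \limsup_{w\to x}\frac{|u(w,y)-u(x,y)|^2}{d_X(w,x)^2} + |\partial_y u(x,y)|^2
\]
for $\mu_a$-a.e.\ $(x,y)$, and then combines this with Cheeger's estimate from \cite{Ch} on $X$. You instead re-derive a two-sided comparison (with a multiplicative constant) in place of the exact identity: the easy direction $\gtrsim$ by restricting the $\limsup$ to horizontal and vertical variation, and the hard direction $\lesssim$ by showing that $\Lip_X u + |\partial_y u|$ is (up to a constant) a $2$-weak upper gradient of $u$ on $Z$, via the Poincar\'e computation already carried out in Lemma~\ref{lem:tensor-N1p} combined with \cite[Theorem~10.3.4]{HKSTbook}, and then invoking minimality of $g_u = \Lip u$. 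This trades the sharp APS identity for a more self-contained argument using only machinery already present elsewhere in the paper, which is a reasonable exchange since the lemma only needs comparability. Your remark that the mixed second difference is only $O(\Lip u\cdot|y'-y|)$ rather than $o(d_Z)$, so that a naive pointwise patching of the slice expansions fails and one genuinely needs Poincar\'e input, is exactly the right diagnosis of why APS (or the Poincar\'e route) is unavoidable here. One small point to make explicit if you write this up: after \cite[Theorem~10.3.4]{HKSTbook} produces a modification $f_0$ of $u$ with $C(g_1+g_2)$ as a weak upper gradient, you should note that $u$ is continuous and hence quasi-continuous, so it agrees with $f_0$ outside a set of zero $2$-capacity and therefore has the same weak upper gradients; you flagged this ("no modification of $u$ is needed"), which is the right move.
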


\begin{proof}
From~\cite[Theorem~3.4]{APS} we know that
\[
\Lip u(x,y)^2=\limsup_{w\to x}\frac{|u(w,y)-u(x,y)|^2}{d_X(w,x)^2}+|\partial_y u(x,y)|^2
\]
for $\mu_a$-a.e.~$(x,y)\in Z$ when $u$ is locally Lipschitz continuous. Combining this with the results from~\cite{Ch}
completes the proof.
\end{proof}

\section{Potential Theory in Relation to the Domain \texorpdfstring{$Z_+\subset Z$}{Zplus}}\label{Sect:Traces}

We fix $-1<a<1$ and set $\theta= \frac{1-a}{2}$. 
The central function space on $X$ from our point of view is the Besov space $B^\theta_{2,2}(X)$. 
Recall from~\eqref{eq:BesovEnergy} that this space consists of functions
$f\in L^1_{loc}(X)$ such that 
\[
\Vert f\Vert_{B^\theta_{2,2}(X)}^2:=\int_X\int_X\frac{|f(y)-f(x)|^2}{d_X(y,x)^{2\theta}\mu_X(B(y,d_X(y,x)))}\, d\mu_X(x)\, d\mu_X(y)
\]
is finite.

Recall Remark~\ref{remk:Cartesian-PI}.
It is known from the work of Maly~\cite{Maly} that when $\Omega$ is a bounded domain in a metric measure space 
equipped with a doubling measure supporting a Poincar\'e inequality and $\partial\Omega$ is equipped with a co-dimension
$\tau$-Hausdorff measure induced by the measure on $\Omega$, and if $\Omega$ is a uniform domain, then the
trace space of the Sobolev space $N^{1,2}(\Omega)$ is $B^\theta_{2,2}(\partial\Omega)\cap L^2(X)$ for a suitable choice of $\theta$.
In our setting, the domain we are interested in is $Z_+$, but even when $X$ is bounded, $Z_+$ is unbounded. 
We will show in this section that $Z_+$ is a uniform
domain in $Z$ and that a homogeneous version of Maly's theorem~\cite{Maly} holds true. This gives the desired link between
the Dirichlet space $D^{1,2}(Z_+)$ and $B^\theta_{2,2}(X)$ where $\theta=(1-a)/2$.
We first need the following proposition, which also
demonstrates the co-dimensionality between  the measures $\mu_a$ on $Z_+$ and
$\mu_X$ on $X$.

\begin{prop}\label{prop:unif+PI}
Fix $-1<a<1$. Then the 
domain $Z_+:=X\times (0,\infty)$ is a uniform domain in $Z$, and for each $x\in X$ and $r>0$, we have
\begin{equation}\label{eq:co-dim-measure}
\mu_X(B(x,r))\simeq \frac{\mu_a(B_Z((x,0), r))}{r^{1+a}}. 
\end{equation}
\end{prop}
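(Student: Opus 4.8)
\emph{Proof plan.} I would establish the two assertions of the proposition separately: the co-dimension identity~\eqref{eq:co-dim-measure} is a direct integration, and the uniformity of $Z_+$ follows from an explicit three-leg curve built from the quasiconvexity of $X$.

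\emph{The co-dimension estimate.} Since $d_Z$ is the $\ell^2$-combination of $d_X$ and the Euclidean metric, $B_Z((x,0),r)=\{(w,y)\in Z:\ d_X(x,w)^2+y^2<r^2\}$. Writing $d\mu_a(w,y)=|y|^a\,dy\,d\mu_X(w)$ and applying Fubini's theorem, I get
\[
\mu_a(B_Z((x,0),r))=\int_{B(x,r)}\Big(\int_{|y|<\sqrt{r^2-d_X(x,w)^2}}|y|^a\,dy\Big)d\mu_X(w)=\frac{2}{1+a}\int_{B(x,r)}\big(r^2-d_X(x,w)^2\big)^{\frac{1+a}{2}}d\mu_X(w),
\]
the inner integral being finite precisely because $a>-1$. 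As $1+a>0$, the integrand is $\le r^{1+a}$, which gives the upper bound $\mu_a(B_Z((x,0),r))\le\frac{2}{1+a}\,r^{1+a}\mu_X(B(x,r))$. For the reverse inequality I restrict the outer integral to $B(x,r/2)$, where $r^2-d_X(x,w)^2\ge\tfrac34 r^2$ and hence the integrand is $\ge(3/4)^{(1+a)/2}r^{1+a}$, and then use the doubling property of $\mu_X$ to bound $\mu_X(B(x,r/2))$ below by a fixed multiple of $\mu_X(B(x,r))$. Combining the two bounds is exactly~\eqref{eq:co-dim-measure}.

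\emph{Uniformity of $Z_+$.} The key preliminary remark is that $Z\setminus Z_+=X\times(-\infty,0]$ and $d_Z$ dominates $|y_1-y_2|$, so $\delta_{Z_+}((x,y))=d_Z((x,y),Z\setminus Z_+)=y$ for every $(x,y)\in Z_+$; the distance to the complement is exactly the height coordinate. Recall that $X$, being complete, doubling, and supporting a $2$-Poincar\'e inequality, is $L$-quasiconvex for some $L\ge1$. Given $p=(x_1,y_1)$ and $q=(x_2,y_2)$ in $Z_+$, put $R=d_X(x_1,x_2)$ and $h=\max\{y_1,y_2,R\}$, and let $\gamma$ be the (rectifiable) concatenation of the vertical segment from $(x_1,y_1)$ to $(x_1,h)$, the curve $s\mapsto(\sigma(s),h)$ where $\sigma$ is an $L$-quasiconvex arclength-parametrized curve in $X$ from $x_1$ to $x_2$ (so of length $\ell\le LR$), and the vertical segment from $(x_2,h)$ to $(x_2,y_2)$. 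Every point of $\gamma$ has positive height, so $\gamma\subset Z_+$. Using $h-y_i\le\max\{|y_1-y_2|,R\}\le d_Z(p,q)$ and $\ell\le LR\le L\,d_Z(p,q)$ gives $\ell(\gamma)\le(2+L)\,d_Z(p,q)$. For the cigar condition, a point $z=(x_1,t)$ on the first vertical leg satisfies $\delta_{Z_+}(z)=t\ge t-y_1=\ell(\gamma_{p,z})$, and symmetrically on the last leg; a point $z=(\sigma(s),h)$ on the horizontal leg satisfies $\delta_{Z_+}(z)=h\ge\max\{R,|y_1-y_2|\}\ge\tfrac{1}{\sqrt2}\,d_Z(p,q)\ge\tfrac{1}{\sqrt2(2+L)}\ell(\gamma)\ge\tfrac{1}{\sqrt2(2+L)}\min\{\ell(\gamma_{p,z}),\ell(\gamma_{q,z})\}$. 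Hence $Z_+$ is $A$-uniform for a constant $A$ depending only on the quasiconvexity constant $L$.

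\emph{Main obstacle.} There is no deep difficulty here; the only point needing care is that $X$ need not be geodesic, so the horizontal leg of $\gamma$ has to be taken merely quasiconvex, and one must track how its length bound $\ell\le LR$ feeds into the cigar estimate along that leg. Once $\delta_{Z_+}$ is identified with the height coordinate, the rest is bookkeeping with constants, and the co-dimension estimate is a one-line computation.
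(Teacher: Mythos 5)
Your proof is correct and follows essentially the same strategy as the paper: both build a three-leg ``up--over--down'' uniform curve using the quasiconvexity of $X$ and both compute the $\mu_a$-mass of a ball centered on $X\times\{0\}$ by integrating out the $y$-variable. The only cosmetic difference is that the paper first replaces $d_Z$ with the biLipschitz-equivalent $\ell^\infty$ product metric (which turns the ball into an exact cylinder and the co-dimension estimate into an equality, and makes $\delta_{Z_+}$ manifestly the height), whereas you work with the $\ell^2$ metric directly, deriving the two-sided comparison via Fubini, the monotonicity $(r^2-d_X(x,w)^2)^{(1+a)/2}\le r^{1+a}$, and doubling.
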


\begin{proof} 
The metric $d_\infty$ on $Z$ given by
$d_\infty((x_1,y_2),(x_2,y_2))=\max\{d_X(x_1,x_2),|y_1-y_2|\}$ is biLipschitz with respect to the metric $d_Z$, and it is 
convenient to consider this metric here. So in this proof $d_Z$ will denote $d_\infty$. 

Note that $Z_+$ cannot be a John domain even if $X$ is itself bounded, for there is
no reasonable point acting as the John center. However, we show that $Z_+$ is a uniform domain. 
Since $\mu_X$ is doubling on $X$ and supports a $2$-Poincar\'e 
inequality,  we know that there is a constant $C_q\ge 1$ such that $X$ is $C_q$-quasiconvex. 

Let $(x_1,y_1), (x_2,y_2)\in Z_+$. Without loss of generality, we assume that $0<y_1\le y_2$. Let
$\beta_1:[0,y_2-y_1+d(x_1,x_2)]\to Z_+$ be given by $\beta_1(t)=(x_1,y_1+t)$, and 
$\beta_3:[0,d(x_1,x_2)]\to Z_+$ be given by $\beta_3(t)=(x_2,d(x_1,x_2)+y_2-t)$. We also fix a
$C_q$-quasiconvex curve $\widehat{\beta_2}:[0,L]\to X$ in $X$ that starts from $x_1$ and ends at $x_2$, and we
set $\beta_2:[0,L]\to Z_+$ by $\beta_2(t)=(\widehat{\beta_2}(t),d(x_1,x_2)+y_2)$. The concatenation of
these three curves, $\beta_1+\beta_2+\beta_3$, is denoted by $\gamma$. Then the trajectory of $\gamma$ 
lies entirely in $Z_+$, starts at $(x_1,y_1)$, and ends at $(x_2,y_2)$. We wish to show that $\gamma$ is a 
uniform curve. 

If $(x,y)\in \beta_1$, then $x=x_1$ and
\[
\distz((x,y), X)=y\ge y-y_1=\ell(\beta_1\vert_{[0,t]})=\ell(\gamma_{(x_1,y_1),(x,y)}).
\]
If $(x,y)\in\beta_3$, then $x=x_2$ and
\[
\distz((x,y),X)=y\ge y-y_2=\ell(\gamma_{(x_2,y_2),(x,y)}).
\]
If $(x,y)\in \beta_2$, then $y=d(x_1,x_2)+y_2$ and so
\[
\distz((x,y),X)=d(x_1,x_2)+y_2.
\]
On the other hand,
\[
\ell(\gamma_{(x,y),(x_2,y_2)})=\ell((\widehat{\beta_2})_{x,x_2})+d(x_1,x_2)\le (1+C_q)d(x_1,x_2).
\]
Therefore in this case,
\[
(1+C_q)\, \distz((x,y),X)\ge \ell(\gamma_{(x,y),(x_2,y_2)}).
\]
Moreover, 
\begin{align*}
\ell(\gamma)=\ell(\beta_1)+\ell(\widehat{\beta_2})+\ell(\beta_3)
    &=(y_2-y_1+d(x_1,x_2))+\ell(\widehat{\beta_2})+d(x_1,x_2)\\
    &\le (y_2-y_1)+(2+C_q)d(x_1,x_2)\\
    &\le 2(2+C_q)d_Z((x_1,y_1),(x_2,y_2)).
\end{align*}
As $1+C_q\le 2(2+C_q)$, it follows that $\gamma$ is a $2(2+C_q)$-uniform curve in $\Omega$ with end points
$(x_1,y_1)$ and $(x_2,y_2)$. Thus $Z_+$ is a uniform domain.

Finally, we prove the codimensionality condition that is the last part of the proposition.

For $x_0\in X$ and $r>0$ note that $B_Z((x_0,0),r)\cap Z_+=B_X(x_0,r)\times(0,r)$, and so
\begin{equation}\label{eq:co-dim}
 \mu_a(B_Z((x_0,0),r)\cap Z_+)=\frac{r^{1+a}}{1+a} \mu_X(B_X(x_0,r)).
\end{equation}
If $A\subset X=X\times\{0\}=\partial Z_+$ is a Borel set and $\tau>0$, then from~\eqref{eq:co-dim-tau-Hausdorff}
we know that the co-dimension $\tau$ Hausdorff
measure $\mathcal{H}^{*,\tau}(A)$ is the number
\[
\lim_{\eps\to 0^+}\inf\bigg\lbrace
 \sum_i\frac{\mu_a(B_i)}{\rad(B_i)^\tau}\, :\, A\subset \bigcup_i B_i\text{ and }\rad(B_i)<\eps
 \bigg\rbrace. 
\]
Here, $B_i$ are balls in $\overline{Z_+}$, centered at points in $X\times\{0\}$; 
this is the ``co-dimension $\tau$ Hausdorff measure on $X$. Then by~\eqref{eq:co-dim}, we have
\begin{align*}
\mathcal{H}^{*,1+a}(A)&=\frac{1}{1+a} \lim_{\eps\to 0^+}\inf\bigg\lbrace
 \sum_i\mu_X(B_i^X)\, :\, A\subset \bigcup_i B_i^X\text{ and }\rad(B_i^X)<\eps
 \bigg\rbrace\\ &=\frac{1}{1+a}\mu_X(A)
\end{align*}
because $\mu_X$ is a Borel regular measure on $X$. Thus $\mu_X$ is a codimension $1+a$ Hausdorff measure
on $X=\partial Z_+$ satisfying~\eqref{eq:co-dim}.
\end{proof}

Recall the definition of $D^{1,2}(Z_+)$ from Definition~\ref{deff:SobolevN-D}, and that with $-1<a<1$ fixed,
we set $\theta=\tfrac{1-a}{2}$.

\begin{prop}\label{prop-Trace}
Trace space of $D^{1,2}(Z_+)$ is the class $B^\theta_{2,2}(X)$ where $\theta=\tfrac{1-a}{2}$. That is, the 
operator $T$ as defined in~\eqref{eq:Trace} forms a bounded linear operator
\[
 T:D^{1,2}(Z_+)\to B^\theta_{2,2}(X).
 \]
 In addition, there is a bounded linear operator
 \[
  E:B^\theta_{2,2}(X)\to D^{1,2}(Z_+)
 \]
 such that $T\circ E$ is the identity map on $B^\theta_{2,2}(X)$. Furthermore,
 for functions $u$ in $D^{1,2}(Z_+)$ that have continuous extensions, also denoted by $u$, 
 to $X\times\{0\}$, we have $Tu=u\vert_{X\times\{0\}}$.
\end{prop}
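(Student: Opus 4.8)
The plan is to view $Z_+\subset Z$ as an instance of the geometric configuration in Maly's trace theorem~\cite{Maly} — a uniform domain in a complete doubling space supporting a $2$-Poincar\'e inequality, whose boundary carries a codimensional Hausdorff measure — and then to run a \emph{homogeneous} version of that theory, in which the boundedness of the domain that~\cite{Maly} uses (and which fails for $Z_+$, even when $X$ is bounded) is replaced by the scale invariance of the Dirichlet and Besov energies. By Remark~\ref{remk:Cartesian-PI} the triple $(Z,d_Z,\mu_a)$ is complete, doubling and supports a $2$-Poincar\'e inequality, and $Z_+$, equipped with $\mu_a$, inherits these properties because it is a uniform domain in $Z$ (see Proposition~\ref{prop:unif+PI} and~\cite{BjSh}); in particular $Z_+$ supports a $(2,2)$-Poincar\'e inequality. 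Proposition~\ref{prop:unif+PI} also gives, for $x\in X$ and $r>0$, the identities $B_Z((x,0),r)\cap Z_+=B_X(x,r)\times(0,r)$ and $\mu_a(B_Z((x,0),r)\cap Z_+)=\tfrac{1}{1+a}r^{1+a}\mu_X(B_X(x,r))$, so that $\mu_X$ is the codimension $1+a$ Hausdorff measure on $X=\partial Z_+$ (recall $1+a=2-2\theta$ and $1-a=2\theta$). Throughout I abbreviate $B_{x,r}:=B_Z((x,0),r)\cap Z_+$.

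I would first prove that $T$ from~\eqref{eq:Trace} is well defined $\mu_X$-a.e.\ on $X$ and bounded into $B^\theta_{2,2}(X)$. Fix $u\in D^{1,2}(Z_+)$ with upper gradient $g\in L^2(Z_+,\mu_a)$ and write $u_r(x):=\vint_{B_{x,r}}u\,d\mu_a$. Applying the $(2,2)$-Poincar\'e inequality on $B_{x,r}$ and using the codimension identity to turn the $\mu_a$-average into a $\mu_X$-average yields $|u_r(x)-u_{r/2}(x)|^2\lesssim r^{2\theta}\,\mu_X(B_X(x,\lambda r))^{-1}\int_{B_{x,\lambda r}}g^2\,d\mu_a$; telescoping over dyadic radii, the geometric series in $r^{2\theta}$ converges, so the limit $Tu(x):=\lim_{r\to0^+}u_r(x)$ exists for $\mu_X$-a.e.\ $x$, with $|u_r(x)-Tu(x)|^2\lesssim r^{2\theta}\Sigma_r(x)$ for a function $\Sigma_r$ bounded in $L^1(X)$ by $C\|g\|_{L^2(Z_+)}^2$. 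For the Besov seminorm, let $x,w$ be points where the limit exists and set $d:=d_X(x,w)$. Uniformity of $Z_+$ lets me join $(x,0)$ to $(w,0)$ through $O(1)$ balls of radius $\simeq d$ contained in $Z_+$; chaining $Tu(x)\to u_d(x)$, through these balls, and $u_{Cd}(w)\to Tu(w)$, applying the $(2,2)$-Poincar\'e inequality on each ball and the codimension identity, I would obtain $|Tu(x)-Tu(w)|^2\lesssim d^{2\theta}\,\mu_X(B_X(x,Cd))^{-1}\int_{B_{x,Cd}}g^2\,d\mu_a$. Dividing by $d^{2\theta}\mu_X(B_X(x,d))$ — where the powers of $d$ cancel exactly because $2-(1+a)-2\theta=0$ (the $2$ from the Poincar\'e scaling, the $1+a$ from the codimension, the $2\theta$ from the Besov exponent) — and integrating in $w$ and then in $x$ against $d\mu_X$ as in~\eqref{eq:BesovEnergy}, a Fubini interchange together with the doubling and reverse-doubling properties of $\mu_X$ (used to sum, for a fixed $(z,y)\in Z_+$, the contributions of all scales $d\gtrsim\max\{d_X(x,z),y\}$) gives $\|Tu\|_{B^\theta_{2,2}(X)}^2\lesssim\int_{Z_+}g^2\,d\mu_a$. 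Taking the infimum over admissible $g$ shows that the linear operator $T\colon D^{1,2}(Z_+)\to B^\theta_{2,2}(X)$ is bounded.

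For the extension, I would take $E:=\Pi_a$ from~\eqref{eq:explicit}, which is linear in $f$. By Proposition~\ref{prop:trace-energy} and Proposition~\ref{prop:besovenergy}, $\|Ef\|_{D^{1,2}(Z_+)}^2=\tfrac{2^{2\theta-1}\Gamma(\theta)}{\Gamma(1-\theta)}\,\mathcal{E}_\theta(f,f)\simeq\|f\|_{B^\theta_{2,2}(X)}^2$, so $E\colon B^\theta_{2,2}(X)\to D^{1,2}(Z_+)$ is bounded. To see $T(Ef)=f$ $\mu_X$-a.e., note that since $B_{x,r}=B_X(x,r)\times(0,r)$ the defining average factors as $\vint_{B_{x,r}}Ef\,d\mu_a=\int_0^r\big(\vint_{B_X(x,r)}u(\cdot,y)\,d\mu_X\big)\sigma_r(y)\,dy$, where $u=\Pi_af$ and $\sigma_r(y):=(1+a)y^ar^{-(1+a)}$ is a probability density on $(0,r)$. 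Writing $u(\cdot,y)=f+h_y$, Lemma~\ref{lem:propertiesextension} gives $\|h_y\|_{L^2(X)}\to0$ as $y\to0^+$; hence, at a Lebesgue point $x$ of $f$, the main term $\vint_{B_X(x,r)}f\,d\mu_X\to f(x)$, while by Cauchy--Schwarz the remaining term is at most $\big(\vint_{B_X(x,r)}H_r\,d\mu_X\big)^{1/2}$ with $H_r(z):=\int_0^r|h_y(z)|^2\sigma_r(y)\,dy$ satisfying $\|H_r\|_{L^1(X)}\le\sup_{0<y<r}\|h_y\|_{L^2(X)}^2\to0$. Since $\vint_{B_X(x,r)}H_r\,d\mu_X\le MH_r(x)$ for the Hardy--Littlewood maximal operator $M$ on the doubling space $X$ and $M$ is of weak type $(1,1)$, there is a sequence $r_k\downarrow0$ with $MH_{r_k}(x)\to0$ for $\mu_X$-a.e.\ $x$, so $\vint_{B_{x,r_k}}Ef\,d\mu_a\to f(x)$ for $\mu_X$-a.e.\ $x$; since the full limit $T(Ef)(x)$ exists for $\mu_X$-a.e.\ $x$ by the first part, $T(Ef)=f$ $\mu_X$-a.e.

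Finally, if $u\in D^{1,2}(Z_+)$ has a continuous extension to $X\times\{0\}$, then for every $x\in X$ the sets $B_{x,r}=B_X(x,r)\times(0,r)$ shrink to $(x,0)$ as $r\to0^+$, so by continuity $\vint_{B_{x,r}}u\,d\mu_a\to u(x,0)$; comparing with~\eqref{eq:Trace} gives $Tu(x)=u(x,0)$ for $\mu_X$-a.e.\ $x$, i.e.\ $Tu=u|_{X\times\{0\}}$. The hard part will be the boundedness of $T$: carrying out the telescoping-and-chaining estimate in the \emph{homogeneous}, unbounded setting forces one to (i) know that $Z_+$ itself supports a $2$-Poincar\'e inequality, which comes from uniformity via Proposition~\ref{prop:unif+PI} and~\cite{BjSh}; (ii) arrange the connecting chains so that every intermediate ball stays inside $Z_+$ at a height comparable to $d_X(x,w)$, which is precisely what uniformity provides; and (iii) sum the scale-indexed contributions of each point of $Z_+$ without appealing to $\diam Z_+<\infty$, which is where the reverse-doubling behavior of $\mu_X$ enters. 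The remaining verifications are routine given the machinery assembled above.
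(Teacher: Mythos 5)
Your overall framework is right (a homogeneous version of Mal\'y's trace theorem for the uniform domain $Z_+$, a chaining estimate plus a Tonelli interchange for the boundedness of $T$), but there is a genuine gap in the central estimate. The claimed bound
$|Tu(x)-Tu(w)|^2\lesssim d^{2\theta}\,\mu_X(B_X(x,Cd))^{-1}\int_{B_{x,Cd}}g^2\,d\mu_a$
is not correct: the telescoping sum from $Tu(x)$ to $u_d(x)$ runs over all scales $r_k=2^{-k}d$, and the weight $\mu_X(B_X(x,\lambda r_k))^{-1}$ grows as $k\to\infty$, so the sum cannot be collapsed into a single ball average (a radial cap profile near the tip $(x,0)$ already gives a counterexample). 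Even if you retain the full chain sum, the version without an extra decaying weight does not survive the Tonelli step: after dividing by $d^{2\theta}\mu_X(B(x,d))$, integrating over the annulus $\{d_X(x,w)\sim d\}$, and then over $x$, the contribution of a fixed $z=(z_x,z_y)\in Z_+$ to the Besov seminorm becomes $\sum_{d\gtrsim z_y}1$, which diverges; reverse doubling cannot rescue this because the $x$-integral $\int\mu_X(B(x,Cd))^{-1}\chi_{B_{x,Cd}}(z)\,d\mu_X(x)$ is of order $1$ at every scale, not summable. The paper's fix is the $\eps$-splitting $r_k=r_k^{(1-a-\eps)/2}r_k^{(1+a+\eps)/2}$ before applying Cauchy--Schwarz, which leaves a factor $d^{-\eps}\,d_Z((x,0),z)^\eps$ inside the final integrand and, since $z\in C_1[x,w]$ forces $d_Z((x,0),z)\lesssim\delta(z)\lesssim d_X(x,w)$, the sum $\sum_{d\gtrsim\delta(z)}d^{-\eps}\sim\delta(z)^{-\eps}$ cancels exactly; this is the mechanism your sketch is missing.

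Secondly, choosing $E=\Pi_a$ is a genuinely different (and elegant) route for the extension half, and your verification that $T(\Pi_a f)=f$ via the factorization of the $\mu_a$-average, the Cauchy--Schwarz reduction to $H_r$, and the weak $(1,1)$ maximal argument is correct. However, $\Pi_a$ requires $f\in L^2(X)$ for the kernel integral to converge and for Propositions~\ref{prop:besovenergy} and~\ref{prop:trace-energy} to apply, whereas the proposition asserts boundedness of $E$ on all of $B^\theta_{2,2}(X)$, which contains functions not in $L^2(X)$ (for example nonzero constants when $X$ is unbounded; this case is actually used later in the proof of Theorem~\ref{thm:cheegerminimizer}). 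The paper instead constructs a Whitney-type partition-of-unity extension $Eu=\sum_{i,n}u_{B_{i,n}}\varphi_{i,n}$, which only requires $u\in L^1_{\rm loc}(X)$ with finite Besov seminorm, and then estimates $\mathrm{Lip}\,Eu$ layer by layer. If you want to keep $\Pi_a$ you would need to explain how to extend it beyond $B^\theta_{2,2}\cap L^2$, or restrict the claimed mapping property accordingly; otherwise the Whitney construction is the route that matches the stated generality.
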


The proof of this proposition follows along the lines of the proof of~\cite[Theorem~1.1]{Maly}; however, 
our uniform domain $Z_+$ is not bounded and the functions $u$ in the above proposition are not 
necessarily in the global Sobolev class $N^{1,2}(Z_+)$ as non-zero constant functions have globally finite
energy but are not in $N^{1,2}(Z_+)$. Therefore the proof in~\cite{Maly} is not directly applicable here.
Instead, for the convenience of the reader, we provide a complete proof below.

\begin{proof}
For $x\in X$ we set $\gamma_x:[0,\infty)\to\overline{Z_+}$ by $\gamma_x(t)=(x,t)$.
Let $u\in D^{1,2}(Z_+)$ with a $2$-weak upper gradient $g\in L^2(Z_+)$.
As $Z_+$ is a uniform domain, by~\cite{BjSh} we know that $u$ has an extension, 
also denoted $u$, that belongs to $D^{1,2}(\overline{Z_+})$, and so $\text{Cap}_2$-almost every
point in $\overline{Z_+}$ is a $\mu_a$-Lebesgue point of $u$.
Let $x,w\in X$ be two distinct points such that $u\circ\gamma_x$ and $u\circ\gamma_w$ are absolutely
continuous and have limits along $\gamma_x$, $\gamma_w$ at $x$ and $w$ respectively, see 
Lemma~\ref{lem:cylinder-modulus}.  By Lemma~\ref{lem:cylinder-modulus}
and by the connection between $\text{Cap}_2(E)=0$ and $2$-modulus of the family of all curves in $\overline{Z_+}$
intersecting $E$ being zero (see~\cite{HKSTbook} or~\cite{N}), we know that
if $E\subset \partial Z_+=X$ has
$\text{Cap}_2(E)=0$, then $\mu_X(E)=0$. 
Thus the trace defined by~\eqref{eq:Trace} exists $\mu_X$-a.e.
Hence, we may assume also that $x,w$ are both Lebesgue
points of $u$ (with respect to the measure $\mu_a$). 

From the proof of Proposition~\ref{prop:unif+PI}, the concatenation of the three curves $\gamma_1$, $\gamma_2$,
and $\gamma_3$ is a uniform curve in $Z_+$ with end points $(x,0)$ and $(w,0)$, where
\begin{align*}
\gamma_1(t)&:=(x,t), \ \ \ 0\le t\le d_X(x,w),\\
\gamma_3(t)&:=(w,d_X(x,w)-t),\ \ \ 0\le t\le d_X(x,w),\\
\gamma_2(t)&:=(\beta(t),d_X(x,w)), \ \ \ 0\le t\le \ell_X(\beta), 
\end{align*}
where $\beta$ is a $C_q$-quasiconvex curve in $X$ arc-length parametrized to be beginning at $x$ and ending at $w$.
We cover $\gamma$ by Whitney-type balls as follows. Let 
$N_0$ be the smallest positive integer such that $N_0 d_Z(x,w)/2\ge \ell_X(\beta)$. Because $\beta$ is $C_q$-quasiconvex,
it follows that $N_0\le 2C_q$.

To construct the cover, we provide three groups of balls, one for each of $\gamma_1$, $\gamma_2$, and
$\gamma_3$. The first group, associated with $\gamma_1$, consists of balls $B_k$ for negative integers $k$.
The middle group, $B_k$ for $k=0,\cdots,N_0$, is associated with $\gamma_2$, while the balls
$B_k$ for positive integers $k>N_0$ are associated with $\gamma_3$. This is done as follows.

For $k=0,\cdots, N_0-1$ let
$B_k:=B(\gamma_2(kd_X(x,w)),d_X(x,w)/2)$ the ball centered a length-distance $kd_X(x,w)/2$ along $\gamma_2$
from $(x,d_X(z,w))$, with radius $d_X(w,x)/2$, and let $B_{N_0}:=B(\gamma_2(\ell(\beta)),d_X(w,x)/2)$.
For positive integers $k>N_0$ we set 
$B_k:=B(\gamma_3(t_k),2^{N_0-k}d_X(x,w))$ for $t_k:=(1-2^{N_0-k})d_X(x,w)$. For negative integers $k$
we set $B_k:=B(\gamma_1(\tau_k),2^k d_X(x,w))$ with $\tau_k:=2^kd_X(x,w)$. 
For $k\in\Z$ we set $r_k=2^{-|k|}d_X(x,w)$. Note that the radius of $B_k$ is comparable to $r_k$. 

Strictly speaking, the balls $B_k$ with $k\le 0$ or $k\ge N_0$ are not balls centered at $x,w$ respectively;
however, balls centered at $x,w$ respectively and twice the radius of $B_k$ will contain $B_k$, and
so as long as $\lim_{\rho\to 0^+} \rho \left(\vint_{B(x,\rho)} g^2\, d\mu_a\right)^{1/2}=0$, by the Poincar\'e 
inequality we still have that $\lim_{k\to-\infty} u_{B_k}=u((x,0))$ and $\lim_{k\to\infty} u_{B_k}=u((w,0))$.
Hence,  by a telescoping argument (see for example~\cite{HKSTbook, HeiK}), we have (we denote $u(x,0)$ by an
abuse of notation as $u(x)$) and for a choice of $\eps>0$, 
\begin{align*}
|u(x)-u(w)|\le \sum_{k\in\Z} |u_{B_k}-u_{B_{k+1}}|&\le C \sum_{k\in\Z}\vint_{2B_k}|u-u_{2B_k}|\, d\mu_a\\
  &\le C \sum_{k\in\Z} r_k \left(\vint_{2B_k}g^2\, d\mu_a\right)^{1/2}\\
  &\le C\sum_{k\in\Z} r_k^{\tfrac{1-a-\eps}{2}} \left(r_k^{1+a+\eps}\vint_{2B_k}g^2\, d\mu_a\right)^{1/2},
\end{align*}
where we use the doubling property of $\mu_a$ together with the fact that $B_{k+1}\subset 2B_k$ for $k\in\Z$,
followed by the $2$-Poincar\'e inequality on $Z_+$.
Now by H\"older's inequality, we obtain
\begin{align*}
|u(x)-u(w)|&\le C\left(\sum_{k\in\Z}r_k^{1-(a+\eps)}\right)^{1/2}
  \left(\sum_{k\in\Z}r_k^{1+a+\eps}\vint_{2B_k}g^2\, d\mu_a\right)^{1/2}\\
 &\le C d_X(x,w)^{\tfrac{1-a-\eps}{2}} 
\left(\sum_{k\in\Z}\frac{r_k^{1+a+\eps}}{\mu_a(B_k)}\int_{2B_k}g^2\, d\mu_a\right)^{1/2}\\
&\le C d_X(x,w)^{\tfrac{1-a-\eps}{2}} \left(\sum_{k\in\Z} \frac{r_k^\eps}{\mu_X(B(z_k,r_k))} \int_{2B_k} g^2\, d\mu_a\right)^{1/2},
\end{align*}
where $z_k=x$ when $k\le 0$ and $z_k=w$ when $k>0$.
Following~\cite{Maly}, we set
\[
C_1[x,w]:=\bigcup_{k=0}^\infty 2B_k,\ \ \ C_2[x,w]:=\bigcup_{k=1}^\infty 2B_{-k}.
\]
Then we have
\begin{align*}
|u(x)-u(w)|\le C d_X(x,w)^{\tfrac{1-a-\eps}{2}}&
 \bigg(\int_{C_1[x,w]}\frac{d_Z((x,0),z)^\eps}{\mu_X(B(x,d_Z((x,0),z)))}g(z)^2\, d\mu_a(z)\\
 +\int_{C_2[x,w]}&\frac{d_Z((w,0),z)^\eps}{\mu_X(B(w,d_Z((w,0),z)))}g(z)^2\, d\mu_a(z)\bigg)^{1/2}.
\end{align*}
Recalling that $\theta=\tfrac{1-a}{2}$, we obtain
\begin{align}\label{eq:part-Besov-term}
\frac{|u(x)-u(w)|^2}{d_X(x,w)^{2\theta}}\le C d_Z(x,w)^{-\eps}  &
 \bigg(\int_{C_1[x,w]}\frac{d_Z((x,0),z)^\eps}{\mu_X(B(x,d_Z((x,0),z)))}g(z)^2\, d\mu_a(z)\notag \\
 +\int_{C_2[x,w]}&\frac{d_Z((w,0),z)^\eps}{\mu_X(B(w,d_Z((w,0),z)))}g(z)^2\, d\mu_a(z)\bigg).
\end{align}
Now,  we estimate the Besov energy seminorm of $u$ on $\partial Z_+=X\times\{0\}\simeq X$.

By the results in~\cite[Theorem~1.11]{JJRRS},  we know that $u$ is Borel measureable in $\overline{Z_+}$,
and hence its restriction to $\partial Z_+=X$ is measurable with respect to the Borel measure $\mu_X$.
Recall that
\[
\Vert u\Vert_{B^\theta_{2,2}(X)}^2
 =\int_X\int_X\frac{|u(x)-u(w)|^2}{d_X(x,w)^{2\theta}\mu_X(B(x,d_X(x,w)))}\, d\mu_X(w)\, d\mu_X(x).
\]
By~\eqref{eq:part-Besov-term}, we obtain
\begin{align}
\Vert u\Vert_{B^\theta_{2,2}(X)}^2
&\lesssim \int_X\! \int_X\!\!\bigg(\int_{C_1[x,w]}\frac{d_Z((x,0),z)^\eps\, d_X(x,w)^{-\eps}}{\mu_X(B(x,d_Z((x,0),z)))\mu_X(B(x,d_X(x,w)))}g(z)^2 d\mu_a(z)\notag \\
 +\int_{C_2[x,w]}&\frac{d_Z((w,0),z)^\eps\, d_X(x,w)^{-\eps}}{\mu_X(B(w,d_Z((w,0),z)))\mu_X(B(x,d_X(x,w)))}g(z)^2\, d\mu_a(z)\bigg)d\mu_X(w)\, d\mu_X(x)\notag \\
 & \qquad =:C(I_1+I_2), \label{eq:Besov-energy}
\end{align}
where, by Tonelli's theorem,
\begin{align}
I_1&:=\!\!\int_{X^2}\!\int_{C_1[x,w]}\frac{g(z)^2\,d_Z((x,0),z)^\eps\, d_X(x,w)^{-\eps}}{\mu_X(B(x,d_Z((x,0),z)))\mu_X(B(x,d_X(x,w)))}\, d\mu_Z(z)\, d\mu_X(w)\, d\mu_X(x) \notag\\
 =&\int_{Z_+}g(z)^2\int_{X^2}\frac{\chi_{C_1[x,w]}(z)\,d_Z((x,0),z)^\eps\, d_X(x,w)^{-\eps}}{\mu_X(B(x,d_Z((x,0),z)))\mu_X(B(x,d_X(x,w)))}\, d\mu_X(w)\, d\mu_X(x)\, d\mu_a(z), \label{eq:term1}\\
I_2&:=\!\!\int_{X^2}\!\int_{C_2[x,w]}\!\frac{g(z)^2\,d_Z((w,0),z)^\eps\, d_X(x,w)^{-\eps}}{\mu_X(B(w,d_Z((w,0),z)))\mu_X(B(x,d_X(x,w)))}\, d\mu_Z(z)\, d\mu_X(w)\, d\mu_X(x)\notag\\
 =&\int_{Z_+}g(z)^2\!\int_{X^2}\frac{\chi_{C_2[x,w]}(z)\,d_Z((w,0),z)^\eps\, d_X(x,w)^{-\eps}}{\mu_X(B(w,d_Z((w,0),z)))\mu_X(B(x,d_X(x,w)))}\, d\mu_X(w)\, d\mu_X(x)\, d\mu_a(z). \label{eq:term2}
\end{align}
Here, we abbreviated $\int_X \int_X$ by $\int_{X^2}$.

Observe that if $z\in C_1[x,w]$, then $\tfrac{d_Z((x,0),z)}{4}\le \text{dist}_Z(z,\partial Z_+)\le 4d_X(x,w)$, and
if $z\in C_2[x,w]$, then $\tfrac{d_Z((w,0),z)}{4}\le \text{dist}_Z(z,\partial Z_+)\le 4d_X(x,w)$. 
To simplify notation, we set $\delta(z):=\distz(z,\partial Z_+)$ for $z\in Z_+$. 
Then, 
\[
\chi_{C_1[x,w]}(z)\le \chi_{B(z, 4\delta(z))}((x,0))\chi_{X\setminus B(x,\delta(z)/4)}(w).
\]
Thus, for each $z\in Z_+$, setting $B_k:=B(x,2^k\delta(z)/4)\subset X$ for $k=0,1,\cdots$, we have
\begin{align*}
\int_X&\int_X\frac{\chi_{C_1[x,w]}(z)\,d_Z((x,0),z)^\eps\, d_X(x,w)^{-\eps}}{\mu_X(B(x,d_Z((x,0),z)))\mu_X(B(x,d_X(x,w)))}\, d\mu_X(w)\, d\mu_X(x)\\
\lesssim &\int_{B(z,4\delta(z))\cap \partial Z_+}\int_{X\setminus B(x,\delta(z)/4)}
  \frac{\delta(z)^\eps d_X(x,w)^{-\eps}}{\mu_X(B(x,\delta(z)))\, \mu_X(B(x, d_X(x,w)))}\, d\mu_X(w)\, d\mu_X(x)\\
\lesssim &\int_{B(z,4\delta(z))\cap\partial Z_+}\frac{\delta(z)^\eps}{\mu_X(B(x,\delta(z)))}
 \sum_{k=0}^\infty \frac{(2^k\delta(z))^{-\eps}}{\mu_X(B(x,2^k\delta(z)/4))}\mu_X(B_{k+1}\setminus B_k)\, d\mu_X(x)\\
 \lesssim &\int_{B(z,4\delta(z))\cap\partial Z_+}\frac{\delta(z)^\eps}{\mu_X(B(x,\delta(z)))}
 \sum_{k=0}^\infty 2^{-k\eps}\delta(z)^{-\eps}\ d\mu_X(x)\ \lesssim 1.
\end{align*}
A similar argument shows that 
\[
\int_X\int_X\frac{\chi_{C_2[x,w]}(z)\,d_Z((w,0),z)^\eps\, d_X(x,w)^{-\eps}}{\mu_X(B(w,d_Z((w,0),z)))\mu_X(B(x,d_X(x,w)))}\, 
  d\mu_X(w)\, d\mu_X(x)\lesssim 1.
\]
Therefore, from~\eqref{eq:term1} and~\eqref{eq:term2} we obtain
$I_1+I_2\le C\int_{Z_+}g(z)^2\, d\mu_Z(z)$,
from whence we obtain by~\eqref{eq:Besov-energy} that
\begin{equation}\label{eq:Bes-Dir-bound}
\Vert u\Vert_{B^\theta_{2,2}(X)}^2\le C\int_{Z_+} g^2\, d\mu_Z
\end{equation}
as desired.

We next show that if $u:X\to\R$ such that $\Vert u\Vert_{B^\theta_{2,2}(X)}$ is finite, then $u$ has an extension to $Z_+$,
also denoted $u$, such that $u$ has a $2$-weak upper gradient $g$ in $Z_+$ with the property that $g\in L^2(Z_+)$.
To this end, for each $n\in\Z$ we choose $A_n\subset X$
to be a maximal $2^{-n}$-separated set. We can ensure also that $A_n\subset A_{n+1}$ for each $n\ge 0$.  
As $\mu_X$-almost
every point in $X$ is a Lebesgue point of $u$, we can also ensure that each point in $A_n$ is a Lebesgue point of $u$. 
Then the balls 
$Q_{i,n}:=B_X(x_i,2^{-n})\times(3\cdot 2^{-n},7\cdot 2^{-n})\subset Z_+$, $x_i\in A_n$, forms a Whitney cover of $Z_+$ such that
\begin{enumerate}
\item we have $Z_+=\bigcup_{i,n}Q_{i,n}$,
\item for each $\tau\ge 1$ there is a constant $C_\tau\ge 1$ such that for each $n\in\N$ we have
$\sum_i\chi_{\tau Q_{i,n}}\le C_\tau$, that is, we have the bounded overlap property,
\item for each $i_0,n_0$ there are at most $M$ number of balls $Q_{i,n}$ such that $Q_{i,n}\cap Q_{i_0,n_0}\ne \emptyset$,
and moreover, if $Q_{i,n}$ intersects $Q_{i_0,n_0}$, then $|n-n_0|\le 1$.
\item The center $z_{i,n}:=(x_i, 5\cdot 2^{-n})\in Z_+$ of $Q_{i,n}$ satisfies $\delta(z_{i,n})=5\cdot 2^{-n}$ and
$Q_{i,n}$ is at a distance $3\cdot 2^{-n}$ from $\partial Z_+$.
\end{enumerate}
Here, for $\tau>0$, by $\tau Q_{i,n}$ we mean the scaled ball $B_X(x_i,\tau 2^{-n})\times((5-2\tau) 2^{-n},(5+2\tau) 2^{-n})$.
It is sometimes in our interest to make sure that $0<\tau<5/2$ so that $\overline{\tau Q_{i,n}}\subset Z_+$.
However, when $\tau>5/2$ we have that $\tau Q_{i,n}\supset B_X(x_i,2^{-n})\times\{0\}$.
Let $\pip_{i,n}$ be a Lipschtiz partition of unity subordinate to the cover $Q_{i,n}$, that is,
\begin{enumerate}
\item we have $0\le \pip_{i,n}\le 1$ on $Z$,
\item the support of $\pip_{i,n}$ is contained in $Q_{i,n}$,
\item the function $\pip_{i,n}$ is $2^n\, C$-Lipschitz continuous,
\item the sum $\sum_{i,n}\pip_{i,n}=\chi_{Z_+}$.
\end{enumerate}
We set $B_{i,n}:=B_X(x_i,2^{1-n})$.
Now, for $u\in B^\theta_{2,2}(X)$ we set $Eu$ to be the function on $Z_+$ given by
\[
Eu(z):=\sum_{i,n} u_{B_{i,n}}\, \pip_{i,n}(z).
\]
If $z,w\in Q_{i_0,n_0}$, then
\begin{align*}
|Eu(z)&-Eu(w)|=\bigg\vert \sum_{i,n}[u_{B_{i,n}}-u_{B_{i_0,n_0}}][\pip_{i,n}(z)-\pip_{i,n}(w)]\bigg\vert\\
&\le C\sum_{\substack{i,n:\\Q_{i_0,n_0}\cap Q_{i,n}\ne \emptyset}}2^n d_Z(z,w)\vint_{B_{i,n}}\vint_{B_{i_0,n_0}}
|u(x_1)-u(x_2)|\, d\mu_X(x_1)\, d\mu_X(x_2)\\
&\le C\, 2^{n_0}d_Z(z,w)\, \vint_{2B_{i_0,n_0}}\vint_{2B_{i_0,n_0}}|u(x_1)-u(x_2)|\, d\mu_X(x_1)\, d\mu_X(x_2). 
\end{align*}
In the above, we used the bounded overlap property of $2B_{i_0,n_0}$.
Using the fact that if $z\in Q_{i_0,n_0}$, then $\delta(z)\simeq 2^{-n_0}$, we obtain
\begin{align*}
|Eu(z)-Eu(w)|
&\lesssim \frac{d_Z(z,w)}{\delta(z)} \vint_{2B_{i_0,n_0}}\vint_{2B_{i_0,n_0}}|u(x_1)-u(x_2)|\, d\mu_X(x_1)\, d\mu_X(x_2).
\end{align*}
Therefore, when $z\in Q_{i_0,n_0}$, we have
\[
\text{Lip}Eu(z)\lesssim \delta(z)^{-1} \vint_{2B_{i_0,n_0}}\vint_{2B_{i_0,n_0}}|u(x_1)-u(x_2)|\, d\mu_X(x_1)\, d\mu_X(x_2).
\]
For integers $n\in\Z$, we set $A_n:=\{z\in Z_+\, :\, 2^{-(n+1)}\le \delta(z)\le 2^{1-n}\}$. Then
we have $\delta(z)\simeq 2^{-n}$ when $z\in A_n$, and so
\begin{align*}
&\int_{A_n}\text{Lip}Eu(z)^2\, d\mu_a(z)
 = \int_X\int_{2^{-(n+1)}}^{2^{1-n}}\delta((x,y))^a \text{Lip}Eu((x,y))^2\, dy\, d\mu_X(x)\\
\simeq &\sum_i\int_{B_{i,n}}\int_{2^{-(n+1)}}^{2^{1-n}}\delta((x,y))^a \text{Lip}Eu((x,y))^2\, dy\, d\mu_X(x)\\
\lesssim & \sum_i\int_{B_{i,n}}\int_{2^{-(n+1)}}^{2^{1-n}}
 \vint_{2B_{i,n}}\vint_{2B_{i,n}}\frac{|u(x_1)-u(x_2)|^2}{2^{n(a-2)}}\, d\mu_X(x_1)\, d\mu_X(x_2)\, dy\, d\mu_X(x)\\
\lesssim &\sum_i\int_{B_{i,n}}\int_{2^{-(n+1)}}^{2^{1-n}}2^{n}
 \vint_{2B_{i,n}}\vint_{2B_{i,n}}\frac{|u(x_1)-u(x_2)|^2}{2^{-n(1-a)}}\, d\mu_X(x_1)\, d\mu_X(x_2)\, dy\, d\mu_X(x)\\
\simeq &\sum_i\int_{B_{i,n}} \vint_{2B_{i,n}}\vint_{2B_{i,n}}\frac{|u(x_1)-u(x_2)|^2}{2^{-n(1-a)}}\, d\mu_X(x_1)\, d\mu_X(x_2)\, d\mu_X(x)\\
\simeq&\sum_i\int_{2B_{i,n}}\vint_{2B_{i,n}}\frac{|u(x_1)-u(x_2)|^2}{2^{-n(1-a)}}\, d\mu_X(x_1)\, d\mu_X(x_2).
\end{align*}
The bounded overlap property of the balls $2B_{i,n}$ for a fixed $n\in\Z$ gives us
\[
\int_{A_n}\text{Lip}Eu(z)^2\, d\mu_a(z)
\lesssim \int_X\vint_{B(x_2,2^{1-n})}\frac{|u(x_1)-u(x_2)|^2}{2^{-n(1-a)}}\, d\mu_X(x_1)\, d\mu_X(x_2).
\]
It follows that
\begin{align*}
\int_{Z_+}\text{Lip}Eu(z)^2\, d\mu_a(z)
&\lesssim \sum_{n\in\Z}\int_X\vint_{B(x_2,2^{1-n})}\frac{|u(x_1)-u(x_2)|^2}{2^{-n(1-a)}}\, d\mu_X(x_1)\, d\mu_X(x_2)\\
&\simeq \Vert u\Vert_{B^\theta_{2,2}(X)}^2,
\end{align*}
where the last comparison is from~\cite{GKS}. 
From the above we also get the reverse  estimate of~\eqref{eq:Bes-Dir-bound}:
\begin{equation}\label{eq:Dir-Bes-bound}
\int_{Z_+}g_{Eu}^2\, d\mu_a\le C \Vert u\Vert_{B^\theta_{2,2}(X)}^2.
\end{equation}
Recall that $\theta=\tfrac{1-a}{2}$.
Finally, it remains to show that $TEu=u$ $\mu_X$-a.e.~in $X$. As from the discussion at the beginning of the proof, 
we know that for $\mu_X$-a.e.~$x\in X$
we have that $\lim_{y\to 0^+} Eu(x,y)=v(x)$ exists and is the trace $TEu$ of $Eu$ given by the condition that
\[
\lim_{r\to 0^+}\vint_{B((x,0),r)\cap Z_+}|Eu-v(x)|\, d\mu_a=0.
\]
Moreover, by the fact that $u\in L^1_{loc}(X)$, we can also ensure that $\mu_X$-a.e.~$x\in X$ is a $\mu_X$-Lebesgue 
point of $u$. Let $x\in X$ be both a $\mu_X$-Lebesgue point as well as satisfying the above condition on $v(x)$. The goal here is 
to show that $v(x)=u(x)$. To this end, let $y>0$; then
\[
Eu((x,y))-u(x)=\sum_{i,n}[u_{B_{i,n}}-u(x)]\, \phi_{i,n}(y).
\]
Observe that $\phi_{i,n}(y)\ne 0$ only when the radius of $B_{i,n}$ is comparable to $y$ and its center is at a comparable distance
from $(x,y)$ as well. Therefore
\begin{align*}
|Eu((x,y))-u(x)|&\le C\, \sum_{i,n}\left(\vint_{B((x,0),Cy)} |u-u(x)|\, d\mu_X\right)\, \phi_{i,n}(y)\\
  &= C\, \vint_{B((x,0),Cy)} |u-u(x)|\, d\mu_X\to 0\text{ as }y\to 0^+.
\end{align*}
It follows that $v(x)=u(x)$, completing the proof.
\end{proof}

The following is a type of gluing lemma that allows us to combine a Newton-Sobolev function on $Z_+$
with a Newton-Sobolev function on $Z_-:=X\times(-\infty,0)$ to obtain a Newton-Sobolev function on $Z=X\times\R$.

\begin{lemma}\label{lem:pasting}
Let $T_+$ be the trace operator on $D^{1,2}(Z_+)$ and $T_-$ be the trace operator on $D^{1,2}(Z_-)$. 
Suppose $u\in D^{1,2}(Z_+)$ and $v\in D^{1,2}(Z_-)$ such that  $T_+u(x)=T_-v(x)$ for $\mu_X$-a.e.~$x\in X$. Define $w:Z\to\R$ by 
\[
w(x,y)=\begin{cases} u(x,y)&\text{ if }y>0,\\
   T_+u(x) &\text{ if }y=0,\\
   v(x,y)&\text{ if }y<0. \end{cases}
\] 
Then $w$ belongs to $D^{1,2}(Z)$. 
\end{lemma}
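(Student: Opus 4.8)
The plan is to realize $w$ as a function on the product $Z=X\times\R$ (with the second factor $Y=\R$ carrying the $A_2$-weighted measure $|y|^a\,dy$) and to verify, for this $w$, the slicing hypotheses of Lemma~\ref{lem:tensor-N1p} in its homogeneous form (with $D^{1,2}$ in place of $N^{1,2}$, since $w$ need not be globally $L^2$). By Remark~\ref{remk:Cartesian-PI} both factors are doubling and support a $2$-Poincar\'e inequality, so the Poincar\'e-inequality argument inside the proof of Lemma~\ref{lem:tensor-N1p} — which for the implication we need uses only $w\in L^1_{\mathrm{loc}}(Z)$ and not $w\in L^2(Z)$ — will produce a constant multiple of $g:=g_u\chi_{Z_+}+g_v\chi_{Z_-}$ as a $2$-weak upper gradient of $w$ in $Z$, where $g_u\in L^2(Z_+)$ and $g_v\in L^2(Z_-)$ are $2$-weak upper gradients of $u$ and $v$; since $g\in L^2(Z,\mu_a)$, this gives $w\in D^{1,2}(Z)$. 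That $w\in L^1_{\mathrm{loc}}(Z,\mu_a)$ is immediate from $\mu_a(X\times\{0\})=0$ and $u\in L^1_{\mathrm{loc}}(Z_+)$, $v\in L^1_{\mathrm{loc}}(Z_-)$. Note also that, since $Z_\pm$ support a $(2,2)$-Poincar\'e inequality, $D^{1,2}(Z_\pm)\subset N^{1,2}_{\mathrm{loc}}(Z_\pm)$, which is what lets us quote Lemma~\ref{lem:cylinder-modulus} below.

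For the slicing condition in the $X$-direction I would apply the elementary (Fubini/Tonelli) direction of Lemma~\ref{lem:tensor-N1p} on a countable exhaustion of $Z_+$ by product balls: this shows that for a.e.\ $y>0$ the slice $w(\cdot,y)=u(\cdot,y)$ lies in $N^{1,2}_{\mathrm{loc}}(X)$ with upper gradient $g_u(\cdot,y)$, and Tonelli applied to $\int_{Z_+}g_u^2\,d\mu_a<\infty$ gives $\int_X g_u(\cdot,y)^2\,d\mu_X<\infty$ for a.e.\ $y>0$, hence $w(\cdot,y)\in D^{1,2}(X)$ for a.e.\ $y>0$; symmetrically for $y<0$ with $v$. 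The slicing condition in the $y$-direction is where the matching hypothesis is used. For a.e.\ $x\in X$, Lemma~\ref{lem:cylinder-modulus} applied to $u$ gives that $y\mapsto u(x,y)$ is absolutely continuous on compact subintervals of $(0,\infty)$ with $|\partial_y u(x,y)|\le g_u(x,y)$; since $\int_0^R g_u(x,y)\,dy\le\big(\int_0^R g_u(x,y)^2 y^a\,dy\big)^{1/2}\big(\int_0^R y^{-a}\,dy\big)^{1/2}<\infty$ for a.e.\ $x$ (here $a<1$ enters), $\lim_{y\to0^+}u(x,y)$ exists, and the telescoping/Poincar\'e computation in the proof of Proposition~\ref{prop-Trace} identifies it with $T_+u(x)$ for a.e.\ $x$. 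The analogous statement holds for $v$ on $(-\infty,0)$ with limit $T_-v(x)$. Since $T_+u=T_-v$ $\mu_X$-a.e.\ and $w(x,0)$ was defined to be this common value, $y\mapsto w(x,y)$ is continuous at $0$ for a.e.\ $x$; splicing the two one-sided fundamental-theorem-of-calculus identities (using continuity at $0$ and $L^1$-integrability of $\partial_y w(x,\cdot)$ near $0$) shows it is absolutely continuous on every compact subinterval of $\R$, while Tonelli gives $\int_\R|\partial_y w(x,y)|^2|y|^a\,dy<\infty$ for a.e.\ $x$, and $(x,y)\mapsto\partial_y w(x,y)$ is $\mu_a$-measurable and dominated by $g$, hence in $L^2(Z,\mu_a)$.

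With both slicing conditions verified, the proof of Lemma~\ref{lem:tensor-N1p} applies and, because $X\times\{0\}$ is $\mu_a$-null (so no modification on a set of positive measure is forced, and the $Y=\R$ refinement in that proof handles the capacity-zero subsets of a vertical line being empty), it yields that $w$ itself — with the prescribed value $w(x,0)=T_+u(x)$ — belongs to $D^{1,2}(Z)$. The main obstacle, and the one place where the argument is not a formal consequence of Tonelli and one-dimensional absolute continuity, is the identification $\lim_{y\to0^+}u(x,y)=T_+u(x)$ together with its $Z_-$-analogue: these are exactly what allow the hypothesis $T_+u=T_-v$ to be converted into the statement that $w(x,\cdot)$ has no jump across $y=0$, which is the only mechanism by which the gluing can fail. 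A secondary, purely bookkeeping, point is that the setting is homogeneous (nonzero constants have finite energy), so one applies the \emph{proof} of Lemma~\ref{lem:tensor-N1p} rather than its literal $L^2$-level statement.
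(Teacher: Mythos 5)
Your approach is genuinely different from the paper's, and it has a gap at the final step.

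The paper decomposes $w=w_0+\pip$, where $w_0$ is the symmetric reflection across $X\times\{0\}$ of the Whitney-type extension $E(T_+u)$ from Proposition~\ref{prop-Trace} (so $w_0$ is locally Lipschitz on $Z\setminus(X\times\{0\})$ with $\mathrm{Lip}\,w_0\in L^2(Z,\mu_a)$), and $\pip=w-w_0$ satisfies $T_+\pip=T_-\pip=0$. For $w_0$ the paper invokes tensorization, while for $\pip$ it argues \emph{directly on curves}: it shows that the zero extension $\pip_+$ of $\pip\vert_{\overline{Z_+}}$ satisfies the upper gradient inequality on every non-constant compact rectifiable curve in $Z$, by splitting any curve at its first meeting with $X\times\{0\}$ and using $\pip_+=0$ on $\overline{Z_-}$ together with $\pip\vert_{\overline{Z_+}}\in D^{1,2}(\overline{Z_+})$; the analogous claim for $\pip_-$ and additivity then give $\pip\in D^{1,2}(Z)$. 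You instead feed $w$ itself into Lemma~\ref{lem:tensor-N1p}.

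The gap is in the last sentence of your argument. Lemma~\ref{lem:tensor-N1p} (and the theorem from \cite{HKSTbook} that powers it) only produces a function $f_0\in D^{1,2}(Z)$ that agrees with $w$ $\mu_a$-a.e.; the ``$Y=\R$ refinement'' in that proof improves this to $f_0=w$ on $(X\setminus N)\times\R$ for some $N\subset X$ with $\mu_X(N)=0$. But membership in $D^{1,2}(Z)$ is a pointwise, capacity-level property, not a measure-class one: it requires the upper gradient inequality to hold on $\Mod_2$-a.e.\ rectifiable curve, which is unaffected by redefining $w$ on a $\text{Cap}_2$-null set but \emph{is} affected by redefining it on a merely $\mu_a$-null set. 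The exceptional set $N\times\R$ is $\mu_a$-null, but it need not be $\text{Cap}_2$-null in $Z$: Lemma~\ref{lem:cylinder-modulus} controls vertical curves through $N$, not the horizontal curves $t\mapsto(\beta(t),y_0)$ that may pass through $N\times\{y_0\}$ and carry positive $2$-modulus. So verifying the slicing conditions for $w$ tells you some representative of $w$ lies in $D^{1,2}(Z)$, not that $w$ itself does. The paper's reduction to the zero-trace piece $\pip$ and the explicit curve-splitting verification of the upper gradient inequality is precisely what sidesteps this representative issue; your argument needs either that reduction or a separate argument that $w$ is already the ``good'' representative (e.g.\ by showing directly that $w$ is absolutely continuous along $\Mod_2$-a.e.\ curve in $Z$, which is not a consequence of the two slicing conditions alone).

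A secondary remark: your identification of the radial limit $\lim_{y\to0^+}u(x,y)$ with $T_+u(x)$ for $\mu_X$-a.e.\ $x$ is correct and does follow from the argument in Proposition~\ref{prop-Trace} (via the extension of $u$ to $\overline{Z_+}$ on the uniform domain $Z_+$ and the telescoping along $\gamma_x$), so that part of your plan is sound; it is the passage from ``some $\mu_a$-representative of $w$ is in $D^{1,2}(Z)$'' to ``$w\in D^{1,2}(Z)$'' that is missing.
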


\begin{proof}
Let $f=T_+u$. Then from Proposition~\ref{prop-Trace} we know that $f\in B^\theta_{2,2}(X)$; we set 
$w_0$ on $Z$ by 
\[
w_0(x,y)=\begin{cases}Ef(x,y)&\text{ if }y>0,\\
   f(x)&\text{ if }y=0,\\
   Ef(x,-y)&\text{ if }y<0. \end{cases}
\]
Then from the properties of $Ef$ obtained in the proof of Proposition~\ref{prop-Trace} and from Lemma~\ref{lem:tensor-N1p}
we know that $w_0\in D^{1,2}(Z)$. By the local nature of minimal $2$-weak upper gradients,  it follows that with $g$ the minimal
$2$-weak upper gradient of $Ef$, we have that $g_0(x,y)=g(x,y)$ when $y>0$ and $g_0(x,y)=g(x,-y)$ when $y<0$ is the
minimal $2$-weak upper gradient of $w_0$ in $Z$.  

Now, consider $\pip:=w-w_0$. Then $T_+\pip=0=T_-\pip$. Note that $\pip\vert_{Z_+}\in D^{1,2}(Z_+)$ and 
$\pip\vert_{Z_-}\in D^{1,2}(Z_-)$. Hence from~\cite{BjSh}, $\pip\vert_{\overline{Z_+}}\in D^{1,2}(\overline{Z_+})$
and $\pip\vert_{\overline{Z_-}}\in D^{1,2}(\overline{Z_-})$.  We now show that the zero extension of 
$\pip\vert_{\overline{Z_+}}$ to $Z$ is in $D^{1,2}(Z)$. Indeed, if $g\in L^2(Z_+)$ is an upper gradient of 
$\pip\vert_{\overline{Z_+}}$ and $\gamma$ is a non-constant compact rectifiable curve in $Z$ with end points denoted
$x$ and $y$, then if both $x,y\in\overline{Z_-}$, we have that 
$\pip\vert_{\overline{Z_+}}(y)=0=\pip\vert_{\overline{Z_+}}(x)$ and so the upper gradient inequality is satisfied by
the pair $\pip\vert_{\overline{Z_+}}, g$ with respect to $\gamma$. Hence, without loss of generality, we may 
assume that $x\in Z_+$.
If $\gamma$ lies entirely in $\overline{Z_+}$, then again the above pair satisfies the upper gradient inequality 
with respect to $\gamma$. If $\gamma$ intersects both $Z_+$ and $Z_-$, then 
by breaking $\gamma$ into two subcurves if necessary, we may assume that $y\in \overline{Z_-}$. Now
let $\beta$ be the largest subcurve of $\gamma$ beginning at $x$ and
lying entirely in $Z_+$, and let $z\in X\times\{0\}$ be the end-point of $\beta$. 
Then the closure of the path $\beta$ has $z$ as its
terminal point, and this compact path, also denoted $\beta$, lies entirely in $\overline{Z_+}$. Therefore 
the pair $\pip\vert_{\overline{Z_+}}, g$ satisfies the upper gradient inequality with respect to $\beta$. Since
$\pip\vert_{\overline{Z_+}}(z)=\pip_+(z)=0=\pip_+(y)$, with $g_+$ the zero-extension of $g$ to $Z$, it also follows that
$\pip_+, g_+$ satisfy the upper gradient inequality on the subcurve of $\gamma$ left-over from $\beta$. 
Hence $\pip_+, g_+$ satisfy the upper gradient inequality on $\gamma$ itself. It follows from the arbitrariness of
$\gamma$ that $g_+\in L^2(Z)$ is an upper gradient of $\pip_+$, that is, $\pip_+\in D^{1,2}(Z)$. 
A similar argument gives that $\pip_- \in D^{1,2}(Z)$. Therefore $\pip=\pip_++\pip_-$ lies in $D^{1,2}(Z)$. Finally it follows
that $w=\pip+w_0\in D^{1,2}(Z)$, completing the proof.
\end{proof}

\section{Existence and uniqueness of Cheeger-Harmonic Extensions}\label{Sect:Exist-CheegerHarm}

 Given a bounded domain $\Omega\subset X$, let us denote 
 \[
 U_{\Omega}=Z_+\cup Z_-\cup (\Omega\times\{0\}).
 \] 
 Given a function $f$ on $X$, we want to consider the Cheeger harmonic solutions to the 
 Dirichlet problem in $U_{\Omega}$ with boundary values $f$ on 
 $\partial U_{\Omega}= (X\setminus\Omega)\times \{0\}$.  That is,  we want to find a 
 function $u\in D^{1,2}(U_{\Omega})$ such that 
\begin{equation}
\label{eq:DP}
\Delta_a u=0\text{ on }U_{\Omega}\ \text{ with }\ Tu=f\text{ on } (X\setminus\Omega)\times\{0\}.
\end{equation}
Here, $\Delta_a$ is the infinitesimal generator on $Z$ associated with the Cheeger differential structure 
constructed in Subsection~\ref{Subsect:diff-struct-tensor} above. Observe that the first condition above is equivalent to knowing
that whenever $h\in D^{1,2}(U_{\Omega})$ with compact support in the domain 
$U_{\Omega}$, then as $\mu_a(Z\setminus U_\Omega)=0$ and by Lemma~\ref{lem:pasting}, we have
\begin{equation}\label{eqn:weaksolution}
\Ead(u,h)=\int_{U_{\Omega}}\langle D_a u(x,y),D_ah(x,y)\rangle_{(x,y)}\, d\mu_a(x,y)=0.
\end{equation}
Here $D_a$ is the tensor product of the Cheeger differential form $D_X$ on $X$ and the Euclidean differential form on $\R$,
see Remark~\ref{rm:Da-deff} above.

Observe that a function $u$ that satisfies~\eqref{eq:DP} as above for all compactly supported $h\in D^{1,2}(U_\Omega)$ also 
is a minimizer of the Cheeger energy in the following sense: for each $v\in D^{1,2}(Z)$ such that $v=u$ outside a compact
subset of $U_\Omega$, we have
\begin{equation}\label{eqn:minproblem1}
\Ead(u,u)\le \Ead(v,v).
\end{equation}

\begin{remark}\label{rem:Cheeger-harm-qmin}
Combining the construction of the Cheeger differential structure $D_a$ from Subsection~\ref{Subsect:diff-struct-tensor}
and Lemma~\ref{lem:tensor-innerprod-uppergrad} tell us that 
a Cheeger harmonic function $u$  in $U$ is a quasiminimizer in the sense of~\cite{KiSh}, and so is necessarily locally
H\"older continuous on $U$, and if it is non-negative, then satisfies a Harnack inequality also.
\end{remark}

\begin{thm}\label{thm:cheegerminimizer} 
Suppose that $\Omega\subset X$ is a bounded domain with $\mu_X(X\setminus\Omega)>0$. 
Then, there exists a linear extension operator $H: B^\theta_{2,2}(X)\rightarrow D^{1,2}(Z)$ 
such that  $u_{f}=Hf$ is the unique Cheeger harmonic function in $U_{\Omega}$ with $Tu=f$ on $\partial U_\Omega$ for every 
$f\in B^\theta_{2,2}(X)$. Moreover $\|Hf\|_{\infty}\leq \|f\|_{\infty}$ whenever $f$ is bounded.
\end{thm}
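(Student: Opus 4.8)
The plan is to produce $Hf$ as the minimizer of the Cheeger energy $\Ead$ over a closed affine subspace of $D^{1,2}(Z)$ determined by $f$, and then read off every asserted property. First I would fix a reference extension: by Proposition~\ref{prop-Trace} there is a bounded linear operator $E\colon B^\theta_{2,2}(X)\to D^{1,2}(Z_+)$ with $TEf=f$, and the formula $Ef=\sum_{i,n}u_{B_{i,n}}\varphi_{i,n}$ from its proof exhibits $Ef$ as a convex combination of averages of $f$, so $\|Ef\|_\infty\le\|f\|_\infty$. Reflecting, $F(x,y):=Ef(x,|y|)$ lies in $D^{1,2}(Z)$ by Lemma~\ref{lem:pasting} (applied with $u=Ef$ on $Z_+$ and its mirror image on $Z_-$), has $T_+F=T_-F=f$, depends linearly on $f$, and satisfies $\|F\|_\infty\le\|f\|_\infty$. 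Let $\mathcal H:=N^{1,2}_0(U_\Omega)$ be the closure, in the energy seminorm $\sqrt{\Ead(\cdot,\cdot)}$, of the Lipschitz functions with compact support in $U_\Omega$; by Lemma~\ref{lem:tensor-innerprod-uppergrad} this seminorm is comparable to $(\int_Z g^2\,d\mu_a)^{1/2}$, so it vanishes only on constants, and since $\mu_X(X\setminus\Omega)>0$ the last assertion of Lemma~\ref{lem:cylinder-modulus} gives $\text{Cap}_2((X\setminus\Omega)\times\{0\})>0$, so $\mathcal H$ contains no non-zero constant. Hence $(\mathcal H,\Ead)$ is a Hilbert space.

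Next I would minimize $w\mapsto\Ead(F+w,F+w)=\Ead(w,w)+2\Ead(F,w)+\Ead(F,F)$ over $w\in\mathcal H$. This is a continuous, coercive, strictly convex quadratic on a Hilbert space, so it has a unique minimizer $w_f$, characterized by $\Ead(F+w_f,h)=0$ for all $h\in\mathcal H$; equivalently $w_f$ is the Riesz representative in $(\mathcal H,\Ead)$ of $h\mapsto-\Ead(F,h)$, so $f\mapsto w_f$, and hence $Hf:=F+w_f\in D^{1,2}(Z)$, is linear. Testing against $h\in\text{Lip}_c(U_\Omega)$ (which approximate, in energy, all compactly supported functions of $N^{1,2}(U_\Omega,\mu_a)$) gives $\Ead(Hf,h)=0$, and since $D^{1,2}(Z)\subset N^{1,2}_{loc}(U_\Omega,\mu_a)$ in a doubling $2$-Poincar\'e space, $Hf$ is Cheeger harmonic in $U_\Omega$ in the sense of Definition~\ref{def:CheegerHarm}. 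For the boundary data, $T_\pm\colon D^{1,2}(Z_\pm)\to B^\theta_{2,2}(X)$ are bounded and $T_\pm h=0$ on $X\setminus\Omega$ for $h\in\text{Lip}_c(U_\Omega)$ (its compact support avoids $(X\setminus\Omega)\times\{0\}$), so passing to the limit gives $T_\pm w_f=0$ on $X\setminus\Omega$ and therefore $T(Hf)=T_\pm F=f$ on $\partial U_\Omega$. Finally, any Cheeger harmonic function in $U_\Omega$ lying in $F+\mathcal H$ satisfies the same identity $\Ead(\cdot,h)=0$ for all $h\in\mathcal H$, hence equals $Hf$ by strict convexity; that is the uniqueness.

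For the sup-norm bound, suppose $M:=\|f\|_\infty<\infty$ and set $\phi_M(t)=\max\{-M,\min\{t,M\}\}$, a $1$-Lipschitz map fixing $0$. Then $\phi_M(Hf)\in D^{1,2}(Z)$ and, by the chain rule for $D_a$ (the Markovian property of the Cheeger Dirichlet form), $\Ead(\phi_M(Hf),\phi_M(Hf))\le\Ead(Hf,Hf)$. Moreover $\phi_M(Hf)\in F+\mathcal H$: since $\|F\|_\infty\le M$ we have $\phi_M(F)=F$, so $\phi_M(Hf)-F=\phi_M(F+w_f)-\phi_M(F)$, which, writing $w_f=\lim_k h_k$ in energy with $h_k\in\text{Lip}_c(U_\Omega)$ and using that post-composition with a Lipschitz map is continuous on Newton--Sobolev functions, is the energy-limit of $\phi_M(F+h_k)-\phi_M(F)$; each of these is Lipschitz, supported inside $\operatorname{supp}h_k$, hence in $\mathcal H$, and $\mathcal H$ is closed. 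Thus $\phi_M(Hf)$ is a competitor in $F+\mathcal H$ with energy no larger than that of the minimizer $Hf$, so uniqueness of the minimizer forces $\phi_M(Hf)=Hf$, i.e.\ $|Hf|\le M$ $\mu_a$-a.e.\ on $Z$; in particular $\|Hf\|_\infty\le\|f\|_\infty$.

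The step I expect to be the main obstacle is checking that $F+\mathcal H$ is the correct competitor class, i.e.\ that \emph{every} $v\in D^{1,2}(Z)$ with $T_\pm v=0$ on $X\setminus\Omega$ already lies in $N^{1,2}_0(U_\Omega)$ (this is what upgrades the uniqueness above from ``within $F+\mathcal H$'' to ``among all Cheeger harmonic functions in $D^{1,2}(Z)$ with the given trace''). Decomposing $v$ into its even and odd parts in $y$, the odd part vanishes on all of $X\times\{0\}$ and can be cut off there by a Hardy-type inequality obtained from Cauchy--Schwarz against the $A_2$-weight $|y|^a$ (with Corollary~\ref{cor:Banach} justifying the vertical fundamental theorem of calculus); the even part, however, must be cut off near $(X\setminus\Omega)\times\{0\}$ along $\partial\Omega\times\{0\}$, and controlling the cutoff-gradient term there is the delicate point, where I expect to need the uniform-domain structure of $Z_+$ (Proposition~\ref{prop:unif+PI}) together with a capacitary estimate of Maz'ya type (cf.~\eqref{eq:Mazya}). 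Everything else—existence and uniqueness of the minimizer, linearity of $H$, and the sup-norm bound—is soft Hilbert-space theory plus the Markov property of $\Ead$.
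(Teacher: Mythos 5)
Your proposal takes a genuinely different route from the paper's. The paper runs the direct method on the affine class $\mathcal A=\{v\in D^{1,2}(Z): Tv=f\text{ on }\partial U_\Omega\}$: it takes a minimizing sequence, shows it is Cauchy in energy via the parallelogram identity, extracts an $L^2_{loc}$ limit using the Maz'ya capacitary inequality~\eqref{eq:Mazya}, and identifies the limit as the unique minimizer. You instead present the competitor class as an affine slice $F+\mathcal H$ of a Hilbert space and obtain the minimizer as a Riesz representative; this has the advantage that linearity of $H$ falls out immediately, whereas the paper deduces it from uniqueness and, for general $f$, a telescoping construction $Hf=\sum_k Hf_k$ with $f_k=\max\{\min\{f-k,1\},0\}$ followed by $Hf=Hf^+-Hf^-$. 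Your Markov-property argument for $\|Hf\|_\infty\le\|f\|_\infty$ (truncate $Hf$ and compare energies, keeping the truncated competitor inside $F+\mathcal H$ via the Lipschitz functional calculus) is cleaner and more uniform than the paper's device of truncating the minimizing sequence. One detail worth adding explicitly is that the abstract energy-completion $\mathcal H$ is realized by actual functions in $D^{1,2}(Z)$; the Maz'ya argument the paper applies to its minimizing sequence serves exactly this purpose, producing $L^2_{loc}$ convergence (hence a function-valued limit) for any energy-Cauchy sequence of compactly supported Lipschitz functions.

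The obstacle you flag is genuine, and you should note that the paper's own proof relies on the same unproved inclusion. Its uniqueness argument computes $\Ead(u_f-v,u_f-v)=2\Ead(u_f,u_f-v)$ and appeals to ``the Euler--Lagrange formulation'' to conclude the right-hand side vanishes; this tacitly uses that $u_f-v$ (which has zero trace on $\partial U_\Omega$ but certainly not compact support in $U_\Omega$) can be approximated in energy by the test functions of Definition~\ref{def:CheegerHarm}. That is precisely the inclusion $\mathcal A-F\subseteq\mathcal H$ you identify, and the earlier claim $\Ead(u_f,u_f)=\Ead(v,v)$ (for $v$ merely Cheeger harmonic with the right trace, not a priori a minimizer) likewise requires it. Strict convexity cleanly gives uniqueness \emph{of minimizers over $\mathcal A$} with no such ingredient, but the theorem asserts uniqueness among all Cheeger harmonic functions in $D^{1,2}(Z)$ with the given trace, which does need it. Your sketch for closing the gap (parity decomposition in $y$, Hardy-type estimate against the $A_2$ weight, and a capacitary cutoff near $(X\setminus\Omega)\times\{0\}$ exploiting the uniformity of $Z_\pm$ from Proposition~\ref{prop:unif+PI}) points in the right direction; carrying it out would be the substantive work in this approach, and would actually tighten the paper's argument as well.
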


\begin{proof}
Fix $f\in B^{\theta}_{2,2}(X)$. Let us consider the minimization problem
\begin{equation}\label{eqn:minproblem}
E_{min}=\inf_{v\in\mathcal A} \Ead(v,v),
\end{equation}
where the class of admissible functions $\mathcal A$ consists of functions $u\in D^{1,2}(Z)$ such that $Tu=f$ on 
$\partial U_{\Omega}$. We know that the infimum above is finite as we can extend symmetrically the extension 
$Ef$ given by  
Proposition~\ref{prop-Trace} to $Z_{-}$, see also Lemma~\ref{lem:pasting}. Let $\{u_{k}\}_{k}$ be a minimizing sequence 
for~\eqref{eqn:minproblem}. Let us start by considering the case $|f|\leq 1$ at every point. In this case, 
we can assume without loss of generality that also $|u_{k}|\leq 1$ everywhere as truncation to interval $[-1,1]$ 
can only decrease the energy $\mathcal E_{a}(u_{k},u_{k})$.

Next we prove that 
\[
\Ead(u_{k}-u_{l},u_{k}-u_{l})\rightarrow 0 \quad\textrm{as }\quad k,l\rightarrow \infty.
\]
Let $u_{k,l}=\tfrac12 (u_{k}+ u_{l})$. Notice that $u_{k,l}$ satisfies also the boundary condition 
$Tu_{k,l}=f$ as  $Tu_{k}=Tu_{l}=f$. By the triangle inequality, 
\[\Ead (u_{k,l}, u_{k,l})\leq \tfrac12 \left(\Ead (u_{k}, u_{k}) +\Ead (u_{l}, u_{l}) \right).
\]
 Thus we can conclude that $\Ead(u_{k,l},u_{k,l})\rightarrow E_{min}$ as $k,l\rightarrow\infty$.
Now it follows by the parallelogram identity that
\[
\lim_{k,l\rightarrow\infty}\Ead(u_{k}-u_{l},u_{k}-u_{l})
=\lim_{k,l\rightarrow\infty} \left(2 \Ead(u_{k},u_{k}) +2 \Ead(u_{l},u_{l})-4\mathcal E_{a}(u_{k,l},u_{k,l})\right)=0.
\]

By assumption, $\mu_X(X\setminus\Omega)>0$. Hence for sufficiently large balls $B\subset Z$ centered at a point
in $\Omega\times\{0\}$ we have $\mu_X(B\cap(X\setminus\Omega))>0$. If $X$ is unbounded, we can in addition 
consider a ball $B\subset Z$ centered at $X\times \{0\}$ and with large enough radius so that 
$\mu_{X}(B\cap \Omega\times \{0\})\leq  \tfrac12\mu_{X}(B\cap X\times\{0\})$.
Either way, we see that for sufficiently large balls $B$ we have from Lemma~\ref{lem:cylinder-modulus}
that $\text{Cap}_2(B\cap\{u_k-u_l=0\})\ge \widehat{C}_B$ for some $\widehat{C}_B>0$ that is independent of $k,l$.  
Therefore by~\eqref{eq:Mazya}, we have 
\[
\int_{B}|(u_{k}-u_{l})|^2\mu_{a}
  \leq C_B\, \int_{B}|D_{a}u_{k}-D_{a}u_{l}|^2d\mu_{a}\rightarrow 0\quad\textrm{as }k,l\rightarrow\infty.
\]
As the above inequality holds for all large enough balls, this implies that there exists a limiting function $u_{f}\in D^{1,2}(Z)$ such that 
$u_{k}\rightarrow u_{f}$ in $L^2_{loc}(Z)$. This limit is obtained as follows. For each ball $B\subset Z$ centered at a
point in $(x,0)\in Z$ with a fixed choice of $x\in \Omega$, we have $u_k\to u_f$ in $N^{1,2}(B)$, and as 
$u_k-Ef\in N^{1,2}_0(\overline{B}\cap (Z_{\pm}\cup\Omega\times\{0\}))$, it follows from the Banach space property of $N^{1,2}(\overline{B}\cap Z_{\pm})$
that $u_f-Ef\in N^{1,2}(\overline{B}\cap Z_{\pm})$. As this happens for all balls $B$, it follows that pointwise $\mu_X$-a.e.~we
have $Tu_f=f$ as given by~\eqref{eq:Trace}.

As $\{\Ead(u_k,u_k)\}$ converges to the minimum and $Tu_{k}=f$ on $\partial U_{\Omega}$, the function $u_{f}$ is the 
unique global minimizer of \eqref{eqn:minproblem}. 
To see the uniqueness of the minimizer, suppose that $v\in D^{1,2}(Z)$ is also a solution. Then, as 
$u=v=f$ on $\partial U_\Omega$, it follows that $\Ead(u_f,u_f)=\Ead(v,v)$. Therefore,
\[
\Ead(u_f-v,u_f-v)=2\Ead(u_f,u_f)-2\Ead(u_f,v)=2\Ead(u_f,u_f-v)=0,
\]
the latter equality following from the Euler-Lagrange formulation that equate minimization of the energy
$\Ead$ with Definition~\ref{def:CheegerHarm}. It follows from the Poincar\'e inequality
that $u_f-v$ is a constant on $Z$. Since $Tu_f=Tv=f$ on $X\setminus\Omega$ and the $2$-capacity of 
$X\setminus\Omega$ is positive (see Lemma~\ref{lem:cylinder-modulus}), 
it follows that $u_f=v$, that is, the solution is unique.

We set $Hf=u_{f}$. As a minimizer of \eqref{eqn:minproblem}, 
$u_{f}$ is a Cheeger harmonic function in the sense that it satisfies \eqref{eqn:weaksolution} in $U_{\Omega}$. 
For unbounded and nonnegative $f$, we define the extension operator as
\[
Hf=\sum_{k=0}^\infty Hf_{k},
\]
where $f_{k}=\max\{\min\{f-k,1\},0\}$. Observe that
\[
\Eadp(Hf,Hf)=\sum_{k=0}^\infty \Eadp(Hf_k,Hf_k).
\]
Then for general $f\in B^{2,2}_{\theta}(X)$, we set 
$Ef=Ef^+-Ef^-$, where $f^+$ and $f^-$ are the positive and negative parts of $f$. As linear combinations of 
Cheeger harmonic functions are also Cheeger harmonic, this construction leads to Cheeger harmonic 
extensions of $f$, which are the unique global minimizers of the energy \eqref{eqn:minproblem}.
Note that
\[
\Eadp(Hf,Hf)=\Eadp(Hf^+,Hf^+)+\Eadp(Hf^-,Hf^-).
\]
\end{proof}

\begin{prop}\label{prop:symmetrysolution}
The extension given by Theorem~\ref{thm:cheegerminimizer} is symmetric, namely we have $u(x,y)=u(x,-y)$ for every $(x,y)\in Z$.
\end{prop}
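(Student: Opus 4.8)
The plan is to show that the reflection $u^{*}(x,y):=u(x,-y)$ of $u=Hf$ is again a Cheeger harmonic extension of $f$ across $\partial U_\Omega$, and then to invoke the uniqueness in Theorem~\ref{thm:cheegerminimizer}.

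First I would record the properties of the reflection map $R\colon Z\to Z$, $R(x,y)=(x,-y)$: it is an isometry of $(Z,d_Z)$ with $R\circ R=\mathrm{id}$, it maps $U_\Omega$, $Z_+$, $Z_-$ onto $U_\Omega$, $Z_-$, $Z_+$ respectively, it fixes $\partial U_\Omega=(X\setminus\Omega)\times\{0\}$ setwise, and it preserves $\mu_a$ since $|{-y}|^a=|y|^a$. Consequently precomposition $w\mapsto w\circ R$ is a bijection of $D^{1,2}(Z)$ (and of $N^{1,2}_{\mathrm{loc}}(U_\Omega,\mu_a)$) onto itself, and from the tensor construction of $D_a$ in Subsection~\ref{Subsect:diff-struct-tensor} one has, for $\mu_a$-a.e.\ $(x,y)$, that $D_X(w\circ R)(x,y)=(D_Xw)(x,-y)$ and $\partial_y(w\circ R)(x,y)=-(\partial_yw)(x,-y)$, so that
\[
\langle D_a(w\circ R)(x,y),D_a(v\circ R)(x,y)\rangle_{(x,y)}=\langle D_aw(x,-y),D_av(x,-y)\rangle_{(x,-y)}
\]
for all $w,v$ for which the expressions make sense.

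Next, set $u^{*}:=u\circ R\in D^{1,2}(Z)$. I would check that $Tu^{*}=f$ on $\partial U_\Omega$ and that $u^{*}$ is Cheeger harmonic in $U_\Omega$. For the trace: any $w\in D^{1,2}(Z)$ is absolutely continuous on $\mu_X$-a.e.\ vertical segment $t\mapsto(x,t)$ by Lemma~\ref{lem:cylinder-modulus}, so $\lim_{y\to0^{+}}w(x,y)=\lim_{y\to0^{-}}w(x,y)$ for $\mu_X$-a.e.\ $x$; applying this to $w=u$ gives, for $\mu_X$-a.e.\ $x\in X\setminus\Omega$, that $Tu^{*}(x)=\lim_{y\to0^{+}}u(x,-y)=\lim_{y\to0^{-}}u(x,y)=Tu(x)=f(x)$. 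For Cheeger harmonicity: if $h\in N^{1,2}(U_\Omega,\mu_a)$ has compact support in $U_\Omega$, then so does $h\circ R$ (since $R=R^{-1}$ and $R(U_\Omega)=U_\Omega$), and substituting $y\mapsto-y$ in the integral defining $\Ead$ and using the identities above gives
\[
\Ead(u^{*},h)=\int_{U_\Omega}\langle D_au(x,y),D_a(h\circ R)(x,y)\rangle_{(x,y)}\,d\mu_a=\Ead(u,h\circ R)=0,
\]
the last equality because $u$ is Cheeger harmonic in $U_\Omega$ and $h\circ R$ is an admissible test function.

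Finally, the uniqueness statement of Theorem~\ref{thm:cheegerminimizer} forces $u^{*}=u$, i.e.\ $u(x,-y)=u(x,y)$ for $\mu_a$-a.e.\ $(x,y)\in Z$; since $u=Hf$ is locally H\"older continuous on $U_\Omega$ by Remark~\ref{rem:Cheeger-harm-qmin} and $\mu_a(Z\setminus U_\Omega)=0$, this identity holds at every point of $Z$, the values on $(X\setminus\Omega)\times\{0\}$ being, as usual, determined only $\mu_X$-a.e. The only point I expect to require care is the verification that $R$ genuinely preserves the Cheeger differential structure $D_a$ and hence the weak formulation~\eqref{eqn:weaksolution}; this is however immediate from the tensorial construction, because the Euclidean differential structure on $(\R,d_{Euc},|y|^a\,dy)$ is invariant under $y\mapsto-y$, the induced sign change in $\partial_y$ being squared away in the inner product.
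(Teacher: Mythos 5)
Your proof is correct, and it takes a slightly different route than the paper's. You show that the reflected function $u^{*}=u\circ R$ is itself a Cheeger harmonic extension of $f$ with the same trace, and then invoke the uniqueness assertion of Theorem~\ref{thm:cheegerminimizer} to force $u^{*}=u$. The paper instead forms the average $\widetilde u=\tfrac12\bigl(u+u\circ R\bigr)$, notes that $\widetilde u$ is admissible, observes $\mathcal E_a(\widetilde u,\widetilde u)\le\tfrac12\bigl(\mathcal E_a(u,u)+\mathcal E_a(u\circ R,u\circ R)\bigr)=\mathcal E_a(u,u)$ by convexity together with $R$-invariance of the energy, and then uses strict convexity of the quadratic form (plus the trace condition and Poincar\'e) to conclude $u=\widetilde u$, hence $u=u\circ R$. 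Both arguments ultimately rest on exactly the same two ingredients: that $R$ preserves $\mu_a$, $U_\Omega$, and the tensorized Cheeger inner product, and that the energy is strictly convex on the class of functions with the given trace. The paper's symmetrization argument is a hair more economical, since it only needs the equality $\mathcal E_a(u\circ R,u\circ R)=\mathcal E_a(u,u)$ and not the full verification that $u\circ R$ satisfies the weak formulation~\eqref{eqn:weaksolution}; your proof does more work in establishing harmonicity of the reflection, but in exchange it is conceptually cleaner — ``the problem is $R$-invariant and the solution is unique, so the solution is $R$-invariant'' — and the verification you supply is complete and accurate. One small clarification is in order: the coincidence of the one-sided traces for a function in $D^{1,2}(Z)$ along $\mu_X$-a.e.\ vertical line, which you justify via Lemma~\ref{lem:cylinder-modulus}, is indeed the right mechanism, and your caveat that values on $(X\setminus\Omega)\times\{0\}$ are only determined $\mu_X$-a.e.\ is an appropriate gloss on the ``for every $(x,y)\in Z$'' in the statement.
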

\begin{proof}
Let $u$ be a minimizer of \eqref{eqn:minproblem}.  Let us define
\[
\widetilde u(x,y)=\frac12\left(u(x,y)+u(x,-y)\right).
\]
As $u(x,-y)$ has the same trace as $u(x,y)$ on $X\setminus\Omega$, we have $T\widetilde u=f(x)$ for almost 
every $x\in X\setminus\Omega$ and thus $\widetilde u$ satisfies the trace condition for the minimization problem. 
By the inner product structure of the energy, we see that 
\[
\mathcal E_{a}(\widetilde u, \widetilde u)
\leq \tfrac12(\mathcal E_{a}( u(x,y), u(x,y))+ \mathcal E_{a}( u(x,-y), u(x,-y)))= \mathcal E_{a}( u, u)
\] 
and the equality holds only if $D_{a}u= D_{a}\widetilde u$ $\mu_{a}$-almost everywhere, which implies that 
$\widetilde u=u$ and thus $u$ is symmetric.
\end{proof}

\section{Cheeger Harmonic Solution in \texorpdfstring{$U_{\Omega} \subset Z$}{Uomega}}
\label{Sect:Ex-Sol-DP}

\begin{thm}\label{thm:cheegerharm2} 
Suppose $f \in B_{2,2}^{\theta}(X) \cap L^2(X)$ such that $f$ is a solution to the Dirichlet problem $(-\Delta_X)^{\theta} f=0$ on $\Omega$.  Define the function 
$\tilde{u}$ by $\tilde{u}(x,0)=f(x)$ and $\tilde{u}(x,y)=\Pi_af(x,|y|)$ when $y\ne 0$, where $\Pi_af$ is defined 
in~\eqref{eq:explicit}. Then $\tilde{u}$ is Cheeger harmonic in $U_{\Omega}$. 
\end{thm}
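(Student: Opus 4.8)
\medskip
\noindent\textbf{Sketch of proof.} The plan is to verify the two clauses of Definition~\ref{def:CheegerHarm} for $\tilde u$ on $U_\Omega$: that $\tilde u\in N^{1,2}_{loc}(U_\Omega,\mu_a)$, and that $\Ead(\tilde u,v)=0$ for every $v\in N^{1,2}(U_\Omega,\mu_a)$ with compact support in $U_\Omega$. For the first clause, write $u:=\Pi_af$. By Proposition~\ref{prop:trace-energy} we have $u\in D^{1,2}(Z_+)$, and by Lemma~\ref{lem:propertiesextension} together with the trace theory of Proposition~\ref{prop-Trace} the trace of $u$ on $X\times\{0\}$ equals $f$. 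Since $\mu_a$ is invariant under $y\mapsto -y$, the reflection $(x,y)\mapsto u(x,-y)$ lies in $D^{1,2}(Z_-)$ with the same trace $f$, so Lemma~\ref{lem:pasting} yields $\tilde u\in D^{1,2}(Z)$; as $f\in L^2(X)$ and, by Lemma~\ref{lem:propertiesextension}, $y\mapsto u(\cdot,y)$ is continuous into $L^2(X)$, we get $\tilde u\in L^2_{loc}(Z,\mu_a)$ and hence $\tilde u\in N^{1,2}_{loc}(U_\Omega,\mu_a)$.

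\medskip
\noindent The second clause I would reduce to a single identity on $Z_+$. Fix $v\in N^{1,2}(U_\Omega,\mu_a)$ with compact support in $U_\Omega$; by density of Lipschitz functions (using that $Z$ supports a $2$-Poincar\'e inequality, Remark~\ref{remk:Cartesian-PI}) we may assume $v$ is Lipschitz. Since $\mu_a(X\times\{0\})=0$, Lemma~\ref{lem:tensor-innerprod-uppergrad} gives
\[
\Ead(\tilde u,v)=\int_{Z_+}\langle D_au,D_av\rangle_{(x,y)}\,d\mu_a+\int_{Z_-}\langle D_a\tilde u,D_av\rangle_{(x,y)}\,d\mu_a .
\]
Substituting $y\mapsto -y$ in the second integral and using $\tilde u(x,y)=u(x,-y)$ and $\partial_y[u(x,-y)]=-(\partial_yu)(x,-y)$ for $y<0$, the two sign changes cancel and this integral becomes $\int_{Z_+}\langle D_au,D_a\check v\rangle_{(x,y)}\,d\mu_a$ with $\check v(x,y):=v(x,-y)$. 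Both $w:=v|_{Z_+}$ and $w:=\check v|_{Z_+}$ are Lipschitz elements of $N^{1,2}(Z_+,\mu_a)$ whose support is a compact subset of $Z_+\cup(\Omega\times\{0\})$, with trace $Tw=v(\cdot,0)=:\phi$ in both cases; by the last assertion of Proposition~\ref{prop-Trace}, $\phi\in B^\theta_{2,2}(X)$ and it is compactly supported in $\Omega$. Hence it suffices to prove
\begin{equation}\label{plan:key}
\int_{Z_+}\langle D_au,D_aw\rangle_{(x,y)}\,d\mu_a=c_\theta\,\mathcal{E}_\theta(f,Tw)
\end{equation}
for all such $w$, where $c_\theta>0$ is the constant of Proposition~\ref{prop:trace-energy}: granting \eqref{plan:key}, $\Ead(\tilde u,v)=2c_\theta\,\mathcal{E}_\theta(f,\phi)$, which vanishes since $\phi\in B^\theta_{2,2}(X)$ is compactly supported in $\Omega$ and $f$ solves the fractional Dirichlet problem in the Euler--Lagrange sense of Definition~\ref{def:fract-sol}.

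\medskip
\noindent To prove \eqref{plan:key} I would first take $f\in\mathcal{D}((-\Delta_X)^\theta)$. By Fubini, split the left side into $\int_0^\infty y^a\big(\int_X\langle D_Xu(\cdot,y),D_Xw(\cdot,y)\rangle\,d\mu_X\big)\,dy$ and $\int_X\big(\int_0^\infty y^a\,\partial_yu\,\partial_yw\,dy\big)\,d\mu_X$, and on each slab $X\times[T_1,T_2]$ integrate the vertical part by parts in $y$, carrying the factor $w$, exactly as in the computation closing the proof of Lemma~\ref{lem:propertiesextension}. The boundary term at $y=T_2$ vanishes once $T_2$ exceeds the height of the support of $w$; part~(3) of Lemma~\ref{lem:propertiesextension} (the equation $\partial_y^2u+\tfrac ay\partial_yu=-\Delta_Xu$ and $u(\cdot,y)\in\mathcal{D}(\Delta_X)$, so that $\int_X\langle D_Xu(\cdot,y),D_Xw(\cdot,y)\rangle\,d\mu_X=-\int_X(\Delta_Xu(\cdot,y))\,w(\cdot,y)\,d\mu_X$) makes the interior contributions of the two pieces cancel, leaving only the boundary term at $y=T_1$, namely $\int_X\big(-T_1^a\,\partial_yu(x,T_1)\big)\,w(x,T_1)\,d\mu_X(x)$. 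Letting $T_1\to 0^+$ and $T_2\to\infty$ and invoking~\cite[Lemma~3.2]{BLS} for the asymptotics of $-y^a\partial_yu$ in terms of $(-\Delta_X)^\theta f$ (as in the proof of Proposition~\ref{prop:trace-energy}), together with $w(\cdot,T_1)\to Tw$ uniformly on compacta (as $w$ is Lipschitz), identifies the limit with $c_\theta\,\mathcal{E}_\theta(f,Tw)$, via $\int_X((-\Delta_X)^\theta f)\,Tw\,d\mu_X=\mathcal{E}_\theta(f,Tw)$. Finally, both sides of \eqref{plan:key} are continuous in $f$ for the $\mathcal{E}_\theta$-seminorm (the left side by Cauchy--Schwarz and Proposition~\ref{prop:trace-energy}), and $\mathcal{D}((-\Delta_X)^\theta)$ is dense in $\mathcal{F}_\theta=B^\theta_{2,2}(X)\cap L^2(X)$ in that seminorm (as noted in the proof of Proposition~\ref{prop:trace-energy}), so \eqref{plan:key} holds for all $f\in\mathcal{F}_\theta$, in particular for our solution.

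\medskip
\noindent The main obstacle is the integration by parts underlying \eqref{plan:key}: one must justify Fubini and differentiation under the integral sign (for which Lemma~\ref{lem:propertiesextension} and the measurability statement of Corollary~\ref{cor:Banach} are the right tools) and, above all, control the two boundary terms — checking that the term at $y=\infty$ vanishes and pinning down the term at $y=0$ as $(-\Delta_X)^\theta f$ paired with the trace. This is exactly where the sharp estimates of~\cite{BLS} are needed, and where the passage from the dense subclass $\mathcal{D}((-\Delta_X)^\theta)$ to the full Besov class must be carried out carefully, since the solution $f$ need not itself lie in $\mathcal{D}((-\Delta_X)^\theta)$.
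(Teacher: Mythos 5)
Your proof is correct, and in substance it is the same argument the paper gives: integrate by parts in $Z_+$, use the defining PDE $\partial_y^2 u + \tfrac{a}{y}\partial_y u = -\Delta_X u$ from Lemma~\ref{lem:propertiesextension} to kill the interior terms, identify the surviving boundary term at $y\to 0^+$ as a constant multiple of $\int_X ((-\Delta_X)^\theta f)\, \phi\, d\mu_X$ via~\cite[Lemma~3.1/3.2]{BLS}, and conclude by the Euler--Lagrange formulation of Definition~\ref{def:fract-sol} applied to a test trace $\phi$ compactly supported in $\Omega$.

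The two write-ups differ only in bookkeeping. The paper fixes the test function $h$, writes $\Ead(\tilde u,h)=I+J$ with $I$ over $\{|y|\ge\eps\}$ and $J$ over $\{|y|<\eps\}$, integrates $I$ by parts to leave a boundary integral at $|y|=\eps$, shows $I\to 0$ using the dual-space estimate on $y^a\partial_y u$ from~\cite{BLS} together with $h(\cdot,y)\to h(\cdot,0)$ in $L^2\cap B^\theta_{2,2}$, and disposes of $J$ by local integrability of $\Delta_X\tilde u$ and $\partial_y\tilde u$. You instead reduce by the reflection symmetry to a single half-space identity~\eqref{plan:key}, prove it on a slab $[T_1,T_2]\times X$, and take both $T_2\to\infty$ and $T_1\to 0$, absorbing what the paper calls $J$ into the dominated-convergence passage from the slab to $Z_+$ (legitimate since $\langle D_a u,D_a w\rangle\in L^1(Z_+)$ by Cauchy--Schwarz and Proposition~\ref{prop:trace-energy}). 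Your explicit density step from $\mathcal{D}((-\Delta_X)^\theta)$ to $\mathcal{F}_\theta$ mirrors the one in the proof of Proposition~\ref{prop:trace-energy}; the paper's proof of the theorem instead pairs $y^a\partial_y u$ against $h(\cdot,y)$ using a uniform dual-norm bound, which implicitly handles the same issue. One small point worth making explicit in your first clause is that the identification $Tu=f$ is meant in the Lebesgue-point sense of Proposition~\ref{prop-Trace}, not merely as $L^2$ convergence of $u(\cdot,y)$ to $f$ from Lemma~\ref{lem:propertiesextension}; the paper leaves this equally implicit, so it is a shared, not additional, gap.
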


\begin{proof}
Since $u\in D^{1,2}(Z_+)$ by Proposition~\ref{prop:trace-energy} 
and as $\tilde{u}$ is obtained by reflection with the same trace, then $\tilde{u}\in N^{1,2}_{loc}(Z)$.
Fix a compactly supported Lipschitz function $h$ in $D^{1,2}(U_{\Omega}) $.
Let $\eps>0$ small enough such that $\{|y|< \eps \} \cap \text{supp}(h)\subset \subset \Omega \times \R$. By 
Lemma~\ref{lem:tensor-innerprod-uppergrad} and integration by parts, we have
\begin{align*}
&\int_{U_{\Omega}}\langle D_a u(x,y),D_ah(x,y)\rangle_{(x,y)}\, d\mu_a
= \int_{U_{\Omega}} \langle D_X \tilde{u}, D_X h\rangle + \langle \partial_y \tilde{u}, \partial_y h\rangle   d\mu_a\\
 &\int_{U_{\Omega}} -\langle \Delta_X \tilde{u},  h\rangle + \langle \partial_y \tilde{u}, \partial_y h\rangle  d\mu_a 
 =\int_{U_{\Omega} \cap \{|y| \ge \eps \} } -\langle \Delta_X \tilde{u},  h\rangle  
    + \langle \partial_y \tilde{u}, \partial_y h\rangle  d\mu_a \\
 &+ \int_{U_{\Omega} \cap \{|y| < \eps \} } -\langle \Delta_X \tilde{u},  h\rangle  
    + \langle \partial_y \tilde{u}, \partial_y h\rangle   d\mu_a=I+J.
\end{align*}
A straightforward computation shows that 
\begin{align*}
I&=\int_{U_{\Omega} \cap \{|y| \ge \eps \} } -\langle \Delta_X \tilde{u},  h\rangle 
  + \langle \partial_y \tilde{u}, \partial_y h\rangle |y|^a   d\mu_a\\
&=\int_{U_{\Omega} \cap \{|y| \ge \eps \} } -\langle \Delta_X \tilde{u},  h\rangle  - \langle (\dfrac{a}{y}\partial_y 
   + \partial_{yy}) \tilde{u}, h\rangle   d\mu_a + \int_{\partial(U_{\Omega} \cap \{|y| \ge \eps \})  } |y|^a \partial_y u \, h \, d\mu_a\\
&=2\int_{(\Omega \times \R) \cap \{y = \eps \}} |y|^a \partial_y u \, h \, d\mu_X.
\end{align*}
Thanks to \cite[Lemma 3.1]{BLS} we have 
\[
\lim_{y\to 0^+}- \dfrac{2^{2\theta -1} \Gamma(\theta)}{\Gamma(1-\theta)}\int_X y^a\partial_y u \phi \,d\mu_X = \mathcal{E}_\theta(f,\phi),
\]
that vanishes by  assumption, for each $\phi \in L^2(X) \cap B^\theta_{2,2}(X)$. The claim then follows for Lipschitz $h$ since dominated convergence gives $h(\cdot,y) \to h(\cdot, 0)$ in $L^2(X) \cap B^\theta_{2,2}(X)$ as $y\to 0$, and since the argument in \cite[Lemma 3.1]{BLS} shows that $y^a\partial_y u $ is uniformly bounded in the dual of  $L^2 \cap B^{\theta}_{2,2}$.  Therefore $\lim_{\eps \to 0}I=0$.
Moreover, by the proof of~\cite[Lemma 3.1]{BLS}, $\Delta_X \tilde{u}$ and $\tilde{u}_y$ are locally integrable in $Z \setminus \{t = 0\}$, then  we have  $\lim_{\eps \to 0} J=0$.
\end{proof}

\section{Proof of the Main Theorems}\label{sec:proof_main_theorems}

We now collect tools from earlier in the paper to prove the main theorems. 

\begin{proof}[Proof of Theorem \ref{thm:minimization}]  
Let $f\in B^\theta_{2,2}(X)$ such that $\mathcal{E}_\theta(f,f)$ is finite,
and set $\mathcal{K}_f$ to be the collection of all functions $h\in B^\theta_{2,2}(X)$ such that
$h=f$ $\mu_X$-a.e.~in $X\setminus\Omega$. Then $\mathcal{K}_f$ is a convex subset of $B^\theta_{2,2}(X)$. Moreover,
if $(f_k)_k$ is a sequence from $\mathcal{K}_f$ such that $f_k\to f_\infty$ in $L^2(\Omega)$, then
the extension of $f_\infty$ by $f$ to $X\setminus\Omega$, also denoted $f_\infty$, satisfies
$\mathcal{E}_\theta(f,f)\le \liminf_k\mathcal{E}_\theta(f_k,f_k)$. It follows that $f_\infty\in\mathcal{K}_f$ provided the limit inferior is finite.
Let 
\[
I:=\inf\{\mathcal{E}_\theta(h,h)\, :\, h\in\mathcal{K}_f\}.
\]
If $I=0$, then necessarily $f$ is constant on $X\setminus\Omega$, and then the extension of $f$ to $\Omega$ by that constant
yields the desired solution. Hence without loss of generality we may assume that $I>0$.

As $I\le \mathcal{E}_\theta(f,f)$, it follows that $I$ is finite. Let $h_k\in\mathcal{K}_f$ be a corresponding minimizing sequence.
Without loss of generality we may assume that $\mathcal{E}_\theta(h_k,h_k)\le 2I$. It follows that
$\mathcal{E}_\theta(h_k-f,h_k-f)\le 6I+\mathcal{E}_\theta(f,f)<\infty$.
Then as $h_k-f=0$ on $X\setminus\Omega$ and so belongs to $L^2(X)$ as well,
by Proposition~\ref{prop:besovenergy} 
we know that the sequence $(h_k-f)$ is bounded in the Besov seminorm as well;
$\Vert h_k-f\Vert_{B^\theta_{2,2}(X)}^2\le C(6I+\mathcal{E}_\theta(f,f))$. As $f\in B^\theta_{2,2}(X)$, it follows now that
$\Vert h_k\Vert_{B^\theta_{2,2}(X)}^2\le C(6I+\mathcal{E}_\theta(f,f))+\Vert f\Vert_{B^\theta_{2,2}(X)}^2$.

For each positive integer $k$ consider the function $v_k$ on $X\times X$ given by
$v_k(x,w)=h_k(x)-h_k(w)$. Then, equipping $X\times X$ by the measure $\nu$ given by
\[
d\nu(x,w)=\frac{1}{d(x,w)^{2\theta}\mu_X(B(x,d_X(x,w)))}\, d\mu_X(x)\, d\mu_X(w),
\]
we see that $v_k\in L^2(X\times X,\nu)$ is a bounded sequence. Hence there is a function $v_\infty\in L^2(X\times X,\nu)$
such that  (a convex combination of) $v_k\to v_\infty$ in $L^2(X\times X,\nu)$. By passing to a subsequence if necessary,
we may also assume that this convergence is $\nu$-almost everywhere as well (and hence, $\mu_X\times\mu_X$-almost
everywhere as well). Note that if both $x,w\in X\setminus\Omega$, then $v_\infty(x,w)=f(x)-f(w)$. If $x\in\Omega$
and $w\in X\setminus\Omega$ such that $v_\infty(x,w)=\lim_kv_k(x,w)$, then
$v_\infty(x,w)=\lim_kh_k(x)-f(w)$, and so we set $h_\infty$ to be the function on $X$ given by
$h_\infty(x)=f(x)$ when $x\in X\setminus\Omega$ and $h_\infty(x)=v_\infty(x,w)+f(w)$ with $w\in X\setminus\Omega$ as chosen 
above. With the aid of Fubini's theorem, we know that for $\mu_X$-almost every $x\in\Omega$ we can find 
$w\in X\setminus\Omega$
such that $v_\infty(x,w)=\lim_kv_k(x,w)$. Thus $h_\infty$ is well-defined. Moreover, 
the function $X\times X\ni (x,w)\mapsto h_\infty(x)-h_\infty(w)$ is the function $v_\infty$, and so 
$h_\infty\in B^\theta_{2,2}(X)$, with $h_\infty=f$ $\mu_X$-a.e.~in $X\setminus\Omega$; that is, $h_\infty\in\mathcal{K}_f$.
The above argument also shows that $h_k\to h_\infty$ in $L^2_{loc}(X)$ (and indeed, $h_k-h_\infty\to 0$ in 
$L^2(X)$ as $\Omega$ is bounded).
Finally, by the lower semicontinuity and the bilinearity of $\mathcal{E}_\theta$, we see that
\[
 I\le \mathcal{E}_\theta(h_\infty,h_\infty)\le \liminf_k\mathcal{E}_\theta(h_k,h_k)=I,
 \]
 and so $h_\infty$ is the desired solution.
\end{proof}

\begin{proof}[Proof of Theorem \ref{thm:mainthm1}] 
Let $f$ be as in the statement. Then $T\Pi_a f=f$ on $X$ where $\Pi_af$ is as in~\eqref{eq:explicit}. 
We will denote the reflection of $\Pi_af$ along $X$ by setting
$\Pi_af(x,-y):=\Pi_af(x,y)$ for $y>0$. Then, by Theorem~\ref{thm:cheegerharm2}, we know that $\Pi_af$ is Cheeger harmonic
in $U_\Omega$ and therefore is a quasiminimizer in the sense of~\cite{KiSh}. Hence $\Pi_af$ is locally H\"older continuous
on $U_\Omega$ and hence necessarily also on $\Omega\times\{0\}$. As $T\Pi_af=f$, it follows that $f$ is 
locally H\"older continuous on $\Omega$.
\end{proof}

Now we are ready to prove the final main theorem of the paper, Theorem~\ref{thm:main3}.

\begin{proof}[Proof of Theorem~\ref{thm:main3}]
We know from Theorem~\ref{thm:minimization} that given $f\in B^\theta_{2,2}(X)\cap L^2(X)$ that there is a function 
$f_0\in B^\theta_{2,2}(X)\cap L^2(X)$ such that $f_0=f$ in $X\setminus\Omega$ and $f_0$ is a solution to
the problem~\eqref{eq:fract-Laplace}. So let $f$ simply denote such a solution. 

Consider $Hf$, the solution to the Dirichlet problem on $U_\Omega$ with boundary data $f$, as 
constructed in Theorem~\ref{thm:cheegerminimizer}, and the function $\Pi_af$ on $Z$ as in
Theorem~\ref{thm:cheegerharm2} above. Then both $Hf$ and $\Pi_af$ are Cheeger $2$-harmonic in $U_\Omega$
with trace $f$ on $\partial U_\Omega=(X\setminus\Omega)\times\{0\}$, and so by the uniqueness of such
solution as shown in the proof of Theorem~\ref{thm:cheegerminimizer}, we obtain that $\Pi_af=Hf$, and so
$Hf$ is the unique solution to the Dirichlet problem studied. 

The maximum principle follows immediately if $\text{esssup}_{w\in X\setminus\Omega}f(w)=\infty$, and so we may assume
without loss of generality that this supremum is finite. Setting 
\[
M=\text{esssup}_{w\in X\setminus\Omega}f(w),
\]
the Markov property of $\mathcal{E}_\theta$ implies that 
$\mathcal{E}_\theta(\min\{f,M\},\min\{f,M\})\le \mathcal{E}_\theta(f,f)$ with $\min\{f,M\}=f$ 
$\mu_X$-a.e.~on $X\setminus\Omega$;
thus $\min\{f,M\}$ is a solution. By the uniqueness of the solution, it follows that $\min\{f,M\}=f$ on $X$, thus proving
the maximum principle. The strong maximum principle follows from the analogous principle for $Hf$ in $U_\Omega$
upon noting that $(X\setminus\Omega)\times\{0\}=\partial U_\Omega$, see~\cite{KiSh}.
\end{proof}

The three main theorems together demonstrate the existence, uniqueness, and regularity of the solution to 
the Dirichlet problem related to the fractional Laplacian on $X$.

\vskip .5cm

\noindent Author Information:

\vskip .3cm

\noindent Sylvester Eriksson-Bique

\noindent Address: Department of Mathematics, Jyv\"askyl\"a University,  P.O. Box 35 (MaD), FI-40014, Jyv\"askyl\"a, Finland

\noindent Email: {\tt syerikss@jyu.fi}

\vskip .3cm

\noindent Gianmarco Giovannardi

\noindent Address: Departamento de Geometría y Topología,
Universidad de Granada, E-18071 Granada, Spain.

\noindent Email: {\tt giovannardi@ugr.es}

\vskip .3cm

\noindent Riikka Korte

\noindent Address: Department of Mathematics and Systems Analysis, Aalto University, P.O.~Box~11100, FI-00076 Aalto, Finland.

\noindent Email: {\tt riikka.korte@aalto.fi}

\vskip .3cm

\noindent Nageswari Shanmugalingam

\noindent Address: Department of Mathematical Sciences, University of Cincinnati, \\P.O.Box~210025, Cincinnati, OH 45221-0025, USA.

\noindent Email: {\tt shanmun@uc.edu}

\vskip .3cm

\noindent Gareth Speight

\noindent Address: Department of Mathematical Sciences, University of Cincinnati,\\ P.O.Box~210025, Cincinnati, OH 45221-0025, USA.

\noindent Email: {\tt speighgh@ucmail.uc.edu}

\end{document}